\newcommand{\rotatesim}{\rotatebox[origin=c]{90}{$\sim$}}
\theoremstyle{plain}
\newtheorem{thm}{Theorem}[section]
\newtheorem{lem}[thm]{Lemma}
\newtheorem{cor}[thm]{Corollary}
\newtheorem{prop}[thm]{Proposition}
\theoremstyle{definition}
\newtheorem{rem}[thm]{Remark}
\numberwithin{equation}{section}
\def\A{{\mathbb A}}
\def\Q{{\mathbb Q}}
\def\R{{\mathbb R}}
\def\Z{{\mathbb Z}}
\def\C{{\mathbb C}}
\def\P{{\mathbb P}}
\def\B{{\mathbb B}}
\def\SS{{\mathfrak S}}
\def\id{\mathop{\mathrm{id}}\nolimits}
\def\Pic{\mathop{\mathrm{Pic}}\nolimits}
\def\SL{\mathop{\mathrm{SL}}\nolimits}
\def\dim{\mathop{\mathrm{dim}}\nolimits}
\def\div{\mathop{\mathrm{div}}\nolimits}
\def\Stab{\mathop{\mathrm{Stab}}\nolimits}
\def\diag{\mathop{\mathrm{diag}}\nolimits}
\def\antidiag{\mathop{\mathrm{antidiag}}\nolimits}
\def\ss{\mathop{\mathrm{ss}}}
\def\ord{\mathrm{ord}}
\def\GIT{\mathrm{GIT}}
\def\K{\mathrm{K}}
\def\tor{\mathrm{tor}}
\def\BB{\mathrm{BB}}
\def\L{\mathscr{L}}
\def\H{\mathscr{H}}
\def\g{\mathfrak{g}}
\def\M{\mathcal{M}}
\def\OO{\mathscr{O}}
\def\A{\mathcal{A}}
\def\D{\mathscr{D}}
\def\PP{\mathcal{P}}
\def\QQ{\mathcal{Q}}
\def\RR{\mathcal{R}}
\def\a{\alpha}
\def\b{\beta}
\def\g{\gamma}
\def\l{\langle}
\def\r{\rangle}
\def\exp{\mathop{\mathrm{exp}}\nolimits}
\def\SO{\mathop{\mathrm{SO}}\nolimits}
\def\PGL{\mathop{\mathrm{PGL}}\nolimits}
\def\U{\mathrm{U}}
\def\O{\mathrm{O}}
\newcommand{\defeq}{\vcentcolon=}
\begin{document}

\title[Revisiting the moduli space of 8 points on $\mathbb{P}^1$]
{Revisiting the moduli space of 8 points on $\mathbb{P}^1$}

\author{Klaus Hulek}
\address{K.H: Institut f\"ur Algebraische Geometrie, Leibniz University Hannover, Welfengarten 1, 30060 Hannover, Germany}
\email{hulek@math.uni-hannover.de}
\author{Yota Maeda}
\address{Y.M: Department of Mathematics, Faculty of Science, Kyoto University, Kyoto 606-8502, Japan/Advanced Research Laboratory, Research Platform, Sony Group Corporation, 1-7-1 Konan, Minato-ku, Tokyo, 108-0075, Japan}
\email{y.maeda.math@gmail.com}

\date{\today}

\maketitle
\begin{abstract}
The moduli space of $8$ points on $\mathbb{P}^1$, a so-called ancestral Deligne-Mostow space, is, by work of Kond\={o}, also a moduli space of K3 surfaces. 
We prove that the Deligne-Mostow isomorphism does not lift to a morphism  between the  Kirwan blow-up of the GIT quotient  and the unique toroidal compactification of the corresponding ball quotient.
Moreover, we show that these spaces are not $K$-equivalent, even though they are natural blow-ups at the unique cusps and have the same cohomology.
This is analogous to the work of Casalaina-Martin-Grushevsky-Hulek-Laza on the moduli space of cubic surfaces. The moduli spaces of ordinary stable maps, that is the  Fulton-MacPherson compactification of the configuration space of points on $\mathbb{P}^1$,  play an important role in the proof. We further relate our computations to new developments in the minimal model program and recent work of Odaka.
We briefly discuss other cases of  moduli space of points on $\mathbb{P}^1$ where a similar behaviour can be observed, hinting at a more general, but not yet fully understood phenomenon.   
\end{abstract}

\section{Introduction}
It was shown by Casalaina-Martin-Grushevsky-Hulek-Laza \cite{CMGHL24} that the Kirwan blow-up and the toroidal compactification of the moduli space of (non-marked) smooth cubic surfaces are not isomorphic.
In this paper, we prove analogous results for the moduli space of unordered 8 points on $\P^1$, denoted by $\M^{\GIT}$.
The proof we give here is inspired by that of  \cite{CMGHL24}, but requires further ideas.
As we shall discuss in Section \ref{sec:othercases}, the behaviour observed here is shared 
by other ball quotients as well, thus pointing towards a much more general, and yet not fully understood, phenomenon.

The case of 8 points on $\P^1$ is of special interest for more than one reason. One is that it has more than one modular interpretation. Besides being a moduli space of points, it is, by work of Kond\={o}
\cite{Ko07a}, also closely related to moduli of K3 surfaces and automorphic forms. A further reason is that it is 
a so-called {\em ancestral} Deligne-Mostow variety in the sense of the discussion by Gallardo-Kerr-Schaffler \cite{GKS21}. This means that any Deligne-Mostow variety over the Gaussian integers
with arithmetic monodromy group, and which has cusps, can be embedded into this ball quotient. The other ancestral case is that of 12 points on $\P^1$, which plays the same role for the Eisenstein integers. In this paper, 
we shall concentrate on the Gaussian case and only briefly discuss the Eisenstein case, which is the contents of the subsequent paper  \cite{HKM24}.

\subsection{Main results}
The Deligne-Mostow theory \cite{DM86} gives us an isomorphism between $\M^{\GIT}$ and the Baily-Borel compactification of an appropriate $5$-dimensional ball quotient $\overline{\B^5/\Gamma}^{\BB}$.
We are interested in the lifting of the Deligne-Mostow isomorphism to the unique toroidal compactification.
There exist two natural blow-ups, playing important roles here: the Kirwan blow-up $f:\M^{\K}\to\M^{\GIT}$ and the toroidal compactification $\pi:\overline{\B^5/\Gamma}^{\tor}\to\overline{\B^5/\Gamma}^{\BB}$.
Here, the Kirwan blow-up $\M^{\K}$ is the partial desingularisation of $\M^{\GIT}$ whose centre is located in the polystable orbits (which is a unique point  $\{c_{4,4}\}$ in our case).
The toroidal compactification $\overline{\B^5/\Gamma}^{\tor}$ is a blow-up of $\overline{\B^5/\Gamma}^{\BB}$ at the point $\{\xi\}$, which is the unique cusp, i.e., the Baily-Borel boundary.
The above Deligne-Mostow isomoropshim sends $c_{4,4}$ to $\xi$, thus restricting to an isomorphism $\M^{\K}\setminus f^{-1}(c_{4,4})\cong \overline{\B^5/\Gamma}^{\tor}\setminus\pi^{-1}(\xi)$.
In this setting, our first main result asserts that the birational map $g:\M^{\K}\dashrightarrow\overline{\B^5/\Gamma}^{\tor}$ does not extend to a morphism.
\[\begin{tikzcd}
  \M^{\K} \ar[r, dotted, "g"] \arrow[d, "f"] & \overline{\B^5/\Gamma}^{\tor} \arrow[d, "\pi"] \\
  \M^{\GIT} \arrow[r, "\phi"] & \overline{\B^5/\Gamma}^{\BB}.
\end{tikzcd}\]

\begin{thm}[{Theorem \ref{thm:not_extend}}, Remark \ref{rem:newproof13}]
\label{mainthm:extendability}
Neither the Deligne-Mostow isomorphism $\phi:\M^{\GIT}\to\overline{\B^5/\Gamma}^{\BB}$ nor its inverse $\phi^{-1}$ lift to a morphism between the Kirwan blow-up $\M^{\K}$ and the unique toroidal compactification 
$\overline{\B^5/\Gamma}^{\tor}$.
\end{thm}

This result still leaves the possibility open that the Kirwan blow-up and the toroidal compactification are isomorphic as abstract varieties. 
One obstruction to this could be that the varieties are topologically different. Indeed, the topology of these varieties is of independent interest
(and indeed this was the starting point of \cite{CMGHL23} and  \cite{CMGHL24} in the case of cubic threefolds and cubic surfaces).
We compute the cohomology of these varieties, according to the Kirwan method \cite{Ki84, Ki85, Ki89} and Casalaina-Martin-Grushevsky-Hulek-Laza \cite{CMGHL23}.
Wherever a space $X$ has at most finite quotient singularities, we work with singular cohomology with rational coefficients and denote this by $H^k(X)$. In the other cases, notably the GIT quotient and 
the Baily-Borel compactification of ball quotients, we work with intersection cohomology (of middle perversity) and denote this by $IH^k(X)$. Note that for spaces with finite quotient singularities
singular cohomology and intersection cohomology coincide.  
The cohomology groups of the varieties under consideration are given as follows.
\begin{thm}[{Theorem \ref{thm:coh_previous_work}, \ref{thm:coh_ordered_tor}, \ref{thm:coh_M^K}, \ref{thm:coh_tor}}]
All the odd degree cohomology of the following projective varieties vanishes.
In even degrees, their Betti numbers are given by:
\begin{align*}
\renewcommand*{\arraystretch}{1.2}
\begin{array}{l|cccccc}
\hskip2cm j&0&2&4&6&8&10\\\hline
\dim H^j(\M^{\K})&1&2&3&3&2&1\\
\dim IH^j(\overline{\B^5/\Gamma}^{\BB})&1&1&2&2&1&1\\
\dim H^j(\overline{\B^5/\Gamma}^{\tor})&1&2&3&3&2&1\\
\dim H^j(\M_{\ord}^{\K})&1&43&99&99&43&1\\
\dim IH^j(\overline{\B^5/\Gamma_{\ord}}^{\BB})&1&8&29&29&8&1\\
\dim IH^j(\overline{\B^5/\Gamma_{\ord}}^{\tor})&1&43&99&99&43&1
\end{array}
\end{align*}
thus, all the Betti numbers of $\M^{\K}$ and $\overline{\B^5/\Gamma}^{\tor}$ are the same.
\end{thm}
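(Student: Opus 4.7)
The plan is to establish the six rows of the table by four distinct but interlocking arguments, one for each geometric type of space, and then to extract the unordered cases from the ordered ones by taking $S_8$-invariants whenever this provides a useful cross-check.

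For the Kirwan blow-ups $\M^{\K}$ and $\M_{\ord}^{\K}$, I would apply Kirwan's equivariant method \cite{Ki84, Ki85, Ki89}. One begins with the $\SL_2$-equivariant cohomology of the semistable locus $((\P^1)^8)^{\ss}$, computed in closed form from the Atiyah-Bott-Kirwan formula applied to $(\P^1)^8$ together with the Morse-theoretic description of the unstable stratification. The strictly polystable locus, which is what gets resolved by the Kirwan blow-up, consists in the unordered case of the single orbit $\{c_{4,4}\}$ with stabiliser the normaliser of a maximal torus of $\SL_2$, and in the ordered case of $\binom{8}{4}/2 = 35$ polystable orbits indexed by unordered partitions of $\{1,\dots,8\}$ into two sets of four. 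Kirwan's blow-up formula then expresses $P(\M_{\ord}^{\K},t)$ as the equivariant Poincaré series of the stable locus plus explicit correction terms coming from the resolutions of these orbits; the unordered case is either done directly or recovered from the ordered one by extracting $S_8$-invariants, which provides a robust internal consistency check.

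For the two Baily-Borel compactifications I would bypass a direct $L^2$-cohomology computation in favour of the Deligne-Mostow isomorphism $\phi \colon \M^{\GIT} \xrightarrow{\sim} \overline{\B^5/\Gamma}^{\BB}$, which identifies the right-hand intersection cohomology with $IH^*(\M^{\GIT})$. Since $f\colon \M^{\K}\to\M^{\GIT}$ is a single Kirwan blow-up centred at $c_{4,4}$, the decomposition theorem expresses $H^*(\M^{\K})$ as $IH^*(\M^{\GIT})$ plus a contribution supported on the exceptional locus, which is identified explicitly as a quotient of a weighted projective space by the component group of the stabiliser. Subtracting this well-understood exceptional contribution from the first row yields the second row of the table, and the same procedure applied to $\M_{\ord}^{\K}\to\M_{\ord}^{\GIT}$ produces the fifth row.

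For the toroidal compactifications, the map $\pi\colon \overline{\B^5/\Gamma}^{\tor}\to\overline{\B^5/\Gamma}^{\BB}$ is a single blow-up at the unique cusp $\xi$, whose exceptional divisor $E$ is a finite quotient of a torus bundle over a lower-dimensional abelian variety associated to the cusp data. Applying the decomposition theorem to $\pi$, or equivalently the standard long exact sequence for a blow-up expressed in terms of intersection cohomology, one writes $H^*(\overline{\B^5/\Gamma}^{\tor})$ as $IH^*(\overline{\B^5/\Gamma}^{\BB})$ plus the cohomology of $E$ placed in the appropriate shifted range; explicit knowledge of $E$ and of its ordered cover then gives the third and sixth rows of the table. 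The most delicate step will be the Kirwan calculation for $\M_{\ord}^{\K}$, where the combinatorics of the $35$ polystable strata and their stabilisers is substantial and any error in multiplicities would propagate under $S_8$-averaging into the $\M^{\K}$ row; the strong global sanity check is precisely the predicted coincidence $\dim H^j(\M^{\K}) = \dim H^j(\overline{\B^5/\Gamma}^{\tor})$, which must emerge independently from the Baily-Borel/toroidal side, and whose agreement is exactly what enables the subsequent discussion of $K$-equivalence in the paper.
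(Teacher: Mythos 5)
Your plan is essentially the paper's: Kirwan's equivariant blow-up formalism (cohomology of the semistable locus plus main and extra correction terms) for the Kirwan blow-ups, the Deligne--Mostow isomorphism to transport the $IH^*$ computations to the GIT side, and the decomposition theorem over the cusps for the toroidal compactifications with the explicit identification of the boundary (as $\P^2\times\P^2$ in the ordered case and $(\P^2/\SS_4)^2/\SS_2$ in the unordered case). The only real difference in execution is that the paper simply cites Kirwan--Lee--Weintraub \cite{KLW87} and Kirwan \cite{Ki89} for the rows $H^j(\M_{\ord}^{\K})$, $IH^j(\overline{\B^5/\Gamma_{\ord}}^{\BB})$ and $IH^j(\overline{\B^5/\Gamma}^{\BB})$, and runs the Kirwan machinery only once, for $\M^{\K}$, working mod $t^6$ and then closing the table with Poincar\'e duality.

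There is, however, a conceptual misstep in your proposed $\SS_8$-averaging ``consistency check'' for the first row. Extracting $\SS_8$-invariants from $H^*(\M_{\ord}^{\K})$ computes $H^*(\M_{\ord}^{\K}/\SS_8)$, and since $\M_{\ord}^{\K}\cong\overline{\B^5/\Gamma_{\ord}}^{\tor}$ is $\SS_8$-equivariant over the Baily--Borel side one has $\M_{\ord}^{\K}/\SS_8\cong\overline{\B^5/\Gamma}^{\tor}$, \emph{not} $\M^{\K}$; indeed the paper observes explicitly at the end of Section~3 that $\M_{\ord}^{\K}/\SS_8\not\cong\M^{\K}$ (Kirwan blow-up does not commute with the finite quotient). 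The equality of Betti numbers that you would observe between the $\SS_8$-averaged ordered row and your direct Kirwan computation of $\M^{\K}$ is precisely the coincidence that the theorem establishes --- it is the output, not an independent check. So you must compute $\M^{\K}$ directly via Kirwan's method, as you also allow for; the averaging would silently compute the third row of the table rather than the first, and its agreement with the first row is nontrivial information, not a sanity check on your multiplicities.
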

Here, $\overline{\B^5/\Gamma_{\ord}}^{\BB}$ denotes the Baily-Borel compactification of a $5$-dimensional ball quotient, which is 
 an $\SS_8$-cover of $\overline{\B^5/\Gamma}^{\BB}$ and isomorphic to $\M^{\GIT}_{\ord}$, the moduli space of ordered 8 points on $\P^1$.
Also, we denote by $\M_{\ord}^{\K}$ the Kirwan blow-up of $\M^{\GIT}_{\ord}$ and by $\overline{\B^5/\Gamma_{\ord}}^{\tor}$ the toroidal blow-up of $\overline{\B^5/\Gamma_{\ord}}^{\BB}$.
For more precise descriptions of these varieties,  as well as the bounded symmetric domain and arithmetic subgroups, see Section \ref{section:preparation}.

Again, this result leaves the possibility that $\M^{\K}$ and $\overline{\B^5/\Gamma}^{\tor}$ are isomorphic as abstract varieties. We  rule this out by showing 
that these spaces are not $K$-equivalent. Recall that  two projective normal $\Q$-Gorenstein varieties $X$ and $Y$ are called \textit{$K$-equivalent} if there  is a common
resolution of singularities  $Z$ dominating $X$ and $Y$ birationally
\begin{equation*}
\xymatrix{
&Z\ar[ld]_{f_X} \ar[rd]^{f_Y}&\\
X\ar@{<-->}[rr]&&Y
}
\end{equation*}
such that $f_X^*K_X \sim_{\Q} f_Y^*K_Y$. 
For $K$-equivalent varieties, the top intersection numbers are equal: $K_X^n=K_Y^n$, where $n$ is the dimension of $X$ and $Y$. We shall use this property to show that 
$\M^{\K}$ and $\overline{\B^5/\Gamma}^{\tor}$ are not $K$-equivalent.  
Thus, these varieties are in particular not isomorphic as abstract varieties,  even though they are the blow-ups at the same points of $\M^{\GIT}\ \cong \overline{\B^5/\Gamma}^{\BB}$ and 
have the same Betti numbers.

\begin{thm}[{Theorem \ref{thm:not_K_equiv}}]
\label{mainthm:not_K_equiv}
The Kirwan blow-up $\M^{\K}$ and the toroidal compactification $\overline{\B^5/\Gamma}^{\tor}$ are not $K$-equivalent and hence, in particular, not isomorphic as abstract varieties.
\end{thm}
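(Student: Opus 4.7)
The plan is to exploit the fact, recalled just above the statement, that two $K$-equivalent projective $\Q$-Gorenstein varieties of the same dimension $n$ have equal top self-intersection number $K^n$. Here $n=5$, so it suffices to compute $K_{\M^{\K}}^5$ and $K_{\overline{\B^5/\Gamma}^{\tor}}^5$ separately and exhibit them as different rational numbers; this immediately rules out $K$-equivalence and, a fortiori, any abstract isomorphism between $\M^{\K}$ and $\overline{\B^5/\Gamma}^{\tor}$.

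The first step is to write each canonical divisor as a pullback from the common ``base'' plus an exceptional contribution. On the toroidal side, Mumford's extension of Chern--Weil theory to locally symmetric varieties gives
\[K_{\overline{\B^5/\Gamma}^{\tor}} \;=\; 6\,\pi^*\lambda - T,\]
where $\lambda$ denotes the Baily--Borel $\Q$-line bundle on $\overline{\B^5/\Gamma}^{\BB}$ and $T$ is the reduced toroidal boundary divisor over the unique cusp $\xi$. On the Kirwan side, the standard description of the partial desingularisation gives
\[K_{\M^{\K}} \;=\; f^*K_{\M^{\GIT}} + a E,\]
where $E=f^{-1}(c_{4,4})$ and $a\in\Q$ is a discrepancy determined by the action of the stabiliser $G_{4,4}$ on the tangent space to a Luna slice at $c_{4,4}$. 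Under the Deligne--Mostow isomorphism $\phi$ one has $\phi^*K_{\overline{\B^5/\Gamma}^{\BB}}=K_{\M^{\GIT}}$ as $\Q$-Cartier divisors, so the two ``base'' pieces will be identified.

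Because $E$ contracts to the single point $c_{4,4}$ and $T$ contracts to the single point $\xi$, the projection formula forces all mixed terms in the binomial expansions of $K^5$ to vanish: for $1\le k\le 4$, $f_*(E^{5-k})$ is a cycle of positive dimension $k$ supported on a point (hence zero), and the same argument applies on the toroidal side. The expansions therefore collapse to
\[K_{\M^{\K}}^5 \;=\; K_{\M^{\GIT}}^5 + a^5 E^5, \qquad K_{\overline{\B^5/\Gamma}^{\tor}}^5 \;=\; 6^5\lambda^5 - T^5,\]
and the first summands agree under $\phi$. The theorem is thus reduced to the numerical inequality $a^5 E^5 \neq -T^5$.

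Both boundary contributions should then be read off from the local models set up earlier in the paper. The toroidal contribution $T^5 = c_1(N_{T/\overline{\B^5/\Gamma}^{\tor}})^4\cdot[T]$ is a self-intersection on the abelian fourfold forming the component of $T$ over $\xi$, whose polarisation is determined by the lattice data of the unique rank-one cusp; this makes $T^5$ a concrete rational (in fact integer, up to the orbifold convention) number. The Kirwan contribution $a^5 E^5$, and the discrepancy $a$ itself, can be computed from the weights of $G_{4,4}\cong \C^*\rtimes\Z/2$ on the five-dimensional slice at $c_{4,4}$, using the identification of $E$ as the GIT quotient of the projectivised slice by $G_{4,4}$ (essentially a weighted-projective/equivariant cohomology calculation, with an additional $\Z/2$-averaging). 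The step I expect to be the main obstacle is carrying out these two equivariant and arithmetic computations cleanly and then verifying that the resulting rational numbers do not cancel: there is no a priori reason for $a^5 E^5 + T^5$ to vanish, and the explicit evaluation in each local model should confirm the non-vanishing, which concludes the argument.
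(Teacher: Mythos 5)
Your high-level strategy -- comparing $K^5$ by decomposing each canonical divisor as a pullback from the common base plus an exceptional term, observing the mixed terms in the binomial expansion vanish, and reducing to a numerical comparison of the exceptional contributions -- is exactly the right framework and matches the paper's setup. However, there is a genuine gap at precisely the step you flag as ``the main obstacle,'' and the paper handles it in a way you have not anticipated.

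First, two errors in the concrete formulas. Your formula $K_{\overline{\B^5/\Gamma}^{\tor}} = 6\pi^*\lambda - T$ is the naive Hirzebruch--Mumford expression for a neat group; here it must be corrected for the ramification of $\B^5 \to \B^5/\Gamma$ along the discriminant divisor $H$ (index $2$ at both levels $\Gamma/\Gamma_{\ord}$ and $\Gamma_{\ord}$, see Remark~\ref{rem:ram}), and then converted using Kond\={o}'s weight-$14$ modular form. The result is $K_{\overline{\B^5/\Gamma}^{\tor}} = \pi^*K_{\overline{\B^5/\Gamma}^{\BB}} + 7T$, so the coefficient is $+7$, not $-1$. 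Second, the toroidal boundary $T$ is not ``an abelian fourfold'' but, by Lemma~\ref{lem:boundary}, the finite quotient $(\P^2/\SS_4 \times \P^2/\SS_4)/\SS_2$ of one; the paper computes $T^5 = 1/192$ by working in the ordered cover where $T_{\ord,i} \cong \P^2\times\P^2$ has normal bundle $\OO(-1,-1)$ (Proposition~\ref{prop:normal_bundles}), giving $T_{\ord,i}^5 = 6$, hence $T_{\ord}^5 = 210$ and $T^5 = 210/8!$.

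The essential gap is that you leave $\Delta^5$ (your $E^5$) as an explicit weighted-projective/localization computation to be ``carried out cleanly,'' and you offer no mechanism guaranteeing the two numbers differ beyond the remark that ``there is no a priori reason for them to cancel.'' The paper does not compute $\Delta^5$ at all. Instead it uses an arithmetic obstruction: by a general result for Kirwan blow-ups ([CMGHL22, Proposition~6.10]), $\Delta^5 \in \frac{1}{e}\Z$ where $e$ is the least common multiple of the orders of the finite stabilizers $S_x$ for $x\in\Delta$; and Proposition~\ref{prop:e_not_divisible_by_5}, proved by a case-by-case Luna-slice analysis on the affine charts $\PP$, $\QQ$, $\RR$, shows that no such stabilizer has order divisible by $5$. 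Consequently the denominator of $\Delta^5$ is coprime to $5$. But equality $(5\Delta)^5 = (7T)^5 = 7^5/192$ would force $\Delta^5 = 7^5/(5^5\cdot 192)$, whose denominator is divisible by $5^5$. This contradiction is the actual engine of the proof, and without it (or a full explicit evaluation of $\Delta^5$, which you do not carry out) the argument is incomplete.
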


\begin{rem}
\label{rem:odaka}
We can interpret Theorems \ref{mainthm:extendability} and \ref{mainthm:not_K_equiv} in the context of the  minimal model program and semi-toric compactifications.
This gives, in particular, an independent proof of one direction of Theorem \ref{mainthm:extendability} using the characterization of (semi-)toric compactifications. This proof does not require the calculation of the second Betti number of these spaces, but it uses the Luna slice calculations. We refer the reader to Subsection  \ref{subsection:mmp} for more details.  
\end{rem}

 As we shall see later, the situation is in contrast to the case of the moduli space of ordered points, where we have an isomorphism 
$\M_{\ord}^{\K} \cong \overline{\B^5/\Gamma_{\ord}}^{\tor}$.

\begin{rem}
    \label{rem:Rapoport}
    Kudla-Rapoport \cite{KR12} studied the descent problem of Deligne-Mostow isomorphisms and ball quotients at the level of moduli stacks over the natural field of definitions.
    In particular, in the case of 12 points, they gave an interpretation as a DM-stack parameterizing abelian varieties, showing the Deligne-Mostow isomorphism comes from a morphism between DM-stacks and its image is a complement of Kudla-Rapoport cycles \cite[Theorem 8.1]{KR12}.
   It seems interesting to ask whether a similar result holds in our (and similar) situations.
\end{rem}

\subsection{Outline of the proof of Theorem \ref{mainthm:extendability}}
The strategy of the proof of Theorem \ref{mainthm:extendability} is as follows.
As in \cite{CMGHL24} the argument is divided into two steps.
We first prove that the discriminant divisor and the boundary divisor intersect non-transversally in the Kirwan blow-up.
This is done in terms of a local computation by using the Luna slice. Secondly, we show that the corresponding divisors intersect generically transversally in the 
toroidal compactification of  the 5-dimensional ball quotient.
Here is a major difference to \cite{CMGHL24}.
This is because we cannot use Naruki's compactification.
Instead, we work on a  sequence of blow-ups of the Baily-Borel compactification of the 5-dimensional ball quotient.
This was studied in detail in \cite{GKS21, KM11} and can be described in terms of moduli spaces of weighted pointed stable curves \cite{Ha03}.
The discriminant divisor and boundary divisor exist as normal crossing divisors in these spaces, thus we can use this to prove the generic transversality of the divisors in the toroidal compactification.

\subsection{Organization of the paper}
In Section \ref{section:preparation}, we describe the relationship between GIT quotients and ball quotients.
In Section \ref{section:extendability}, we prove Theorem \ref{mainthm:extendability} through local computations.
In Section \ref{section:canonical_bundles}, we compute the top self-intersection number of canonical bundles, deduce Theorem \ref{mainthm:not_K_equiv} and discuss the relation to the minimal model program.
In Section \ref{sec:Bettinumbers}, we compute the cohomology by using the Kirwan method.
In Section \ref{sec:othercases}, we will briefly discuss other Deligne-Mostow varieties.

\subsection*{Acknowledgements}
The authors wish to express their thanks to Sebastian  Casalaina-Martin and Sigeyuki Kond\={o} for helpful discussions.
We would also like to thank Yuji Odaka for valuable information regarding semi-toric compactifications concerning Subsection \ref{subsection:mmp}.
We are grateful to Michael Rapoport for pointing out the connection with his work with Kudla about occult period maps.
The second author would like to thank Masafumi Hattori and Takuya Yamauchi for their comments, and Kanazawa University and Leibniz University Hannover for their hospitality.
The first author is partially supported by the DFG grant Hu 337/7-2 and the second author is supported by JST ACT-X JPMJAX200P.

\section{GIT and ball quotients}
\label{section:preparation}

Below, we consider the moduli spaces of \textit{ordered} and \textit{unordered} 8 points on $\P^1$.
Throughout this paper, the phrase ``8 points on $\P^1$" will always  mean ``unordered 8 points on $\P^1$" for simplicity.
Let
\[\M_{\ord}^{\GIT}\defeq (\P^1)^8//\SL_2(\C),\quad \M^{\GIT}\defeq \P^8//\SL_2(\C).\]
Here, the GIT quotients are taken with respect to the symmetric linearisation $\OO(1,\cdots, 1)$ and $\OO(1)$. We also note, see \cite[Theorem 1.1]{KM11}, that  
\[
\M_{\ord}^{\GIT}/\SS_8 \cong  \M^{\GIT}.
\]
We denote by $\varphi_1:\M_{\ord}^{\K}\to\M_{\ord}^{\GIT}$ and $f:\M^{\K}\to\M^{\GIT}$ the Kirwan blow-ups \cite{Ki85}.

 As in \cite{Ko07a}, 
 we consider the free  $\Z[\sqrt{-1}]$-module of rank $2$ equipped with the Hermitian forms defined by the following matrices
 \[\begin{pmatrix}
0 & 1+\sqrt{-1} \\
1-\sqrt{-1} & 0 \\
\end{pmatrix},\quad
 \begin{pmatrix}
-2 & 1-\sqrt{-1} \\
1+\sqrt{-1} & -2 \\
\end{pmatrix}.
\]
isomorphic to $U \oplus U(2)$ and $D_4(-1)$, respectively, 
where $U$ denotes the hyperbolic plane, $U(2)$ is the hyperbolic plane where the form has been 
multiplied by $2$ and $D_4(-1)$ is the negative $D_4$-lattice. By abuse of notation, we will also denote the Hermitian lattices by these symbols.

Here, let $L\defeq U\oplus U(2)\oplus D_4(-1)^{\oplus 2}$ be the Hermitian lattice of signature $(1,5)$ over $\Z[\sqrt{-1}]$, defined by the above Hermitian forms.
Let $\U(L)$ be the unitary group scheme over $\Z$ preserving the lattice $L$, and $\Gamma\defeq\U(L)(\Z)$.
Now, there is the Hermitian symmetric domain $\B^5$ associated with the reductive group $\U(L)(\R)\cong \U(1,5)$ defined by 
\[\B^5\defeq\{v\in L\otimes_{\Z[\sqrt{-1}]}\C\mid \l v,v\r>0 \}/\C^{\times}\]
which is isomorphic to the $5$-dimensional complex ball.
Let $L^{\vee}$ be the \textit{dual lattice} of $L$, which contains $L$ as a finite $\Z[\sqrt{-1}]$-module, and $A_L\defeq L^{\vee}/L$ be the \textit{discriminant group}, isomorphic to $\left(\Z[\sqrt{-1}]/(1+\sqrt{-1})\Z[\sqrt{-1}]\right)^6$ in this situation. 
Now, let us introduce an important arithmetic subgroup $\Gamma_{\ord}\subset\Gamma$, which is called the \textit{discriminant kernel}:
\[\Gamma_{\ord}\defeq\{g\in\Gamma\mid g(v)\equiv v\bmod L\ (\forall v\in A_L)\}.\]
This data gives us the notion of the \textit{ball quotients} 
$\B^5/\Gamma_{\ord}$ and $\B^5/\Gamma$
which are quasi-projective varieties over $\C$.
We denote by $\overline{\B^5/\Gamma_{\ord}}^{\BB}$ and  $\overline{\B^5/\Gamma}^{\BB}$ (resp. $\overline{\B^5/\Gamma_{\ord}}^{\tor}$ and $\overline{\B^5/\Gamma}^{\tor}$) the Baily-Borel compactifications (resp. toroidal compactifications) of the corresponding ball quotients.
Note that the toroidal compactifications of ball quotients are canonical as there is no choice of a fan involved.
Further, let 
\[H\defeq\bigcup_{\l\ell,\ell\r=-2} H(\ell)\]
be the discriminant divisor where 
\[H(\ell)= \{v\in\B^5\mid\l v,\ell\r=0\}\]
is the special divisor
with respect to a root $\ell\in L$, see \cite[Subsection 3.4]{Ko07a}.

Next, we describe the stable, semi-stable and polystable loci on $\M_{\ord}^{\GIT}$ and $\M^{\GIT}$. This goes back to very classical results of GIT, in fact 
Mumford's seminal work, see  \cite[Chapter 4, \S 2]{MFK94}. 
In our cases, this is spelled out as follows. In the ordered case, 8 points define a stable (resp. semi-stable) GIT-point if and only if no 4 points (resp. 5 points) coincide, see also
\cite[Subsection 4.4]{Ko07a} or \cite[Example 2, p31]{Do88}.
Polystable points (that is, strictly semi-stable points whose orbit is closed) correspond to the points $(4,4)$, which means that we have two different points, each with multiplicity $4$; 
for the notation, see \cite[Subsection 4.4]{Ko07a}.
In the unordered case, stable, semi-stable and polystable points are described in the same way as above, see also \cite[Subsection 7.2 (c)]{Mu03}. 

A crucial result of  Kond\={o},  \cite[Theorem 4.6]{Ko07a}, says that there are $\SS_8$-equivariant isomorphisms
\begin{align*}
    \phi_{\ord} &:\M^{\GIT}_{\ord}\xrightarrow{\sim} \overline{\B^5/\Gamma_{\ord}}^{\BB}\\
    \phi &:\M^{\GIT} \xrightarrow{\sim} \overline{\B^5/\Gamma}^{\BB},
\end{align*}
where the second isomorphism goes back to \cite{DM86}. 

These isomorphisms also allow us to describe the subloci of $8$-tuples consisting of different points, the discriminant locus of stable, but not distinct, 8-tuples and the properly polystable loci.
For this, let $(\M^{\GIT}_{\ord})^o\subset \M^{\GIT}_{\ord}$ (resp. $(\M^{\GIT})^o\subset \M^{\GIT}$) 
be the moduli space of distinct ordered 8 points on $\P^1$ (resp. the moduli space of distinct 8 points on $\P^1$).
By \cite[Theorem 3.3]{Ko07a}, the morphisms $  \phi_{\ord}$ and $\phi $ restrict to isomorphisms:
\begin{align*}
    \phi_{\ord}\vert_{(\M^{\GIT}_{\ord})^o}&:(\M^{\GIT}_{\ord})^o\  \xrightarrow{\sim} (\B^5\setminus H)/\Gamma_{\ord}\\
    \phi\vert_{(\M^{\GIT})^o}&:(\M^{\GIT})^o \xrightarrow{\sim} (\B^5\setminus H)/\Gamma.
\end{align*}

Also the isomorphisms   $\phi_{\ord}$ and $\phi$ identify the discriminant locus of stable, but not distinct 8 points on $\M_{\ord}^{\GIT}$ and $\M^{\GIT}$ with $H/\Gamma_{\ord}$ and 
$H/\Gamma$  respectively. 
It turns out that the discriminant divisor $H/\Gamma_{\ord}$ has 28 irreducible components, whereas $H/\Gamma$ is irreducible.
See also \cite[Subsection 4.2]{Ko07a}, asserting that $A_L$ contains 64 vectors: 1 zero vector, 35 isotropic vectors and 28 non-isotropic vectors.

Finally, the properly polystable points are identified with the cusps of the Borel compactification, namely 
 $\overline{(\B^5/\Gamma_{\ord}})^{\BB}\setminus(\B^5/\Gamma_{\ord})$ and $\overline{(\B^5/\Gamma})^{\BB}\setminus(\B^5/\Gamma$)) respectively.
There are 35 cusps on $\overline{\B^5/\Gamma_{\ord}}^{\BB}$ (also corresponding to the 35 isotropic vectors in $A_L$), but $\overline{\B^5/\Gamma}^{\BB}$ has a unique cusp.
This directly follows from \cite[Subsection 4.2, Proposition 4.4]{Ko07a}, but we will see this in detail when we study the blow-up sequences.

The moduli spaces under consideration are also closely related to moduli spaces of stable curves. We do not repeat all details of the general theory here, but 
recall some notions as they are relevant for our purposes.
Let $\overline{\M}_{0,8(\frac{1}{4}+\epsilon)}$ be the smooth projective  variety which is the coarse moduli space representing the moduli problem of weighted pointed stable curves of type $(0,8(\frac{1}{4}+\epsilon))$ with $0<\epsilon \ll 1$ in the sense of \cite[Theorem 2.1]{Ha03} or \cite[Definition 2.1, Theorem 2.2]{KM11}, see also \cite[Lemma 2.3, Remark 2.4, Remark 2.11, Example 2.12]{GKS21}.
This is also realized as the KSBA compactification \cite[Subsection 3.2]{GKS21}.
$\overline{\M}_{0,8}$ is defined in the same way, but in this case, this is exactly the GIT quotient of $\P^1[8]$, the Fulton-MacPherson compactification of the configuration space of 8 points on $\P^1$ \cite{FM94}, by $\SL_2$; see also \cite[p55]{MM07}.
More generally, this is interpreted as the wonderful compactification \cite[p536, Subsection 4.2]{Li09} (or the Deligne-Mumford compactification \cite[Remark 2.9]{GKS21}).

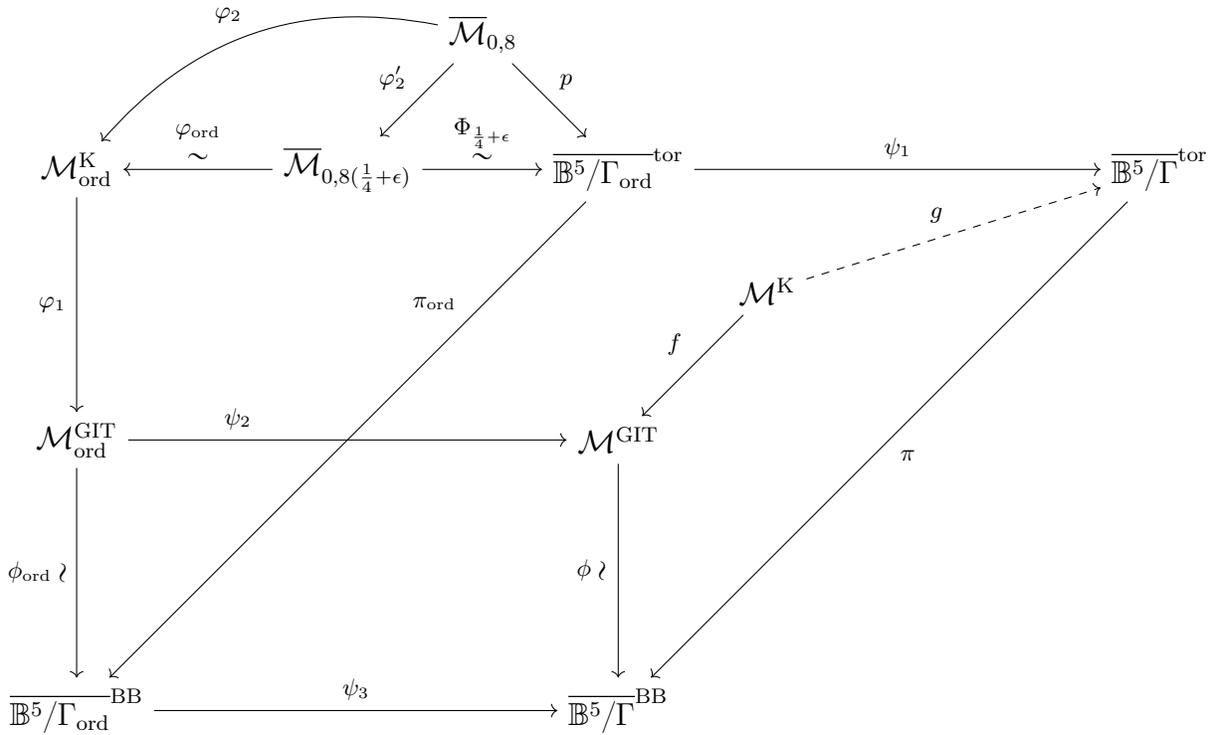
\begin{figure}[h]
\centering
\[
  \begin{tikzpicture}[
    labelsize/.style={font=\scriptsize},
    isolabelsize/.style={font=\normalsize},
    hom/.style={->,auto,labelsize},
    scale=0.9,
  ]
  
  % nodes
  \node(K) at (0,8) {$\M_{\ord}^{\K}$};
  \node(Qu) at (4,8) {$\overline{\M}_{0,8(\frac{1}{4}+\epsilon)}$};
  \node(ctor) at (8,8) {$\overline{\B^5/\Gamma_{\ord}}^{\tor}$}; % left tor
  \node(rtor) at (16,8) {$\overline{\B^5/\Gamma}^{\tor}$}; % right tor
  \node(git) at (0,4) {$\M^{\GIT}_{\ord}$};
  \node(gitl) at (8,4) {$\M^{\GIT}$};
  \node(lbb) at (0,0) {$\overline{\B^5/\Gamma_{\ord}}^{\BB}$}; % left bb
  \node(cbb) at (8,0) {$\overline{\B^5/\Gamma}^{\BB}$}; % right bb
  \node(k) at (10.2,6.2) {$\M^{\K}$};
  \node(s) at (6,10) {$\overline{\M}_{0,8}$};
  
  % isomorphisms
  \draw[hom,swap] (Qu) to node[align=center,yshift=-2pt]{
    {\scriptsize $\varphi_{\ord}$}\\
    {\normalsize $\sim$}
  } (K);
  \draw[hom] (Qu) to node[align=center,yshift=-2pt]{
    {\scriptsize $\Phi_{\frac{1}{4}+\epsilon}$}\\
    {\normalsize $\sim$}
  } (ctor);
  \draw[hom,swap] (git) to  node[align=center,xshift=5pt]{
    {\scriptsize $\phi_{\ord}$}
    {\normalsize$\rotatesim$}
  } (lbb);
  \draw[hom,swap] (gitl) to node[align=center,xshift=5pt]{
    {\scriptsize $\phi$}
    \normalsize$\rotatesim$
  } (cbb);
  
  % morphisms
  % horizontal
  \draw[hom] (ctor) to node{$\psi_1$} (rtor);
  \draw[hom,near start] (git) to node{$\psi_2$} (gitl);
  \draw[hom] (lbb) to node{$\psi_3$} (cbb);
  % virtical
  \draw[hom,swap] (K) to node{$\varphi_1$} (git);
  % from s
  \draw[hom,swap] (s) to node{$\varphi_2'$} (Qu);
  \draw[hom] (s) to node{$p$} (ctor);
  % from k
  \draw[hom,swap] (k) to node{$f$} (gitl);
  \draw[hom,dashed] (k) to node{$g$} (rtor);
  % others
  \draw[hom,near start,swap] (ctor) to node{$\pi_{\ord}$} (lbb);
  \draw[hom] (rtor) to node{$\pi$} (cbb);
  \draw[hom,bend right,swap](s) to node{$\varphi_2$}(K);
  
  \end{tikzpicture}
\]
\caption{Relationship between several compactifications}
\label{com_diag}
\end{figure}

We describe the relation of these spaces in Figure \ref{com_diag}.

\begin{enumerate}
    \item $\psi_i$ is a morphism by the above discussion about stable conditions for $i=1,2,3$.
    \item $\phi$ is an isomorphism \cite{DM86}.
    \item $\phi_{\ord}$ is an $\SS_8$-equivalent isomorphism \cite[Theorem 4.6]{Ko07a}.
    \item $\varphi_{\ord}$ is an isomorphism \cite[Theorem 1.1]{KM11}.
    \item $\Phi_{\frac{1}{4}+\epsilon}$ is an isomorphism \cite[Theorem 1.1]{GKS21}.
    \item $p$ is a morphism \cite[Proposition 2.13]{GKS21}.
    \item The blow-up sequences $\varphi_1, \varphi_2$ are considered in \cite[Theorem 4.1 (i), (iii)]{KM11}.
    About the contraction of divisors of these morphisms, see \cite[Proposition 4.5]{Ha03} or \cite[p1121]{KM11}.
    We study these morphisms in detail in Subsection \ref{transversality}.
    \item $\overline{\M}_{0,8}$ is a normal crossing compactification of $(\B^5\setminus H)/\Gamma_{\ord}$, see \cite[p345]{Ha03} or \cite[Proposition 2.13]{GKS21}.
    \item $\M_{\ord}^{\K}\cong\overline{\M}_{0,8(\frac{1}{4}+\epsilon)}$ is nonsingular \cite[Section 4]{KM11}.
\end{enumerate}

We conclude this section with a remark about the toroidal boundary, which is defined by
$T_{\ord}\defeq\overline{(\B^5/\Gamma_{\ord})}^{\tor}\setminus(\B^5/\Gamma_{\ord})$ and $T\defeq\overline{(\B^5/\Gamma)}^{\tor}\setminus(\B^5/\Gamma)$ 
respectively.
The divisor $T_{\ord}$ has 35 irreducible components (mapping to the 35 cusps in the Baily-Borel compactification). We write them as $T_{\ord, i}$ for $i=1,\dots 35$.
Note that $T_{\ord, i}\cong\P^2\times\P^2$ by \cite[Remark 6]{MS21} or \cite[Example 2.12]{GKS21}.
The boundary divisor $T$ is irreducible (and maps to the unique cusp in the Baily-Borel compactification); see also \cite[Proposition 4.7]{Ko07a}.
We study $T_{\ord}$ and $T$ in detail in Lemma \ref{lem:boundary}.

\section{(Non-) Extendability of the Deligne-Mostow isomorphism}
\label{section:extendability}
\subsection{Non-transversality in the Kirwan blow-up}
\label{non-transversality}
In this subsection, we show that the discriminant divisor and the boundary divisor do not intersect transversally in $\M^{\K}$.

To prove this statement, we will need a detailed analysis of stabilizer groups. For an algebraic group $G$ we will denote the connected component of the identity by $G^{\circ}$. 
The following two lemmas are modeled on \cite[Lemma 2.3]{CMGHL24} and \cite[Lemma 2.4]{CMGHL24}.
Below, we denote by $x_0, x_1$ the homogeneous coordinate of $\P^1$.
In this terminology, the polystable point $c_{4,4}$ corresponds to $x_0^4x_1^4$.
\begin{lem}
\label{lem:stabilizers}
The following equalities hold:
\begin{align*} 
    R&\defeq\Stab(c_{4,4})=\Bigl\{\begin{pmatrix}
\lambda & 0 \\
0 & \lambda^{-1} \\
\end{pmatrix}\in\SL_2(\C)\Bigr\}\bigcup \Bigl\{\begin{pmatrix}
0 & \lambda \\
-\lambda^{-1} & 0 \\
\end{pmatrix}\in\SL_2(\C)\Bigr\}\cong \C^{\times}\rtimes\SS_2\\
    R^{\circ}&\defeq\Stab(c_{4,4})^{\circ}\cong\C^{\times}.
    \end{align*}    
\end{lem}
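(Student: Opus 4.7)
The plan is to realize $c_{4,4}$ explicitly as the binary octic $x_0^4 x_1^4 \in \P^8 = \P(\Sym^8(\C^2)^{\vee})$, and then determine exactly which $g \in \SL_2(\C)$ fix the corresponding point of $\P^8$. Concretely, writing $g = \begin{pmatrix} a & b \\ c & d\end{pmatrix}$, the induced action sends $x_0^4 x_1^4$ to $(ax_0+bx_1)^4(cx_0+dx_1)^4$, so $g \in \Stab(c_{4,4})$ if and only if there exists $\mu \in \C^{\times}$ with
\[
(ax_0+bx_1)^4(cx_0+dx_1)^4 = \mu\, x_0^4 x_1^4.
\]

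Comparing the divisors of zeroes on $\P^1$, the two linear forms $ax_0+bx_1$ and $cx_0+dx_1$ must cut out, as an unordered pair (each with multiplicity four), the same two points $[1:0]$ and $[0:1]$ as $x_0$ and $x_1$. This reduces the problem to two cases. In the first case, $ax_0+bx_1$ is proportional to $x_0$ and $cx_0+dx_1$ to $x_1$; then $b=c=0$ and $ad=1$ forces $g = \diag(\lambda,\lambda^{-1})$. In the second case, the roles are swapped: $ax_0+bx_1 \in \C^\times \cdot x_1$ and $cx_0+dx_1 \in \C^\times\cdot x_0$, which forces $a=d=0$ and $-bc = 1$, yielding $g = \begin{pmatrix} 0 & \lambda \\ -\lambda^{-1} & 0 \end{pmatrix}$. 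Together these give exactly the subset of $\SL_2(\C)$ described in the statement.

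It remains to identify the group structure. The diagonal subgroup $T = \{\diag(\lambda,\lambda^{-1})\}$ is a copy of $\C^\times$ and forms a connected, closed, one-dimensional subgroup; the antidiagonal coset is a translate of $T$ by the Weyl element $w = \begin{pmatrix} 0 & 1 \\ -1 & 0 \end{pmatrix}$, and $w^2 = -\id \in T$, while conjugation by $w$ acts on $T$ by $\lambda \mapsto \lambda^{-1}$. Hence $R$ is generated by $T$ and $w$, the two cosets of $T$ in $R$ are swapped by $w$, and $R/T \cong \SS_2$, so $R \cong \C^\times \rtimes \SS_2$ with the non-trivial action. Since $T$ is connected and open in $R$, and the antidiagonal coset is disjoint from $T$, we conclude $R^{\circ} = T \cong \C^\times$.

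The argument is essentially routine once the correct reformulation in terms of roots of the binary form is made; the only potential pitfall is the case analysis, where one must remember that we are working in $\P^8$ rather than $\Sym^8$, so only the divisor of zeroes (not the polynomial itself) needs to be preserved, which is why the swap giving the second component appears at all.
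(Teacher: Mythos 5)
The paper gives no proof of this lemma (it is stated as modeled on \cite[Lemma 2.3]{CMGHL22}), and your direct computation via the divisor of zeros of the binary octic $x_0^4x_1^4$ is exactly the natural argument; the case analysis (fix both $[1:0],[0:1]$ versus swap them) and the identification $R^\circ = T$ are all correct.

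One small point worth flagging, which is an imprecision in the statement of the lemma itself that your write-up inherits: you correctly observe that $w^2=-\id\in T$, but you then conclude $R\cong\C^\times\rtimes\SS_2$. In fact, since every antidiagonal element $\begin{pmatrix}0&\lambda\\-\lambda^{-1}&0\end{pmatrix}$ squares to $-I$, there is no involution in $R\setminus T$, so the extension $1\to T\to R\to\SS_2\to 1$ does \emph{not} split in $\SL_2(\C)$; $R$ is the normalizer $N_{\SL_2}(T)$, which is the non-split extension (the split one would be $O_2(\C)\subset\GL_2$). The cleaner statement is just that $R^\circ\cong\C^\times$ and $R/R^\circ\cong\SS_2$, which is all the paper actually uses later (for the Luna slice analysis and for $|G_X|=2$ in the canonical-bundle computation), so the slip is harmless, but the inference ``hence $R\cong\C^\times\rtimes\SS_2$'' as written is a non-sequitur from $w^2=-\id$.
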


Now, let us prepare for the local computations.
The Luna slice theorem gives us a tool to study them as handled in the case of the moduli space of cubic threefolds \cite[Subsection 4.3.1]{CMGHL23} or cubic surfaces \cite[Lemma 3.4]{CMGHL24}; see also \cite[Subsection 7.1]{Zh05}.

\begin{lem}
\label{lem:Luna_slice}
A Luna slice for $c_{4,4}$, normal to the orbit $\SL_2(\C)\cdot \{c_{4,4}\}\subset\P^8$, 
is isomorphic to $\C^6$, spanned by the 6 monomials 
\[x_0^8, \quad x_1^8,\quad x_0^7x_1,\quad x_0x_1^7,\quad x_0^6x_1^2,\quad x_0^2x_1^6\]
in the tangent space $H^0(\P^1, \OO_{\P^1}(8))$.
Projectively, 
\begin{align*}
    \P^6&=\{\a_0x_0^8+\a_1x_1^8+\b_0x_0^7x_1+\b_1 x_0x_1^7+\g_0x_0^6x_1^2+\g_1x_0^2x_1^6+kx_0^4x_1^4\}\\
    &\subset \P H^0(\P^1,\OO_{\P^1}(8)) =\P^8.
\end{align*}
\end{lem}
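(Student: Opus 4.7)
The plan is to derive the slice as a direct consequence of Luna's étale slice theorem, applied to the reductive stabilizer $R\cong\C^\times\rtimes\SS_2$ of Lemma \ref{lem:stabilizers}. First I would observe that the slice must have dimension $\dim\P^8-\dim(\SL_2(\C)\cdot[c_{4,4}])=8-(3-1)=6$, matching the claim. The task is then to produce an explicit $R$-stable linear complement in $T_{[c_{4,4}]}\P^8=H^0(\P^1,\OO_{\P^1}(8))/\langle c_{4,4}\rangle$ to the $2$-dimensional image of $\mathfrak{sl}_2$.

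Second, I would compute the orbit tangent by differentiating the standard generators $e=x_0\partial_{x_1}$, $f=x_1\partial_{x_0}$, $h=x_0\partial_{x_0}-x_1\partial_{x_1}$ of $\mathfrak{sl}_2$ on $c_{4,4}$. One finds $h\cdot c_{4,4}=0$ (consistent with $h$ generating $\Lie R^\circ$), $e\cdot c_{4,4}=4x_0^5x_1^3$, and $f\cdot c_{4,4}=4x_0^3x_1^5$, so the image of $\mathfrak{sl}_2$ in $T_{[c_{4,4}]}\P^8$ is the $2$-dimensional subspace $\langle x_0^5x_1^3,\,x_0^3x_1^5\rangle$.

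Third, I would exploit the weight decomposition of $R^\circ$. Since $\diag(\lambda,\lambda^{-1})$ acts on $x_0^ax_1^b$ with weight $a-b$, the nine monomials in $H^0(\P^1,\OO_{\P^1}(8))$ form a weight basis with distinct weights $0,\pm 2,\pm 4,\pm 6,\pm 8$. The weight-$0$ line is $\langle c_{4,4}\rangle$ itself and the weight-$(\pm 2)$ subspace is precisely the orbit tangent computed above, so the unique $R^\circ$-invariant complement is the sum of the weight spaces at weights $\pm 4,\pm 6,\pm 8$, namely the span $V$ of the six listed monomials. The nontrivial element of $R/R^\circ\cong\SS_2$ interchanges $x_0$ and $x_1$, hence negates $R^\circ$-weights and pairwise permutes the six listed monomials, so $V$ is in fact $R$-stable, not merely $R^\circ$-stable.

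Finally, Luna's étale slice theorem then identifies the affine translate $c_{4,4}+V\subset H^0(\P^1,\OO_{\P^1}(8))$ as a slice normal to the orbit, whose projectivisation is the $\P^6\subset\P^8$ displayed in the statement. I do not anticipate a serious difficulty here: all the content is in the weight bookkeeping and in checking that the component group acts compatibly, both of which are routine once the stabilizer description from Lemma \ref{lem:stabilizers} is in hand.
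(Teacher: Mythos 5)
Your proposal is correct and follows essentially the same route as the paper, which simply refers to the analogous computation in \cite[Subsection 4.3.1]{CMGHL19} and records the orbit tangent space as the span of the entries of $\begin{pmatrix} x_0^4x_1^4 & x_0^3x_1^5 \\ x_0^5x_1^3 & x_0^4x_1^4 \end{pmatrix}$, exactly matching your $e\cdot c_{4,4}$, $f\cdot c_{4,4}$, $h\cdot c_{4,4}$ computation. You make explicit the $R^\circ$-weight bookkeeping and the check that the complement is stable under the component group $R/R^\circ\cong\SS_2$, which the paper leaves implicit.
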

\begin{proof}
This can be proven in the same way as \cite[Subsection 4.3.1]{CMGHL23}.
We note that the (affine) tangent space of the orbit  is given by the entries of the matrix
\[\begin{pmatrix}
x_0^4x_1^4 & x_0^3x_1^5 \\
x_0^5x_1^3 & x_0^4x_1^4 \\
\end{pmatrix}.
\]
\end{proof}
Let \[\diag(\lambda,\lambda^{-1})\defeq
\begin{pmatrix}
\lambda & 0 \\
0 & \lambda^{-1} \\
\end{pmatrix},\ 
\antidiag(\lambda,-\lambda^{-1})\defeq
\begin{pmatrix}
0 & \lambda \\
-\lambda^{-1} & 0 \\
\end{pmatrix}.
\]
Then, the action of an element of $\Stab(c_{4,4})$ is given by  
\begin{align}
\label{eq:action_diag}
    \diag(\lambda,\lambda^{-1})\cdot(\a_0,\a_1,\b_0,\b_1,\g_0,\g_1)&=(\lambda^8\a_0,\lambda^{-8}\a_1,\lambda^6\b_0,\lambda^{-6}\b_1,\lambda^4\g_0,\lambda^{-4}\g_1)\\
\label{eq:action_anti_diag}
    \antidiag(\lambda,-\lambda^{-1})\cdot(\a_0,\a_1,\b_0,\b_1,\g_0,\g_1)&=(\lambda^{-8}\a_1,\lambda^8\a_0, -\lambda^{-6}\b_1,-\lambda^6\b_0,\lambda^{-4}\g_1,\lambda^4\g_0).
\end{align}
We write the coordinates of the Kirwan blow-up $Bl_0\C^6\subset\C^6\times\P^5$ of the Luna slice as $(\a_0, \a_1, \b_0, \b_1, \g_0, \g_1)$ and $[S_0:S_1:T_0:T_1:U_0:U_1]$.

\begin{lem}
\label{lem:unstable_locus}
The unstable locus of the action of the stabilizer $\SL(c_{4,4})$ of $c_{4,4}$ in $\SL_2(\C)$ 
is the codimension three locus
\[\{S_0=T_0=U_0=0\}\cup\{S_1=T_1=U_1=0\}\subset\P^5.\]
\end{lem}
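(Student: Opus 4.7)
The plan is to combine the explicit Luna slice description of Lemma \ref{lem:Luna_slice} with the Hilbert--Mumford numerical criterion applied to the reductive stabilizer. By the standard Kirwan procedure, a point of the exceptional $\P^5$ lying over $\SL_2(\C)\cdot c_{4,4}$ is unstable precisely when it is unstable under the stabilizer acting on the projectivized normal slice, with the linearization twisted by the character through which the stabilizer acts on $c_{4,4}$; compare \cite[Subsection 4.3.1]{CMGHL19} and \cite[Lemma 3.4]{CMGHL22}.

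First, I would identify the reductive stabilizer. In $\GL_2(\C)$ the condition that $g\cdot x_0^4x_1^4$ is a scalar multiple of $x_0^4x_1^4$ cuts out $\GL(c_{4,4})=(\C^\times)^2\rtimes\SS_2$, whose identity component is the maximal torus $T=\{\diag(\lambda,\mu)\}$ and whose Weyl element is the antidiagonal matrix appearing in (\ref{eq:action_anti_diag}). On the Luna slice spanned by the monomials listed in Lemma \ref{lem:Luna_slice}, the torus $T$ acts with weights $(8,0), (0,8), (7,1), (1,7), (6,2), (2,6)$, while $c_{4,4}=x_0^4x_1^4$ itself has $T$-weight $(4,4)$.

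Second, for the GIT stability analysis on the exceptional $\P^5$, I would subtract the weight $(4,4)$ of $c_{4,4}$ (the Kirwan character twist) to obtain the effective weights $\pm(4,-4), \pm(3,-3), \pm(2,-2)$. All six effective weights are supported on the antidiagonal line in the character lattice, so the $T$-action on $\P^5$ factors through the one-parameter subgroup $\lambda\mapsto(\lambda,\lambda^{-1})$, which is precisely $R^\circ$ from Lemma \ref{lem:stabilizers} and matches (\ref{eq:action_diag}). The Hilbert--Mumford numerical criterion then says that a point $[S_0:S_1:T_0:T_1:U_0:U_1]$ is unstable iff $0$ fails to lie in the convex hull of the effective weights of its nonzero coordinates, i.e., iff all nonzero coordinates carry weights of a single strict sign. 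Since $S_0,T_0,U_0$ carry positive and $S_1,T_1,U_1$ negative weights, this yields the unstable locus $\{S_1=T_1=U_1=0\}\cup\{S_0=T_0=U_0=0\}$, a union of two $\P^2$'s of codimension three in $\P^5$.

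To conclude, the Weyl element in (\ref{eq:action_anti_diag}) sends the coordinate of weight $(a,b)$ to the one of weight $(b,a)$ and hence simply interchanges the two $\P^2$-components of the torus-unstable locus without enlarging it; thus the full $\GL(c_{4,4})$-unstable locus agrees with the $T$-unstable locus. I do not foresee any serious obstacle here: the only point requiring care is the correct bookkeeping of the character twist in the Kirwan linearization, after which the argument reduces to a purely combinatorial Hilbert--Mumford computation for a one-parameter torus on $\P^5$.
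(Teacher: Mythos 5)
Your proof is correct and follows essentially the same route as the paper: read off the torus weights on the Luna slice and apply the Hilbert--Mumford/convex-hull criterion (the paper cites the analogous step in \cite[Lemma 3.6]{CMGHL22}), with the Weyl element merely permuting the two torus-unstable $\P^2$'s. The only difference is cosmetic: you set up the computation with the full $2$-dimensional torus in $\GL_2(\C)$ and normalize by the $(4,4)$-character twist before restricting to the antidiagonal one-parameter subgroup, whereas the paper works directly with the one-dimensional $R^\circ=\C^\times\subset\SL_2(\C)$ and its explicit weights $\pm 8,\pm 6,\pm 4$ from (\ref{eq:action_diag}); both bookkeepings give the same six weights and hence the same unstable locus.
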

\begin{proof}
From (\ref{eq:action_diag}), the action of $R^{\circ}\cong\C^{\times}$ is given by 
\begin{align*}
    \diag(\lambda,\lambda^{-1})\cdot (S_0,S_1,T_0,T_1,U_0,U_1)=(\lambda^8S_0,\lambda^{-8}S_1,\lambda^6T_0,\lambda^{-6}T_1,\lambda^4U_0,\lambda^{-4}U_1).
\end{align*}
Thus, the representation of  $\C^{\times}$ on $\C^6$ decomposes into 6 characters.
By the same discussion as in the proof of \cite[Lemma 3.6]{CMGHL24},  the points in the unstable locus are characterized 
by the property that the convex hull spanned by the weights appearing in the above representation does not contain the origin.
This condition holds if and only if $\{S_0=T_0=U_0=0\}$ or $\{S_1=T_1=U_1=0\}$.
\end{proof}

We denote by $\D_{\ord}$ (resp. $\D$) the discriminant divisor, corresponding to the closure of $H/\Gamma_{\ord}$ (resp. $H/\Gamma$), through the isomorphism $\phi_{\ord}:\M_{\ord}^{\GIT}\to\overline{\B^5/\Gamma_{\ord}}^{\BB}$ (resp. $\phi:\M^{\GIT}\to\overline{\B^5/\Gamma}^{\BB}$).
Let $\widetilde{\D}$ be the strict transform of the discriminant divisor $\D$ in the blow-up $\M^{\K}\to\M^{\GIT}$.
Besides, let $\Delta_{\ord}$ (resp. $\Delta$) be the union of boundary divisors of $\M^{\K}_{\ord}$ (resp. $\M^{\K}$).

\begin{thm}
\label{thm:nonord_nontransversal}
The strict transform $\widetilde{\D}$ and the boundary divisor $\Delta$ 
do not meet generically transversally in $\M^{\K}$.
\end{thm}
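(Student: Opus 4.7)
The plan is to reduce the statement to a local computation on the Luna slice $\C^6$ at $c_{4,4}$ provided by Lemma \ref{lem:Luna_slice}. By Luna, the Kirwan blow-up $\M^{\K}$ is étale-locally modelled near $\Delta$ by the GIT quotient $(Bl_0\C^6)^{ss}/\!/R$, where $R=\C^\times\rtimes\SS_2$ is the stabilizer of Lemma \ref{lem:stabilizers}. Under this model, $\Delta$ is the image of the exceptional divisor $E\cong\P^5$ and $\widetilde{\D}$ is the image of the strict transform of the discriminant locus $\D\subset\C^6$. Because an octic near $c_{4,4}$ first becomes non-reduced by coalescence of two points inside the cluster near $[1{:}0]$ or that near $[0{:}1]$, the locus $\D$ splits on the slice into two local branches $\D_0,\D_1$ which are exchanged by the $\SS_2$-factor of $R$ according to (\ref{eq:action_anti_diag}).

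The heart of the argument is an explicit identification of the local equation of $\D_0$. I would dehomogenise the octic at $x_0=1$ and apply Weierstrass preparation to extract the factor whose four roots lie near $y=0$: after setting $k=1$, this factor is, to leading order in the slice coordinates, the quartic
\begin{equation*}
P(y) = y^4 + \g_0 y^2 + \b_0 y + \a_0.
\end{equation*}
Its classical discriminant, viewed as a polynomial in $(\a_0,\b_0,\g_0)$, is
\begin{equation*}
256\a_0^3 - 128\g_0^2\a_0^2 + 144\g_0\b_0^2\a_0 - 27\b_0^4 + 16\g_0^4\a_0 - 4\g_0^3\b_0^2,
\end{equation*}
whose initial form is $256\a_0^3$, while every other monomial has total degree at least $4$. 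Hence $\D_0$ has multiplicity $3$ at the origin and its tangent cone is the \emph{tripled} hyperplane $\{\a_0^3=0\}$; by symmetry, $\D_1$ has multiplicity $3$ with tangent cone $\{\a_1^3=0\}$.

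Next, I would perform the strict transform computation in a convenient affine chart of $Bl_0\C^6$, for instance $\{U_0=1\}$, by writing $(\a_0,\b_0,\g_0,\a_1,\b_1,\g_1)=\lambda\cdot(S_0,T_0,1,S_1,T_1,U_1)$. The pullback of the discriminant factors as $\lambda^3 F$ with $F|_{\lambda=0}=256\,S_0^3$, so $\widetilde{\D}_0\cap E$ is set-theoretically the hyperplane $\{S_0=0\}\subset E$ but carries scheme-theoretic multiplicity $3$ along it. A one-line Jacobian computation then shows that at a generic point $p$ of this intersection $\widetilde{\D}_0$ is smooth with Zariski tangent space $\{d\lambda=0\}$, which coincides with $T_p E$. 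Consequently, $\widetilde{\D}_0$ and $E$ are tangent, and hence fail to meet transversally on $Bl_0\C^6$.

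Finally, to transfer this conclusion to $\M^{\K}$, I would check that at a generic point of $\widetilde{\D}_0\cap E$ the $\C^\times$-factor of $R$ acts with non-zero weights on the relevant coordinates and the $\SS_2$-factor sends $\widetilde{\D}_0$ onto the distinct branch $\widetilde{\D}_1$, so the $R$-stabilizer is trivial and the GIT quotient map is étale at $p$. Since étale morphisms preserve non-transversality of divisors, $\widetilde{\D}$ and $\Delta$ do not meet generically transversally in $\M^{\K}$. The main obstacle I expect is the careful justification of the Weierstrass reduction to $P(y)$ and the verification that the tangent cone of $\D_0$ is the \emph{non-reduced} hyperplane $\{\a_0^3=0\}$: it is precisely this multiplicity-$3$ structure that forces the tangency, whereas a multiplicity-$2$ discriminant with a smooth quadric tangent cone (as in \cite{CMGHL22}) would instead produce a transverse intersection with $E$.
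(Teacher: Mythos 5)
Your proposal is correct and follows essentially the same route as the paper's proof: the key observation is that each local branch of the discriminant on the Luna slice has multiplicity three at the origin with a tripled-hyperplane tangent cone, so that after blowing up the strict transform is tangent to the exceptional divisor along a hyperplane, and one then checks that the group action does not destroy this tangency at a generic point. The one step needing more care is the descent to $\M^{\K}$: since $R^{\circ}\cong\C^{\times}$ is positive-dimensional the quotient map is not itself \'etale, so, as in the paper, one passes to a further Luna slice transverse to the residual torus orbits, and the essential verification is that the finite residual stabilizer (here the $\SS_2$ exchanging the two branches) does not fix a generic boundary point---for if it did, the quotient could smooth out the tangency, which is precisely what happens on the toroidal side in Theorem \ref{thm:nonord_transversal}.
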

\begin{proof}
We work on the local computation via the Luna slice described in Lemma \ref{lem:Luna_slice}.
Before taking the GIT quotient, we have the blow-up
\[Bl_0\C^6\to\C^6,\]
where the coordinates of the affine space (the Luna slice) $\C^6$ are $\a_0, \a_1, \b_0, \b_1, \g_0, \g_1$
(this is the first step of the Kirwan blow-up).
In this Luna slice, $\D$ locally near the origin, is given by 
\begin{align*}
    &\mathrm{disc}(x^4+\a_0x^2+\b_0x+\g_0)\cdot\mathrm{disc}(y^4+\a_1y^2+\b_1y+\g_1)\\
    &=(256\g_0^3-128\a_0^2\g_0^2+144\a_0\b_0^2\g_0-27\b_0^4+16\a_0^4\g_0-4\a_0^3\b_0^2)\\
    &\cdot (256\g_1^3-128\a_1^2\g_1^2+144\a_1\b_1^2\g_1-27\b_1^4+16\a_1^4\g_1-4\a_1^3\b_1^2)\\
    &=0.
\end{align*}
The reason for this is that we consider the polystable point given by $x^4y^4$ and that the versal deformation of the quadruple point $x^4=0$ is given by $x^4+\a_0x^2+\b_0x+\g_0=0$.
We write this as  $V\defeq V_1\cup V_2$ with $\SS_2$ permuting the two components.
We consider the affine loci 
\[\PP\defeq(S_0\neq 0),\quad \QQ\defeq(T_0\neq 0),\quad \RR\defeq(U_0\neq 0).\]

First, on $\PP$, the inverse image of $V$ is 
\begin{align}
    &\a_0^6(256u_0^3-128\a_0u_0^2+144\a_0t_0^2u_0-27\a_0t_0^4+16\a_0^2u_0-4\a_0^2t_0^2)\label{eq:Luna_slice_P}\\
    &\cdot(256u_1^3-128\a_0s_1^2u_1^2+144\a_0s_1t_1^2u_1-27\a_0t_1^4+16\a_0^2s_1^4u_1-4\a_0^2s_1^3t_1^2)\notag\\
    &=0,\notag
\end{align}
where
\[s_1\defeq\frac{S_1}{S_0},\quad t_i\defeq\frac{T_i}{S_0},\quad u_i\defeq\frac{U_i}{S_0}\]
and the coordinates of $\PP$ are $(\a_0,s_1,t_0,t_1,u_0,u_1)$.
Hence, the strict transform of $V$ is given by 
\begin{align*}
    &(256u_0^3-128\a_0u_0^2+144\a_0t_0^2u_0-27\a_0t_0^4+16\a_0^2u_0-4\a_0^2t_0^2)\\
    &\cdot(256u_1^3-128\a_0s_1^2u_1^2+144\a_0s_1t_1^2u_1-27\a_0t_1^4+16\a_0^2s_1^4u_1-4\a_0^2s_1^3t_1^2)\\
    &=0,
\end{align*}
since the exceptional divisor of the blow-up is $(\a_0=0)$.
The Luna slice for the action $\mathbb{T}\subset R$ is given by $(s_1=1)$ in $\PP$ because for any point $(\a_0,s_1,t_0,t_1,u_0,u_1)\in\PP$ with $s_1\neq 0$, there exists a complex number $\lambda$ such that $\lambda^{-16}=s_1$.
Thus, the intersection of the strict transform of $V$ with this Luna slice is given by 
\begin{align*}
    &\{256u_0^3-\a_0(128u_0^2+144t_0^2u_0-27t_0^4+16\a_0u_0-4\a_0t_0^2)\}\\
    &\cdot\{256u_1^3-\a_0(128u_1^2+144t_1^2u_1-27t_1^4+16\a_0u_1-4\a_0t_1^2)\}\\
    &=0.
\end{align*}
This shows that the first (resp. second) factor intersect the exceptional divisor $(\a_0=0)$ non-transversally along $(u_0=0)$ (resp. $(u_1=0)$).

Next, on $\QQ$, the inverse image of $V$ is 
\begin{align*}
    &\b_0^6(256u_0^3-128\b_0s_0^2u_0^2+144\b_0s_0u_0-27\b_0+16\b_0^2s_0u_0-4\b_0^2s_0)\\
    &\cdot(256u_1^3-128\b_0s_1^2u_1^2+144\b_0s_1t_1^2u_1-27\b_0t_1^4+16\b_0^2s_1^4u_1-4\b_0^2\a_1^3u_1^2)\\
    &=0,
\end{align*}
where
\[s_i\defeq\frac{S_i}{T_0},\quad t_1\defeq\frac{T_1}{T_0},\quad u_i\defeq\frac{U_i}{T_0}\]
and the coordinates of $\PP$ is $(s_0,s_1,\b_0,t_1,u_0,u_1)$.
Hence, the strict transform of $V$ is given by 
\begin{align*}
    &(256u_0^3-128\b_0s_0^2u_0^2+144\b_0s_0u_0-27\b_0+16\b_0^2s_0u_0-4\b_0^2s_0)\\
    &\cdot(256u_1^3-128\b_0s_1^2u_1^2+144\b_0s_1t_1^2u_1-27\b_0t_1^4+16\b_0^2s_1^4u_1-4\b_0^2\a_1^3u_1^2)\\
    &=0,
\end{align*}
since the exceptional divisor of the blow-up is $(\b_0=0)$.
The Luna slice for the action $\mathbb{T}\subset R$ is given by $(t_1=1)$ in $\PP$ because for any point $(s_0,s_1,\b_0,t_1,u_0,u_1)\in\QQ$ with $t_1\neq 0$, there exists a complex number $\lambda$ such that $\lambda^{-12}=t_1$.
Thus, the intersection of the strict transform of $V$ with this Luna slice is given by 
\begin{align*}
    &\{256u_0^3-\b_0(128s_0^2u_0^2+144s_0u_0-27+16\b_0s_0u_0-4\b_0s_0)\}\\
    &\cdot\{256u_1^3-\b_0(128s_1^2u_1^2+144s_1u_1-27+16\b_0s_1^4u_1-4\b_0\a_1^3u_1^2)\}\\
    &=0.
\end{align*}
This shows that the first (resp. second) factor intersect the exceptional divisor $(\b_0=0)$ non-transversally along $(u_0=0)$ (resp. $(u_1=0)$).

Finally, on $\RR$, the inverse image of $V$ is 
\begin{align*}
    &\g_0^6(256-128\g_0s_0^2+144\g_0s_0t_0^2-27\g_0t_0^4+16\g_0^2s_0^4-4\g_0^2s_0^3t_0^2)\\
    &\cdot(256u_1^3-128\g_0s_1^2+144\g_0s_1t_1^2-27\g_0t_1^4+16\g_0^2s_1^4-4\g_0^2s_1^3t_1^2)\\
    &=0,
\end{align*}
where
\[s_i\defeq\frac{S_i}{U_0},\quad t_i\defeq\frac{T_i}{U_0},\quad u_1\defeq\frac{U_1}{U_0}\]
and the coordinates of $\RR$ are $(s_0,s_1,t_0,t_1,\g_0,u_1)$.
Hence the strict transform of $V$ is given by 
\begin{align*}
    &(256-128\g_0s_0^2+144\g_0s_0t_0^2-27\g_0t_0^4+16\g_0^2s_0^4-4\g_0^2s_0^3t_0^2)\\
    &\cdot(256u_1^3-128\g_0s_1^2+144\g_0s_1t_1^2-27\g_0t_1^4+16\g_0^2s_1^4-4\g_0^2s_1^3t_1^2)\\
    &=0,
\end{align*}
since the exceptional divisor of the blow-up is $(\g_0=0)$.
The Luna slice for the action $\mathbb{T}\subset R$ is given by $(g_1=1)$ in $\RR$ because for any point $(s_0,s_1,t_0,t_1,\g_0,u_1)\in\RR$ with $u_1\neq 0$, there exists a complex number $\lambda$ such that $\lambda^{-8}=\g_1$.
Thus, the intersection of the strict transform of $V$ with this Luna slice is given by 
\begin{align*}
    &(256-128\g_0s_0^2+144\g_0s_0t_0^2-27\g_0t_0^4+16\g_0^2s_0^4-4\g_0^2s_0^3t_0^2)\\
    &\cdot\{256u_1^3-\g_0(128s_1^2+144s_1t_1^2-27t_1^4+16\g_0s_1^4-4\g_0s_1^3t_1^2)\}.
\end{align*}
This shows that the first factor has an empty intersection with the exceptional divisor $(\g_0=0)$, whereas the second factor intersects the exceptional divisor non-transversally along $(u_1=0)$.

Next, we consider the action of the finite quotient $\SS_2\cong R/R^{\circ}$.
We only consider the intersection $(\a_0=u_0=0)$ on affine locus $\PP$ (the other cases being the same).
If $\diag(\lambda,\lambda^{-1})$ fixes a general point in $\PP\cap(\a_0=u_0=0)$, by the condition on $t_0$, we have $\lambda^2=1$.
This implies that $\diag(\lambda,\lambda^{-1})$ is trivial as an element of $\PGL_2(\C)$.

Thus, finally, let us consider the case of the form $\antidiag(\lambda,-\lambda^{-1})$.
This element swaps the coordinates $u_0$ and $u_1$.
However, a general point $p=(s_1,t_0,t_1,u_1)\in\PP\cap(\a_0=u_0=0)$ satisfies $u_1\neq 0$, and this implies $\antidiag(\lambda,-\lambda^{-1})$ cannot stabilize any point.
More explicitly, we have 
\[\antidiag(\lambda,-\lambda^{-1})\cdot(s_1,t_0,t_1,u_1)=(\lambda^{-16}s_1^{-1},\lambda^2t_1s_1^{-1}, -\lambda^{14}t_0s_1^{-1}, 0)\]
by (\ref{eq:action_anti_diag}).
This cannot be equal to $p=(s_1,t_0,t_1,u_1)$ with $u_1\neq 0$.
\end{proof}

\begin{rem}\label{rem:orderediso}
The situation in the ordered case is different. Indeed, a similar calculation, again using a  Luna slice argument, shows that the 
discriminant divisors and the boundary divisors meet transversally everywhere on $\M^{\K}_{\ord}$.
\end{rem}
\begin{rem}\label{rem:isoordered}
In Theorem \ref{thm:coh_ordered_tor}, we shall see that $\M_{\ord}^{\K}$ and $\overline{\B^5/\Gamma_{\ord}}^{\tor}$ have the same cohomology. Note that this proof does not require a priori knowledge that the two spaces are isomorphic.
Using the information of their Betti numbers, we can give a short independent proof that $\M_{\ord}^{\K} \cong \overline{\B^5/\Gamma_{\ord}}^{\tor}$ which is independent of \cite{GKS21}. This argument follows a similar argument given by Casalaina-Martin for cubic surfaces. By the Borel extension theorem \cite[Theorem A]{Bo72}, the map $\M_{\ord} \to \B^5/\Gamma_{\ord}$ extends to a morphism $\M_{\ord}^{\K} \to \overline{\B^5/\Gamma_{\ord}}^{\tor}$. Since both spaces have 
the same Betti numbers, this must be an isomorphism or a small contraction. But the latter is impossible since $\overline{\B^5/\Gamma_{\ord}}^{\tor}$ is $\Q$-factorial (and in fact smooth). 
\end{rem}

In the rest of this subsection, we work on the stabilizers of points in the exceptional divisor $\Delta$ in $\M^{\K}$.
The following proposition plays a critical role in the proof of Theorem \ref{thm:not_K_equiv}.

\begin{prop}
\label{prop:e_not_divisible_by_5}
For any point in $x\in\Delta$, the order of its stabilizer $S_x\defeq \Stab_R(x)$ is $\mathrm{not}$ divisible by 5.
\end{prop}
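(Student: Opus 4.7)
The plan is to work in the Luna slice model of Lemma \ref{lem:Luna_slice} and translate the statement into a weight computation on the exceptional $\P^5$ of $Bl_0\C^6$, extracting the possible orders of stabilizers directly from the weight data $(8,-8,6,-6,4,-4)$.

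First I would invoke the Luna slice theorem to identify an \'etale neighbourhood of $f^{-1}(c_{4,4})$ in $\M^{\K}$ with the GIT quotient of $Bl_0\C^6$ by $R$. Under this identification, the exceptional divisor $\Delta$ corresponds \'etale-locally to the GIT quotient of the $R$-semistable locus in the exceptional $\P^5\subset Bl_0\C^6$. Any point $x\in\Delta$ then admits a representative $\tilde{x}=[S_0:S_1:T_0:T_1:U_0:U_1]\in\P^5$, and, up to conjugation, one has $S_x\cong\Stab_R(\tilde{x})$.

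Next I would use equation (\ref{eq:action_diag}): the connected component $R^{\circ}\cong\C^{\times}$ acts on the homogeneous coordinates with weights $(8,-8,6,-6,4,-4)$. An element $\lambda\in R^{\circ}$ fixes $\tilde{x}$ projectively if and only if $\lambda^{w_i-w_j}=1$ for every pair $i,j$ of indices where $\tilde{x}$ is nonzero, so $S_x\cap R^{\circ}$ is cyclic of order dividing the gcd of the corresponding set $\{w_i-w_j\}$. By Lemma \ref{lem:unstable_locus}, $R$-semistability of $\tilde{x}$ forces at least one of $S_0,T_0,U_0$ and at least one of $S_1,T_1,U_1$ to be nonzero, which guarantees that this gcd divides at least one ``cross'' difference lying in $\{8,10,12,14,16\}$.

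The remaining task is the combinatorial check that, after taking into account the full set of constraints satisfied by points in $\Delta$, the resulting gcd avoids all multiples of $5$. Once the $R^{\circ}$-part is controlled, the finite component group $R/R^{\circ}\cong\SS_2$ is incorporated via the short exact sequence
\[1\longrightarrow S_x\cap R^{\circ}\longrightarrow S_x\longrightarrow S_x/(S_x\cap R^{\circ})\longrightarrow 1,\]
whose quotient injects into $\SS_2$; this forces $|S_x|$ to divide $2\cdot|S_x\cap R^{\circ}|$ and so preserves coprimality to $5$. I expect the main obstacle to lie precisely in this combinatorial analysis: namely, ruling out configurations whose weight-difference data could a priori produce a gcd divisible by $5$ (most notably the potential value $10$ arising from a pair of nonzero coordinates with weights $\{6,-4\}$ or $\{-6,4\}$). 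The careful use of the full $R$-semistability, together with the $\SS_2$-action swapping the $0$- and $1$-indexed coordinates, should be the key ingredient in excluding these remaining configurations.
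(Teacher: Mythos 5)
Your proposal follows the paper's general strategy: Luna slice, weights $(8,-8,6,-6,4,-4)$ from (\ref{eq:action_diag}), stabilizer order as a gcd of weight differences, with the finite $\SS_2$-component handled at the end via the obvious short exact sequence. You also correctly isolate the danger: among the cross-differences $\{8,10,12,14,16\}$, only $10$ is divisible by $5$, and it arises from a nonzero pair with weights $\{6,-4\}$ or $\{-6,4\}$. But you defer precisely this case to an unresolved combinatorial check, proposing to eliminate it via $R$-semistability and the $\SS_2$-action --- and that strategy fails. Take $\tilde{x}=[0:0:T_0:0:0:U_1]$ with $T_0,U_1\neq 0$ and all other coordinates zero. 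By Lemma \ref{lem:unstable_locus} it is $R$-semistable (one positive-weight and one negative-weight coordinate is nonzero); it is even properly stable, since its $R^{\circ}$-orbit $\{[0:0:1:0:0:c]:c\neq 0\}$ is one-dimensional and closed in $(\P^5)^{\ss}$, the two boundary points $[0:0:1:0:0:0]$ and $[0:0:0:0:0:1]$ being unstable. Its stabilizer in $R^{\circ}\cong\C^{\times}$ is $\{\lambda:\lambda^{6}=\lambda^{-4}\}=\{\lambda:\lambda^{10}=1\}\cong\Z/10\Z$, of order divisible by $5$, and the $\SS_2$-generator sends $\tilde{x}$ to a point supported on $T_1,U_0$, hence to a different point of $\P^5$, so it contributes nothing to the stabilizer. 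The key step you leave open therefore cannot be closed by the tools you list.

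You should also be aware that the paper's own argument exhibits the same blind spot: in chart $\mathcal{Q}=(T_0\neq 0)$ (resp.\ $\mathcal{R}=(U_0\neq 0)$) it restricts to points with $s_0\neq 0$ or $u_0\neq 0$ (resp.\ $s_0\neq 0$ or $t_0\neq 0$), citing Lemma \ref{lem:unstable_locus}, but since $T_0\neq 0$ (resp.\ $U_0\neq 0$) already supplies a nonzero positive-weight coordinate in those charts, these extra conditions do not follow from semistability, and their failure is exactly the configuration $\tilde{x}$ above. So your gap coincides with one the paper's proof passes over, and it is worth checking carefully whether a further implicit restriction on representatives is intended or whether the statement itself requires amendment (note in either case that the proof of Theorem \ref{thm:not_K_equiv} is not actually endangered, since the hypothetical equality of intersection numbers would force a factor of $5^5$ into $e$, while all stabilizer orders are visibly bounded by $32$).
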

\begin{proof}
Since the order of the finite part of $R$ is not divisible by 5, it is enough to concentrate on the connected component $R^{\circ}$, which is isomorphic to $\C^{\times}$.
For simplicity, we will also use  $S_x$ to denote the stabilizer of $x$ in $R^{\circ}$.
By the $\SS_2$ symmetry, it suffices to show the claim for the affine open sets $\mathcal{P}$, $\mathcal{Q}$ and $\mathcal{R}$.

First, let us consider the points $(\a_0,s_1,t_0,t_1,u_0,u_1)\in\mathcal{P}$.
In this locus, the exceptional divisor corresponds to $(\a_0=0)$, and the action of $\diag(\lambda,\lambda^{-1})$ is given by 
\[\diag(\lambda,\lambda^{-1})\cdot(0,s_1,t_0,t_1,u_0,u_1)=(0,\lambda^{-16}s_1,\lambda^{-2}t_0,\lambda^{-14}t_1,\lambda^{-4}u_0,\lambda^{-12}u_1).\]
Since the Kirwan blow-up is completed after one step, it is enough to consider the stable points after blowing up the orbit $\SL_2(\C)\cdot \{c_{4,4}\}$.
It follows from Lemma \ref{lem:unstable_locus} that 
both \{$t_0\neq 0$ or $u_0\neq 0$\} and \{$s_1\neq 0$ or $t_1\neq 0$ or $u_1\neq 0$\}.
If $t_0\neq 0$, then $S_x\cong\Z/2\Z$.
If $u_0\neq 0$, then 
\[S_x\cong\begin{cases}
\Z/4\Z&(s_1\neq 0\ \mathrm{or}\ u_1\neq 0)\\
\Z/2\Z&(t_1\neq 0).
\end{cases}
\]

The other cases are similar, but we nevertheless state them for completeness, starting with the points $(s_0,s_1,0,t_1,u_0,u_1)\in\mathcal{P}\cap (\beta_0=0)$.
The action of $\diag(\lambda,\lambda^{-1})$ is given by 
\[\diag(\lambda,\lambda^{-1})\cdot(s_0,s_1,0,t_1,u_0,u_1)=(\lambda^2s_0,\lambda^{-14}s_1,0,\lambda^{-12}t_1,\lambda^{-2}u_0,\lambda^{-10}u_1).\]
Again by Lemma \ref{lem:unstable_locus}, we can assume that \{$s_0\neq 0$ or  $u_0\neq 0$\} and \{$s_1\neq 0$ or $t_1\neq 0$ or $u_1\neq 0$\}.
In all cases, we obtain $S_x\cong \Z/2\Z$.
Finally, let $(s_0,s_1,t_0,t_1,0,u_1)\in\mathcal{P}\cap (\gamma_0=0)$.
The action of $\diag(\lambda,\lambda^{-1})$ is given by 
\[\diag(\lambda,\lambda^{-1})\cdot(s_0,s_1,t_0,t_1,0,u_1)=(\lambda^4s_0,\lambda^{-4}s_1,\lambda^2t_0,\lambda^{-10}t_1,0,\lambda^{-8}u_1).\]
As above, we study the case holding both of \{$s_0\neq 0$ or $t_0\neq 0$\} and \{$s_1\neq 0$ or $t_1\neq 0$ or $u_1\neq 0$\}.
If $t_0\neq 0$, then $S_x\cong\Z/2\Z$.
If $s_0\neq 0$, then 
\[S_x\cong\begin{cases}
\Z/4\Z&(s_1\neq 0\ \mathrm{or}\ u_1\neq 0)\\
\Z/2\Z&(t_1\neq 0).
\end{cases}
\]
This calculation completes the proof.

\end{proof}

\subsection{Transversality in the toroidal compactification}
\label{transversality}
In this subsection, we prove that the discriminant divisors and the boundary divisors intersect transversally in $\overline{\B^5/\Gamma_{\ord}}^{\tor}$  and generically transversally in $\overline{\B/\Gamma}^{\tor}$.
We will see that this also implies the transversality at a generic point in $\overline{\B^5/\Gamma}^{\tor}$.
Throughout this subsection, let $\mathbb{N}_8\defeq\{1,2,\cdots,8\}$ and $I\subset\mathbb{N}_8$.
As before, $(\M_{\ord}^{\GIT})^o$ denotes the set of $8$-tuples where all points are different; see Section \ref{section:preparation}.
Below, we shall recall the construction of the blow-up sequence $\overline{\M}_{0,8}\to\M_{\ord}^{\K}\to\M_{\ord}^{\GIT}$.
By the explicit description of the blow-ups or the interpretation as the configuration space, the locus $(\M_{\ord}^{\GIT})^o$ does not meet the centres of each blow-up step.
Thus, we consider $(\M_{\ord}^{\GIT})^o$ to be also an open subset of $\overline{\M}_{0,8}$ and $\M_{\ord}^{\K}$ via birational morphisms.

First, we work on $\M^{\GIT}_{\ord}$.
The boundary divisor  $\M^{\GIT}_{\ord}\setminus (\M_{\ord}^{\GIT})^o$ is 
\[D^{(0)}_2\defeq\bigcup_{|I|=2}D_2^{(0)}(I)=\D_{\ord}\subset\M_{\ord}^{\GIT}\]
by \cite[p1134]{KM11} ($m=4, k=0$).
Here, $D_2^{(0)}(I)$ is defined by
\[D_2^{(0)}(I)\defeq\overline{\left\{(x_1,\cdots,x_8)\in (\P^1)^8\mid x_i=x_j\ \mathrm{if}\ i,j\in I  \right\}}//\SL_2(\C).\]
The number of such $I$ is 28.
As in Section \ref{section:preparation}, the morphism $\varphi_1:\M_{\ord}^{\K}\to\M_{\ord}^{\GIT}$ is the Kirwan blow-up whose centre is the locus of polystable orbits, consisting of 35 orbits (which in turn 
correspond to the $35$ cusps, see below).
We interpret $\varphi_1$ in terms of configuration spaces as follows.
Let 
\[\Sigma_4^{(0)}(I,I^{\perp})\defeq\overline{\left\{(x_1,\cdots,x_8)\in (\P^1)^8 \mid x_i=x_j\ \mathrm{if\ and\ only\ if}\ \{i,j\}\subset I\ \mathrm{or}\ \{i,j\}\subset I^{\perp} \right\}}//\SL_2(\C)\]
for $|I|=|I^{\perp}|=4$ and $I\sqcup I^{\perp}=\mathbb{N}_8$.
We also denote by $\Sigma_4^{(0)}$ their union running through such $I$ and $I^{\perp}$.
Note that there are 35 pairs $(I,I^{\perp})$ satisfying $|I|=|I^{\perp}|=4$ and $I\sqcup I^{\perp}=\mathbb{N}_8$.
In this terminology, the centre of $\varphi_1$ is described by
\[\Sigma_4^{(0)}=\{c_{\ord,i}\}_{i=1}^{35}\]
where $\{c_{\ord,i}\}_{i=1}^{35}$ are the polystable points of $\M^{\GIT}_{\ord}$, corresponding to 35 Baily-Borel cusps.

Next, we consider $\M^{\K}_{\ord}(\cong\overline{\M}_{0,8(\frac{1}{4}+\epsilon)})$.
Let 
\[D_4^{(1)}(I)\defeq\varphi_1^{-1}\left(\Sigma_4^{(0)}(I,I^{\perp})\right)\]
for $|I|=4$.
Then, the exceptional divisor of $\varphi_1$ is
\[D_4^{(1)}\defeq\bigcup_{|I|=4}D_4^{(1)}(I)=\varphi_1^{-1}\left(\Sigma_4^{(0)}\right)=\Delta_{\ord}.\]
Note that each irreducible component of $\Delta_{\ord}$ is isomorphic to $\P^2\times\P^2$ by \cite[Proposition 4.5]{Ha03}, \cite[Remark 6]{MS21} or \cite[Example 2.12]{GKS21}.
Besides, let 
\[D_2^{(1)}(I)\defeq\overline{\varphi_1^{-1}\left(D_2^{(0)}(I)\setminus\Sigma_4^{(0)}\right)}\]
be the strict transform of $D_2^{(0)}(I)$ for $|I|=2$, and $D_2^{(1)}$ be their union.
Then $D_2^{(1)}$ is the strict transform of $\D_{\ord}$, i.e.,
\[D_2^{(1)}=\widetilde{\D_{\ord}}\]
and has 28 irreducible components.
In this setting, the boundary divisor $\M^{\K}_{\ord}\setminus (\M_{\ord}^{\GIT})^o$ is 
\[D^{(1)}_2\bigcup D^{(1)}_4=\widetilde{\D_{\ord}}\bigcup \Delta_{\ord}\]
by \cite[p1134]{KM11} ($m=4, k=1$).

Next, we describe the centre of the blow-up $\varphi_2\defeq\varphi_2'\circ\varphi_{\ord}:\overline{\M}_{0,8}\to\M^{\K}_{\ord}$, 
which is a codimension 2 locus.
Let 
\begin{align*}
\Sigma_3^{(0)}(I)&\defeq\overline{\left\{(x_1,\cdots,x_8)\in (\P^1)^8 \mid x_i=x_j\ \mathrm{if\ and\ only\ if}\ i,j\in I \right\}}//\SL_2(\C)\\
\Sigma_3^{(1)}(I)&\defeq\overline{\varphi_1^{-1}\left(\Sigma_3^{(0)}(I)\setminus\Sigma_4^{(0)}\right)}
\end{align*}
for $|I|=3$ and
\[\Sigma_3^{(1)}\defeq\bigcup_{|I|=3}\Sigma_3^{(1)}(I).\]
Then, the centre of the blow up $\varphi_2:\overline{\M_{0,8}}\to\M_{\ord}^{\K}$ 
is $\Sigma_3^{(1)}$.

Finally, we study $\overline{\M}_{0,8}$.
For $|I|=3$, let 
\[D_3^{(2)}(I)\defeq\varphi_2^{-1}\left(\Sigma_3^{(1)}(I)\right)\]
be an irreducible component of the exceptional divisor of $\varphi_2$.
Then, the variety
\[D_3^{(2)}\defeq\bigcup_{|I|=3}D_3^{(2)}(I)=\varphi_2^{-1}\left(\Sigma_3^{(1)}\right)\]
 is exactly the exceptional divisor of the blow-up $\varphi_2$.
 For $|I|=2,4$, we denote by $D_{|I|}^{(2)}(I)$ the strict transform of $D_{|I|}^{(1)}$ and define 
\[D_2^{(2)}\defeq\bigcup_{|I|=2}D_2^{(2)}(I),\quad D_4^{(2)}\defeq\bigcup_{|I|=4}D_4^{(2)}(I).\]
Now, the boundary divisor  $\overline{\M}_{0,8}\setminus (\M_{\ord}^{\GIT})^o$ 
is
\[D^{(2)}_2\bigcup D^{(2)}_3 \bigcup D^{(2)}_4\]
by \cite[p1134]{KM11} ($m=4, k=2$).

The boundaries which are contracted through the map $\varphi_2$ can also be calculated as follows.
By \cite[Theorem 4.1]{Ha03}, there exists the reduction map
\[\varphi_2':\overline{\M}_{0,8}\to\overline{\M}_{0,8(\frac{1}{4}+\epsilon)}.\]
The map $\varphi_2'$ is a divisorial contraction, more precisely:

\begin{lem}[c.f. {\cite[Proposition 4.5]{Ha03}}]
The morphism $\varphi_2'$ contracts the boundary divisors $D_3^{(2)}$.
\end{lem}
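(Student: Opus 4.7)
The plan is to derive the lemma directly from Hassett's reduction theory of weighted pointed stable curves in \cite[Section 4]{Ha03}. The map $\varphi_2'$ is precisely the reduction morphism associated with lowering the weight vector from $A = (1,\ldots,1)$ (which produces the Deligne--Mumford space $\overline{\M}_{0,8}$) to $B = (\tfrac14+\epsilon,\ldots,\tfrac14+\epsilon)$ (which produces $\overline{\M}_{0,8(\frac14+\epsilon)} \cong \M^{\K}_{\ord}$). By the general principles of \cite[Theorem 4.1, Proposition 4.5]{Ha03}, a boundary divisor of $\overline{\M}_{0,8}$ is contracted by $\varphi_2'$ precisely when its generic element contains a rational tail that ceases to be $B$-stable; such a tail is then crushed to a single coincidence point on the neighbouring component.

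The key numerical check is the log-canonical degree of a rational tail. A $\P^1$-tail carrying $k$ of the original marked points, each of weight $\tfrac14+\epsilon$, and attached by one node to the rest of the curve has degree
\[
-2 + 1 + k\bigl(\tfrac14+\epsilon\bigr) = -1 + \tfrac{k}{4} + k\epsilon.
\]
For $0 < \epsilon \ll 1$ this is strictly positive for $k \geq 4$ and strictly negative for $k = 2,3$. Hence for $|I|=3$ the generic element of $D_3^{(2)}(I)$ has an $I$-tail which is $B$-unstable: $\varphi_2'$ must contract this tail and replace it by a single coincidence point on the remaining component. The resulting curve is a smooth $\P^1$ with five distinct marked points together with one extra point where the three marked points indexed by $I$ collide, which is allowed in the new weight scheme since $3\bigl(\tfrac14+\epsilon\bigr) < 1$. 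This configuration carries only $6$ distinguished points on $\P^1$ rather than the $7$ present on generic elements of $D_3^{(2)}(I)$, so its image lies in a codimension two stratum of $\overline{\M}_{0,8(\frac14+\epsilon)}$. Since $D_3^{(2)}(I)$ has codimension one in $\overline{\M}_{0,8}$, it is therefore contracted by $\varphi_2'$, and taking the union over the $\binom{8}{3}$ choices of $I$ yields the assertion.

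For completeness I would also record that no other boundary component is contracted. For $D_4^{(2)}(I)$ the four-point tail has degree $4\epsilon > 0$ and is $B$-stable, so $\varphi_2'$ restricts to a birational morphism onto $D_4^{(1)} = \Delta_{\ord}$. For $D_2^{(2)}(I)$ the two-point tail is $B$-unstable and is crushed, but because $2\bigl(\tfrac14+\epsilon\bigr) < 1$ the double coincidence is permitted and the image is again a divisor, namely $D_2^{(1)}$. The only care needed is to fix the range of $\epsilon$, for instance $0 < \epsilon < \tfrac{1}{12}$; within this range Hassett's reduction theorem together with the degree computation above gives the conclusion immediately, so the main ``obstacle'' is really just bookkeeping rather than any genuine difficulty.
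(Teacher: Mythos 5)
Your proof is correct and is essentially the argument in the paper, though you unpack it a bit more: the paper directly invokes the criterion from Kiem--Moon (the exceptional locus of the reduction map consists of the $D_r^{(2)}(I)$ with $r>2$ and $r(\tfrac14+\epsilon)\leq 1$, forcing $r=3$), whereas you re-derive this from Hassett's stability inequality, computing the log-canonical degree $-1+k(\tfrac14+\epsilon)$ of a rational tail and checking dimensions of the images. Both come to the same numerical check and the same conclusion; your added observation that the $k=2$ tail, although unstable, still maps onto a divisor (hence is not contracted) is the content of the $r>2$ clause in the cited criterion, so nothing is missing.
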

\begin{proof}
By \cite[p1121]{KM11}, the exceptional locus of $\varphi_2'$ is the union of $D_{|I|}^{(2)}(I)$ with $I=\{i_1,\dots,i_r\}$ for $r>2$ so that 
\[r\times (\frac{1}{4}+\epsilon)\leq 1.\]
This implies $r=3$.
\end{proof}

By construction, $D^{(2)}_2\cup D^{(2)}_3 \cup D^{(2)}_4$ is normal crossing (since $\overline{M}_{0.8}$ is a normal crossing compactification of $(\B^5\setminus H)/\Gamma_{\ord}$).
We denote $\H_{\ord}\defeq\overline{H/\Gamma_{\ord}}$ and $\H\defeq\overline{H/\Gamma}$, where the closures are taken in the respective Baily-Borel compactifications.
We further denote by $\widetilde{\H_{\ord}}$ the strict transform of $\H_{\ord}$ under $\pi_{\ord}:\overline{\B^5/\Gamma_{\ord}}^{\tor}\to\overline{\B^5/\Gamma_{\ord}}^{\BB}$.
 Since the contraction divisor of $\varphi_2$ is only $D^{(2)}_3$, we now obtain the following:

\begin{thm}
\label{thm:normal_crossing_ord}
The boundary $\widetilde{\H_{\ord}}\cup T_{\ord}$ is a normal crossing divisor.
In particular, $\widetilde{\H_{\ord}}$ and $T_{\ord}$ intersect transversally everywhere in $\overline{\B^5/\Gamma_{\ord}}^{\tor}$.
\end{thm}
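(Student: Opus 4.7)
The plan is to push the normal crossing property from $\overline{\M}_{0,8}$ down to $\overline{\B^5/\Gamma_{\ord}}^{\tor}$ via the reduction morphism $\varphi_2'$. Using the isomorphisms $\varphi_{\ord}$ and $\Phi_{\frac{1}{4}+\epsilon}$ recorded in Figure \ref{com_diag}, the claim is equivalent to the assertion that $D_2^{(1)}\cup D_4^{(1)}$ is a normal crossing divisor on $\overline{\M}_{0,8(\frac{1}{4}+\epsilon)}\cong\M_{\ord}^{\K}$, since under these identifications $\widetilde{\H_{\ord}}$ corresponds to $D_2^{(1)}$ and $T_{\ord}$ corresponds to $D_4^{(1)}$.

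The two ingredients I would combine are the following. First, by construction $\overline{\M}_{0,8}$ is a normal crossing compactification of $(\B^5\setminus H)/\Gamma_{\ord}$, so that $D_2^{(2)}\cup D_3^{(2)}\cup D_4^{(2)}$ is a normal crossing divisor on $\overline{\M}_{0,8}$. Second, by the preceding lemma, $\varphi_2'$ contracts only $D_3^{(2)}$, sending it to the smooth codimension-two centre $\Sigma_3^{(1)}$; in particular it restricts to an isomorphism $\overline{\M}_{0,8}\setminus D_3^{(2)}\xrightarrow{\sim}\overline{\M}_{0,8(\frac{1}{4}+\epsilon)}\setminus\Sigma_3^{(1)}$ carrying $D_2^{(2)}\cup D_4^{(2)}$ onto $D_2^{(1)}\cup D_4^{(1)}$. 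These two observations immediately give the normal crossing property on the open subset $\overline{\B^5/\Gamma_{\ord}}^{\tor}\setminus\Sigma_3^{(1)}$.

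To finish, I would verify the transversality of $\widetilde{\H_{\ord}}$ and $T_{\ord}$ at points of the codimension-two locus $\Sigma_3^{(1)}$ by a direct local computation. Since $\varphi_2'$ is the blow-up of the smooth subvariety $\Sigma_3^{(1)}$ in the smooth ambient space, and since upstairs on $\overline{\M}_{0,8}$ both $D_2^{(2)}$ and $D_4^{(2)}$ meet the exceptional divisor $D_3^{(2)}$ transversally by the normal crossing hypothesis, a standard blow-up computation in local coordinates shows that the images $\widetilde{\H_{\ord}}=\varphi_2'(D_2^{(2)})$ and $T_{\ord}=\varphi_2'(D_4^{(2)})$ also meet transversally along $\Sigma_3^{(1)}\cap T_{\ord}$; the key point is that $D_4^{(2)}$ is not contracted and is identified biholomorphically with $T_{\ord}$ in a neighbourhood of this locus.

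The main obstacle is the behaviour along $\Sigma_3^{(1)}$ itself: three distinct irreducible components of $\widetilde{\H_{\ord}}$, namely the $D_2^{(1)}(I)$ with $|I|=2$ and $I\subset\{1,2,3\}$, all contain the stratum $\Sigma_3^{(1)}(\{1,2,3\})$, so a naive pushforward of the full normal crossing structure requires care. What is preserved, and what is needed for the later application, is the pairwise transversality of the two divisors $\widetilde{\H_{\ord}}$ and $T_{\ord}$, which is precisely the geometric content exploited in the generic transversality argument of Section \ref{section:extendability}.
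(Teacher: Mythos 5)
You follow essentially the same route as the paper: transport the normal crossing structure on $\overline{\M}_{0,8}$ down through $\varphi_2'$, using that only $D_3^{(2)}$ is contracted. (The paper's own proof is just the two sentences preceding the theorem statement, and is terser than your write-up.) Your final paragraph also correctly anticipates the subtlety the paper addresses only with a remark after the theorem — ``by this formulation, we mean that $\widetilde{\H_{\ord}}$ and $T_{\ord}$ intersect transversally everywhere along any component of their intersection'' — namely that the three components $D_2^{(1)}(\{i,j\})$ with $\{i,j\}\subset\{1,2,3\}$ all pass through $\Sigma_3^{(1)}(\{1,2,3\})$ after the contraction, so the union is not a simple normal crossing divisor in the strict sense; what survives, and what Section \ref{section:extendability} actually uses, is the pairwise transversality of $\widetilde{\H_{\ord}}$ and $T_{\ord}$.

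One inaccuracy in your third paragraph is worth flagging. The restriction $\varphi_2'\vert_{D_4^{(2)}}\colon D_4^{(2)}\to T_{\ord}$ is \emph{not} biholomorphic near $\Sigma_3^{(1)}\cap T_{\ord}$: the blow-up centre $\Sigma_3^{(1)}$ is not contained in $T_{\ord}$ but meets it in a codimension-three locus, so this restriction is the blow-up of $T_{\ord}$ along $\Sigma_3^{(1)}\cap T_{\ord}$, not an isomorphism. This does not break the argument, but the cleaner reason for transversality along $\Sigma_3^{(1)}\cap T_{\ord}$ is intrinsic: for $p$ in this locus, the relevant component $D_2^{(1)}(\{i,j\})$ of $\widetilde{\H_{\ord}}$ \emph{contains} $\Sigma_3^{(1)}(\{1,2,3\})$, so $T_p\Sigma_3^{(1)}\subset T_pD_2^{(1)}$; on the other hand $\Sigma_3^{(1)}$ is transverse to $T_{\ord}$ (which does follow from the upstairs normal crossing structure), so $T_p\Sigma_3^{(1)}\not\subset T_pT_{\ord}$; hence $T_pD_2^{(1)}\neq T_pT_{\ord}$.
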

Again, by this formulation, we mean that $\widetilde{\H_{\ord}}$ and $T_{\ord}$ intersect  transversally everywhere along any component of their intersection. 
As a consequence, we obtain the following corollary, 
where $\widetilde{\H}$ is the strict transform of $\H$ under $\pi:\overline{\B^5/\Gamma}^{\tor}\to\overline{\B^5/\Gamma}^{\BB}$.
\begin{cor}
The divisor $\widetilde{\H}\cup T$ is a  normal crossing divisor, up to finite quotients.
\end{cor}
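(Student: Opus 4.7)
The plan is to deduce the unordered statement directly from Theorem \ref{thm:normal_crossing_ord} via the finite quotient map between the two toroidal compactifications. Recall that $\Gamma_{\ord}\subset \Gamma$ is a normal subgroup of finite index (the discriminant kernel), and the quotient $\Gamma/\Gamma_{\ord}$ acts on $\B^5/\Gamma_{\ord}$ with quotient $\B^5/\Gamma$; under the isomorphisms $\phi_{\ord}$ and $\phi$ this action is identified with the natural $\SS_8$-action permuting the 8 marked points, and $\overline{\B^5/\Gamma_{\ord}}^{\BB}/\SS_8\cong \overline{\B^5/\Gamma}^{\BB}$.

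First I would check that the $\SS_8$-action extends to the toroidal level, giving a finite quotient morphism
\[
q:\overline{\B^5/\Gamma_{\ord}}^{\tor}\longrightarrow \overline{\B^5/\Gamma}^{\tor}.
\]
This is standard: the toroidal compactification of a ball quotient is canonical (no choice of fan is involved), so any automorphism of the ball quotient induced by a normaliser element of $\Gamma_{\ord}$ in $\U(L)(\Q)$ lifts uniquely to an automorphism of the toroidal compactification. Consequently $q$ is a finite, surjective morphism of normal projective varieties. Moreover, $q$ sends the discriminant divisor to the discriminant divisor and the toroidal boundary to the toroidal boundary, so $q^{-1}(\widetilde{\H})=\widetilde{\H_{\ord}}$ (on the complement of the higher-codimension branch locus) and $q^{-1}(T)=T_{\ord}$.

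Next, by Theorem \ref{thm:normal_crossing_ord}, the divisor $\widetilde{\H_{\ord}}\cup T_{\ord}$ is a normal crossing divisor on the smooth variety $\overline{\B^5/\Gamma_{\ord}}^{\tor}$. Since $q$ is a finite quotient map by the action of $\SS_8$, every point of $\overline{\B^5/\Gamma}^{\tor}$ admits an \'etale-local model of the form $U/G_x$, where $U$ is a $G_x$-invariant analytic neighbourhood of a preimage in $\overline{\B^5/\Gamma_{\ord}}^{\tor}$ and $G_x\subset \SS_8$ is the stabilizer. In this local model, the divisor $\widetilde{\H}\cup T$ is precisely the image of the normal crossing divisor $(\widetilde{\H_{\ord}}\cup T_{\ord})\cap U$ under the finite quotient by $G_x$. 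This is exactly what the statement ``normal crossing up to finite quotients'' means, and completes the proof.

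The only step that requires any care is the assertion that the $\SS_8$-action lifts to the canonical toroidal compactification in a way that exchanges the labelled irreducible components of $T_{\ord}$ and $\widetilde{\H_{\ord}}$ correctly; I expect this to be routine given the canonicity of toroidal compactifications of ball quotients and the fact that the action already exists on $\overline{\B^5/\Gamma_{\ord}}^{\BB}$ with quotient $\overline{\B^5/\Gamma}^{\BB}$, so no genuine obstacle arises.
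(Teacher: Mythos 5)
Your argument is correct and is exactly the (implicit) argument the paper has in mind: the corollary is stated directly after Theorem~\ref{thm:normal_crossing_ord} as ``a consequence,'' with the understanding that the $\SS_8$-action on $\overline{\B^5/\Gamma_{\ord}}^{\tor}$ induces a finite quotient map to $\overline{\B^5/\Gamma}^{\tor}$ under which normal crossings descend to normal crossings up to finite quotients. Your write-up just spells out the steps (normality of $\Gamma_{\ord}$ in $\Gamma$, canonicity of the toroidal compactification so the $\SS_8$-action lifts, and compatibility of $q$ with the discriminant and boundary divisors); that is all that is needed, and it matches the paper.
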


Next, we discuss the generical transversality of the intersection of $\widetilde{\H}$ and $T$ in $\overline{\B^5/\Gamma}^{\tor}$. 
Note that $\Gamma/\Gamma_{\ord}\cong \SS_8$ acts on $\{T_{\ord,i}\}_{i=1}^{35}$ transitively and
\[1\to \SS_4\times \SS_4\to \Stab_{\SS_8}(T_{\ord,i})\to \SS_2\to 1.\]
Next, we study the description of the boundary and group actions via the Hermitian form.
The claim of the following lemma is already known in terms of a moduli description by \cite[Remark 6]{MS21} or \cite[Example 2.12]{GKS21}, 
but we need the details in the proof of Theorem  \ref{thm:nonord_transversal}.
\begin{lem}
\label{lem:boundary}
The following holds.
\begin{enumerate}
    \item $T_{\ord,i}\cong\P^2\times\P^2$.
    \item $T\cong\left(\P^2/\SS_4\times\P^2/\SS_4\right)/\SS_2$.
\end{enumerate}
\end{lem}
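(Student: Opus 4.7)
The plan is to prove (1) by an explicit Luna slice computation at the polystable orbit $c_{\ord,i}$, and then to derive (2) by taking the quotient of (1) by the residual $\Gamma/\Gamma_{\ord}\cong\SS_8$-action on $\overline{\B^5/\Gamma_{\ord}}^{\tor}$. Throughout I fix a partition $\{I,I^\perp\}$ of $\mathbb{N}_8$ with $|I|=|I^\perp|=4$ and choose coordinates on $\P^1$ so that $c_{\ord,i}$ has the $I$-points at $0$ and the $I^\perp$-points at $\infty$.

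For (1), I would identify the Luna slice at $c_{\ord,i}$ inside $(\P^1)^8//\SL_2(\C)$. The stabiliser of $c_{\ord,i}$ in $\SL_2(\C)$ is the diagonal torus $\mathbb{T}\cong\C^\times$, so the $\SL_2$-orbit has dimension $2$ and the normal slice is $6$-dimensional; it decomposes as $\C^3\oplus\C^3$, each summand parametrising the positions of one $4$-point cluster on the tangent line to $\P^1$ at $0$ (resp.\ at $\infty$) modulo the common translation of the cluster. This is exactly the standard representation of the corresponding $\SS_4$, and $\mathbb{T}$ acts on the two summands with opposite nonzero weights. In parallel with Lemma \ref{lem:unstable_locus}, the unstable locus in the exceptional $\P^5$ of the Kirwan blow-up is precisely the pair of codimension-$3$ linear $\P^2$'s obtained by setting either all the $0$-side or all the $\infty$-side coordinates to zero, and the balanced GIT calculation (via the Segre embedding of invariants $a_i b_j$) then gives $\P^5//\C^\times\cong\P^2\times\P^2$. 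Via $\M_{\ord}^{\K}\cong\overline{\B^5/\Gamma_{\ord}}^{\tor}$, this identifies $T_{\ord,i}$ with $\P^2\times\P^2$.

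For (2), I use that $\overline{\B^5/\Gamma}^{\tor}\cong\overline{\B^5/\Gamma_{\ord}}^{\tor}/\SS_8$ and that $\SS_8$ permutes the $35$ components $T_{\ord,i}$ transitively (they are in bijection with the partitions $\{I,I^\perp\}$). Hence $T$ is the quotient of any single $T_{\ord,i}$ by its stabiliser, the wreath product $\Stab_{\SS_8}(\{I,I^\perp\})\cong(\SS_4\times\SS_4)\rtimes\SS_2$. Tracing this action through (1), each $\SS_4$ permutes the labels within its cluster and therefore acts on the corresponding $\C^3$ by the standard representation, descending to the faithful action on the associated $\P^2$; the outer $\SS_2$ swaps the two clusters, interchanges the two $\C^3$-summands of the Luna slice, and hence swaps the two $\P^2$-factors. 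Combining these I would obtain
\[
T\cong(\P^2\times\P^2)/[(\SS_4\times\SS_4)\rtimes\SS_2]\cong(\P^2/\SS_4\times\P^2/\SS_4)/\SS_2.
\]
The one point that will require some care is to check that the outer $\SS_2$-action on $T_{\ord,i}=\P^2\times\P^2$ is the naive factor swap $(x,y)\mapsto(y,x)$ rather than a twist of it; this follows from Lemma \ref{lem:stabilizers}, whose extra component of $R$ is represented by $\antidiag(\lambda,-\lambda^{-1})\in\SL_2(\C)$, which interchanges $0\leftrightarrow\infty$, so by formula (\ref{eq:action_anti_diag}) it permutes the two $\C^3$-summands of the Luna slice exactly as required.
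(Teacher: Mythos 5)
Your proof is correct, but it takes a genuinely different route from the paper's. The paper proves the lemma entirely on the ball-quotient side: it chooses an isotropic vector $h$, writes down the parabolic stabilizer $N(F)$, its unipotent radical $W(F)$ and centre $Z(F)$, performs the partial quotient to obtain $\C^4/W(F)\cong (E_{\sqrt{-1}})^4$, and then quotients further by $\U(D_4^{\oplus 2})$ using Dolgachev's table to arrive at $((E_{\sqrt{-1}})^2/\U(D_4))^2/\SS_2 \cong (\P^2/\SS_4)^2/\SS_2$, with the ordered case coming from $\widetilde{\U}(D_4)\cong (\Z/2\Z)^2\times\SS_2$. You instead work on the GIT side: a Luna slice computation at the polystable orbit in $(\P^1)^8//\SL_2$, the identification $\P(\C^3\oplus\C^3)//\C^\times\cong\P^2\times\P^2$ via the Segre embedding, and transfer through $\M_{\ord}^{\K}\cong\overline{\B^5/\Gamma_{\ord}}^{\tor}$; part (2) follows by quotienting by $\Stab_{\SS_8}(\{I,I^\perp\})\cong(\SS_4\times\SS_4)\rtimes\SS_2$, using $\overline{\B^5/\Gamma}^{\tor}\cong\overline{\B^5/\Gamma_{\ord}}^{\tor}/\SS_8$.

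Both arguments are sound, but they buy different things. Your approach is arguably more elementary (pure GIT and representation theory of $\SS_4$, no need for the structure of $\U(D_4)$ or the CM elliptic curve), at the cost of having to invoke the isomorphism $\M_{\ord}^{\K}\cong\overline{\B^5/\Gamma_{\ord}}^{\tor}$ from Gallardo–Kerr–Schaffler; note this is legitimate precisely because you work with the \emph{ordered} spaces, where this isomorphism holds (it would fail for the unordered ones). The paper's approach is more work but produces explicit toroidal coordinates $(t, z_1,\dots,z_4)$ adapted to the cusp, which the authors re-use directly in the transversality computation of Theorem \ref{thm:nonord_transversal}; your proof establishes the lemma but would not supply those coordinates, so a later argument in the paper would need to be supplemented. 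Your check that the outer $\SS_2$ acts by the naive factor swap (via the antidiagonal $\SL_2$-element, compare formula (\ref{eq:action_anti_diag})) is exactly the right point to verify, and your reasoning there is correct; the sign twist on the $\beta$-coordinates is absorbed after quotienting by $\C^\times$.
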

\begin{proof}
    We orientate ourselves along the strategy of the proof of \cite[Proposition 7.8]{CMGHL23}.
    First, we take an isotropic vector $h=(1,0,0,0,0,0)\in L$ and denote by $F$ the corresponding cusp.
    As the unitary group acts transitively on the set of all cusps, this means no loss of generality.
    Also, taking $h^{\vee}=(0,1,0,0,0,0)$ as a further basis vector, we can replace our  Hermitian form by 
\[\begin{pmatrix}
 &  & 1-\sqrt{-1} \\
  & B & \\
1+\sqrt{-1}  &  &  \\
\end{pmatrix}\]
where
\[B\defeq\begin{pmatrix}
-2 & 1+\sqrt{-1} &  & \\
1-\sqrt{-1}  & -2 & & \\
  &  & -2 & 1+\sqrt{-1} \\
  &  & 1-\sqrt{-1} & -2  \\
\end{pmatrix}.\]
Then, 
\[N(F)\defeq\Stab_{\Gamma}(F)=\left\{g=\begin{pmatrix}
u & v & w  \\
  & X & y \\
  &  & s \\
\end{pmatrix}\ \middle|\ \begin{aligned}
&s\overline{u}=1,\ \overline{X^t}BX=B \\ &\overline{X^t}By+(1-\sqrt{-1})\overline{v^t}s=0\\ 
&\overline{y^t}By+(1+\sqrt{-1})\overline{s}w+(1-\sqrt{-1})s\overline{w}=0
\end{aligned}\right\}.\]
The unipotent radical of $N(F)$ is
\[W(F)=\left\{g=\begin{pmatrix}
1 & v & w  \\
  & I_{4} & y \\
  &  & 1 \\
\end{pmatrix}\ \middle|\ \begin{aligned}
&By+(1-\sqrt{-1})\overline{v^t}=0\\ 
&\overline{y^t}By+(1+\sqrt{-1})w+(1-\sqrt{-1})\overline{w}=0
\end{aligned}\right\}\]
    and the centre of $W(F)$ is 
    \[Z(F)=\left\{g=\begin{pmatrix}
1 &  & \sqrt{-1}(1-\sqrt{-1})w  \\
  & I_{4} &  \\
  &  & 1 \\
\end{pmatrix}\ \middle|\ \begin{aligned}
w\in\Z
\end{aligned}\right\}.\]
We take the partial quotient of $\B^5$ by the action of $Z(F)$:
\[\begin{array}{ccc}
\B^5                     &\hookrightarrow& \C^{\times}\times\C^4                  \\
        (z_0,z_1,z_2,z_3,z_4)                    & \mapsto   & (t=\exp\left(2\pi z_0/(1-\sqrt{-1})\right),z_1,z_2,z_3,z_4).
\end{array}\]
We shall here consider the quotient of $\C^4$ by $W(F)$.
For an element $g\in W(F)$, its action on $\underline{z}\defeq(z_1,z_2,z_3,z_4)$ is given by
\[g\cdot \underline{z}^t=\frac{1}{s}(X\underline{z}+y).\]
A straightforward computation shows that for given $y^t\in\Z[\sqrt{-1}]^4$, we can find suitable elements $w\in\Z[\sqrt{-1}]$ and $v\in\Z[\sqrt{-1}]^4$
such that $g=\begin{pmatrix}
1 & v & w  \\
  & I_4 & y \\
  &  & 1 \\
\end{pmatrix} \in W(F)$.
This implies that
\[\C^4/W(F)\cong (E_{\sqrt{-1}})^4,\]
where $E_{\sqrt{-1}}$ is the CM-elliptic curve $\C/\left(\Z+\sqrt{-1}\Z\right)$.
Now, we consider the effect of an element of the form  
\[g=\begin{pmatrix}
u &  &   \\
  & I_4 &  \\
  &  & s \\
\end{pmatrix}\in N(F).\]
Here, from the above action, $s\in\Z[\sqrt{-1}]^{\times}$ acts on $(E_{\sqrt{-1}})^4$ diagonally by multiplication with powers of $\sqrt{-1}$.
However, this element is already in $\U(D_4^{\oplus 2})$, thus it follows that 
$T\cong(E_{\sqrt{-1}})^4/\U(D_4^{\oplus 2})$.
Here, we note that $X=\U(D_4^{\oplus 2})$.
By \cite[Table 2]{Do08}, we have
\[\U(D_4^{\oplus 2})\cong (\left(\Z/2\Z\right)^2\times\SS_2)\rtimes\SS_4)^2\rtimes\SS_2.\]
See also \cite[Subsection 6.4]{Sh53}.
Since, the action of this group, described in \cite[Subsection 3.2, Table 2]{Do08}, gives 
\[(E_{\sqrt{-1}})^2/\U(D_4)\cong (\P^1)^2/\left(\SS_2\rtimes\SS_4\right) \cong \P^2/\SS_4,\]
where $\SS_4$ acts on $\P^2$ by the standard representation, 
we obtain 
\[(E_{\sqrt{-1}})^4/\U(D_4^{\oplus 2})\cong (\P^2/\SS_4)^2/\SS_2. \]
For the ordered case, a straightforward computation shows that $\widetilde{\U}(D_4)\cong(\Z/2\Z)^2\times\SS_2$, thus this gives 
\[T_{\ord,i}\cong\P^2\times\P^2.\]
\end{proof}

\begin{rem}\label{rem:structureT}
This description allows us to describe the geometry of the toroidal boundary $T$ explicitly. By the above Lemma \ref{lem:boundary} we know that
$T=(\P^2/\SS_4 \times \P^2/\SS_4)/\SS_2$ where $\SS_4$ acts on $\P^2$ by the standard $3$-dimensional representation and $\SS_2$ exchanges 
the two factors. 
We claim that $\P^2/\SS_4 \cong \P(1,2,3)$ where $\P(1,2,3)$ denotes the weighted projective space with weights $(1,2,3)$. 
This follows since the invariants are freely generated by the restriction of the elementary symmetric polynomials of degree $2,3,4$ 
on $\P^3$ restricted to the hyperplane $\sum_{i=0}^3 x_i=0$. Hence $\P^2/\SS_4 \cong \P(2,3,4) \cong \P(1,2,3)$. In conclusion we find that $T \cong S^2(\P(1,2,3))$.
\end{rem}

Before discussing the intersection of divisors on the toroidal compactifications, we recall the discriminant form, see \cite[Subsection 2.3]{Ko07a} (where the lattice is called $N$ compared to our $L$):
\[q_L:A_L\to \mathbb{F}_2.\]
Associated with $q_L$, there is an associated bilinear form $b_L(\ ,\ )$ on $A_L$.
Note that $q_L$ is isomorphic to the direct sum of 3 copies of the hyperbolic plane $u$
over $\mathbb{F}_2$ which follows from \cite[Subsection 2.2]{Ko07a} or explicit computation in terms of the concrete form of $L$.
Here, $u$ is defined by the matrix
\[\begin{pmatrix}
    0&1\\
    1&0\\
\end{pmatrix}\]
over $\mathbb{F}_2$.
We have to pay attention to the norm of a vector because our quadratic form exists over $\mathbb{F}_2$.
In other words, the norm is measured by $q_L$, not $b_L(\ ,\ )$.
\begin{lem}
\label{lem:finite_quadratic_form}
For a given isotropic vector $h$ in the finite quadratic space $\P(A_L)\cong \P(\mathbb{F}_2^6)$, the orthogonal complement $h^{\perp}\cong\P(\mathbb{F}_2^5)$ contains 19 isotropic vectors and 12 non-isotropic vectors.
In addition, the stabilizer of $\Stab(h)$ in $\SS_8$ acts transitively on the set consisting of all 12 non-isotropic vectors.
\end{lem}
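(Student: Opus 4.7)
The plan is to split the proof into two parts corresponding to the two assertions: first the counting of isotropic and non-isotropic vectors in $h^\perp$, then the transitivity of $\Stab_{\SS_8}(h)$ on the $12$ non-isotropic ones.

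\textbf{Step 1 (Counting).} Since $h$ is isotropic, $b_L(h,h) = 2 q_L(h) = 0$, so $h \in h^\perp$. For any $v \in h^\perp$ and $\alpha \in \mathbb{F}_2$,
\[q_L(v + \alpha h) \;=\; q_L(v) + \alpha\, b_L(v,h) + \alpha^2 q_L(h) \;=\; q_L(v),\]
so $q_L$ descends to the $4$-dimensional quotient $h^\perp/\langle h \rangle$. A standard Witt-splitting argument (extend $h$ to a hyperbolic pair inside $u^{\oplus 3}$) shows that this induced form is non-degenerate of hyperbolic type, hence isomorphic to $u^{\oplus 2}$. Over $\mathbb{F}_2$, $u^{\oplus 2}$ has $2^{3}+2 = 10$ zeros (including $0$) out of $16$, hence $6$ non-zeros. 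Pulling back along $h^\perp \twoheadrightarrow h^\perp/\langle h\rangle$ doubles both counts, so $h^\perp$ contains $20$ isotropic and $12$ non-isotropic vectors; removing the origin gives the asserted $19$ and $12$ in $\P(h^\perp)\cong \P(\mathbb{F}_2^5)$.

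\textbf{Step 2 (Transitivity).} The plan is to invoke Kond\={o}'s identifications from \cite[Subsection 4.2]{Ko07a}: the $35$ non-zero isotropic vectors of $A_L$ are in $\SS_8$-equivariant bijection with the $35$ unordered partitions $\{I,I^\perp\}$ of $\mathbb{N}_8$ into two $4$-subsets, while the $28$ non-isotropic vectors correspond to the $\binom{8}{2}=28$ pairs in $\mathbb{N}_8$; both actions of $\SS_8 \cong \Gamma/\Gamma_{\ord}$ are the natural permutation ones, reflecting the correspondence with the $35$ cusps and the $28$ discriminant components of $\overline{\B^5/\Gamma_{\ord}}^{\BB}$. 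Taking $h$ to correspond to $\{I,I^\perp\}$, its stabilizer $(\SS_4 \times \SS_4) \rtimes \SS_2$ acts on the $28$ pairs with exactly two orbits: the $16$ \emph{mixed} pairs (one element in each of $I,I^\perp$) and the $12$ \emph{internal} pairs (both in $I$, or both in $I^\perp$). Since the stabilizer preserves $h^\perp$, the set of non-isotropic vectors in $h^\perp$ is a union of such orbits; by Step 1 it has cardinality $12$, so it must be precisely the orbit of internal pairs. Transitivity on the $12$ internal pairs is then immediate: $\SS_4$ is transitive on $\binom{I}{2}$, and the outer $\SS_2$ swaps $I$ with $I^\perp$.

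The main obstacle I foresee is the $\SS_8$-equivariance of Kond\={o}'s bijection between non-isotropic vectors of $A_L$ and pairs in $\mathbb{N}_8$, which however is intrinsic to the K3-theoretic construction of $L$ in \cite{Ko07a}, where $\SS_8$ acts on $L$ by permuting the eight branch points. As a consistency check, both the $28$ non-isotropic vectors and the $28$ pairs form a single $\SS_8$-orbit, and likewise on the isotropic side, so the two permutation actions are indeed identified.
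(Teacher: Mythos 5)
Your proof is correct but takes a genuinely different route from the paper's. For the counting, the paper simply fixes $h=(1,0,0,0,0,0)\in u^{\oplus 3}$ and lists the twelve non-isotropic vectors in $h^\perp$ explicitly by hand; you instead pass to the non-degenerate quotient form on $h^\perp/\langle h\rangle \cong u^{\oplus 2}$ and count via the standard formula $2^{2n-1}+2^{n-1}$ for zeros of a hyperbolic form over $\mathbb{F}_2$, then pull back and delete the origin. Both give $19+12$; yours is more conceptual and avoids the explicit list. For transitivity, the paper argues directly in $\mathbb{F}_2^6$: given $v_1,v_2$, one constructs an isometry fixing $h$, swapping $v_1\leftrightarrow v_2$, and acting as the identity on $\langle v_1,v_2,h\rangle^\perp$, using that $v_1,v_2,h$ are independent. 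You instead transport the problem to the combinatorial side — non-isotropic vectors $\leftrightarrow$ pairs in $\mathbb{N}_8$, isotropic vectors $\leftrightarrow$ $4$-$4$ partitions — and observe that $(\SS_4\times\SS_4)\rtimes\SS_2$ has exactly two orbits on pairs (sizes $16$ and $12$), so the $12$-element set must be the internal-pair orbit, on which transitivity is obvious. Your transitivity argument is arguably cleaner (the paper's isometry extension is asserted rather than verified); the price is that it leans on the $\SS_8$-equivariance of Kond\={o}'s identification, which you flag and justify by its K3-theoretic origin — this is the same identification the paper invokes implicitly (e.g.\ when matching the $28$ non-isotropic vectors with the $28$ components of the discriminant). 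One small point: your "consistency check" that both sides are single $\SS_8$-orbits of size $28$ shows transitivity but not by itself that the permutation actions agree; the real justification is, as you say, the equivariance built into Kond\={o}'s construction, so you might lean on that rather than on the orbit count.
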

\begin{proof}
By \cite[Section 3]{FM11} or \cite[Proposition 3.2]{MT04}, we have an isomorphism $\Gamma/\Gamma_{\ord}\cong\SS_8\cong\O(\mathbb{F}_2^6)$. This naturally induces the action of the symmetric group $\SS_8$ on the discriminant group.
By \cite[Proposition 4.7 (ii)]{Ko07a}, the action on the set of isotropic vectors is transitive. 
Hence it suffices to consider one isotropic vector, say $h=(1,0,0,0,0,0)\in u^{\oplus 3}$.
Then, the non-isotropic vectors in $h^{\perp}$ are given by $(0,0,1,1,0,0), (0,0,1,1,1,0)$, $(0,0,1,1,0,1)$, $(0,1,1,1,0,0)$, $(0,1,1,1,1,0)$, $(0,1,1,1,0,1)$ and the vectors which arise from these by interchanging the last two components of $u^{\oplus 3}$.
Similarly, one obtains a complete list of isotropic vectors in $h^{\perp}$ (which contains $h$ itself).
The latter half of the statement is clear because for any two non-isotropic vectors $v_1$ and $v_2$, orthogonal to $h$, we can define an element $g\in\Stab(h)$ permuting $v_1$ and $v_2$, and extend it by the identity to $\l v_1,v_2,h\r^{\perp}\subset\mathbb{F}_2^6$.
Here, we used the fact that there is no relation such as $h=v_1+v_2$, i.e., that $v_1, v_2$ and $h$ are independent.
\end{proof}

The goal of this subsection is the following theorem.
\begin{thm}
\label{thm:nonord_transversal}
The divisors $\widetilde{\H}$ and $T$ meet generically transversally in $\overline{\B/\Gamma}^{\tor}$.
\end{thm}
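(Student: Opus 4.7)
The strategy is to deduce generic transversality in the unordered case from the full transversality in the ordered case (Theorem \ref{thm:normal_crossing_ord}) via the finite quotient map
\[q:\overline{\B^5/\Gamma_{\ord}}^{\tor}\longrightarrow \overline{\B^5/\Gamma}^{\tor}\]
induced by $\Gamma/\Gamma_{\ord}\cong\SS_8$. Since $\widetilde{\H_{\ord}}$ and $T_{\ord}$ are $\SS_8$-invariant with images $\widetilde{\H}$ and $T$, we have $q^{-1}(\widetilde{\H}\cap T)=\widetilde{\H_{\ord}}\cap T_{\ord}$. The plan is to show that at a generic point $p$ of an irreducible component of $\widetilde{\H_{\ord}}(\ell)\cap T_{\ord,i}$, the stabiliser in $\SS_8$ is trivial; then $q$ is étale at $p$, and the normal crossing transversality of Theorem \ref{thm:normal_crossing_ord} descends to transversality of $\widetilde{\H}$ and $T$ at $\bar p\defeq q(p)$.

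To carry out this reduction I would fix an isotropic vector $h\in A_L$ with associated boundary component $T_{\ord,i}\cong\P^2\times\P^2$ (Lemma \ref{lem:boundary}). By Lemma \ref{lem:finite_quadratic_form}, the components of $\widetilde{\H_{\ord}}$ meeting $T_{\ord,i}$ in codimension one are in bijection with the $12$ non-isotropic vectors $\ell\in h^{\perp}$, and $\Stab_{\SS_8}(T_{\ord,i})=\Stab_{\SS_8}(h)$ permutes them transitively. Any element of $\SS_8$ fixing $p$ must preserve both $T_{\ord,i}$ and the unique discriminant component $\widetilde{\H_{\ord}}(\ell)$ through $p$, so it lies in $G_{h,\ell}\defeq\Stab_{\SS_8}(h)\cap\Stab_{\SS_8}(\ell)$. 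Using the explicit description of the action of $\U(D_4^{\oplus 2})$ on $(E_{\sqrt{-1}})^4$ from the proof of Lemma \ref{lem:boundary}, together with the induced action on $T_{\ord,i}\cong\P^2\times\P^2$, I would show by a dimension count that the fixed locus of any non-trivial element of $G_{h,\ell}$ has strictly smaller dimension than the intersection $\widetilde{\H_{\ord}}(\ell)\cap T_{\ord,i}$, so that a generic point of this intersection has trivial stabiliser.

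The main obstacle is precisely this last fixed-locus analysis: one has to go through the finite list of non-trivial elements of $G_{h,\ell}$ and verify that none acts trivially on an entire component of the codimension-one subvariety $\widetilde{\H_{\ord}}(\ell)\cap T_{\ord,i}$ of the fourfold $\P^2\times \P^2$. The structure $T\cong (\P^2/\SS_4\times\P^2/\SS_4)/\SS_2$, combined with the explicit table of $\widetilde{\U}(D_4)$ in \cite[Table 2]{Do08}, reduces this to a finite, if slightly tedious, case-by-case check. Once the generic stabiliser is shown to be trivial, $q$ is étale at $p$, so Theorem \ref{thm:normal_crossing_ord} gives smoothness of $\widetilde{\H_{\ord}}(\ell)$ and $T_{\ord,i}$ at $p$ together with transversality of their intersection; étaleness of $q$ at $p$ transports this structure verbatim to $\bar p$. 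As $\bar p$ was generic in an arbitrary component of $\widetilde{\H}\cap T$, this yields the desired generic transversality.
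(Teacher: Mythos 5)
Your proposal rests on the claim that a generic point $p$ of a component $\widetilde{\H_{\ord}}(\ell)\cap T_{\ord,i}$ has trivial stabiliser in $\SS_8\cong\Gamma/\Gamma_{\ord}$, so that the quotient map $q$ is \'etale at $p$. This claim is false, and the error is exactly at the ``dimension count'' you propose: the reflection $\sigma_\ell$ associated to the non-isotropic vector $\ell$ lies in $\Gamma\setminus\Gamma_{\ord}$ (cf.\ Remark~\ref{rem:ram}) and fixes the special divisor $H(\ell)$ pointwise; its fixed locus therefore \emph{contains} the entire component $\widetilde{\H_{\ord}}(\ell)\cap T_{\ord,i}$ rather than having strictly smaller dimension. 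Consequently the generic stabiliser is at least $\Z/2\Z$, and $q$ is ramified of order $2$ along $\widetilde{\H_{\ord}}$, not \'etale. An argument built on \'etaleness at $p$ cannot get off the ground.

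The paper handles precisely this issue. It computes that the generic stabiliser of a point $p\in\mathcal{T}\subset\widetilde{\H_{\ord}}\cap T_{\ord,i}$ is $\Z/2\Z$, generated by the reflection $\sigma_\ell$, which in the local analytic coordinates $(t,z_1,z_2,z_3,z_4)$ (with $T_{\ord}=(t=0)$ and $\widetilde{\H_{\ord}}=(z_1=0)$) acts as $(t,z_1,z_2,z_3,z_4)\mapsto(t,-z_1,z_2,z_3,z_4)$. One must then check explicitly that the transversality of $(t=0)$ and $(z_1=0)$ descends through the $\Z/2$-quotient: in the invariant coordinate $w_1=z_1^2$, the images are $(t=0)$ and $(w_1=0)$, which do meet transversally. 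The additional work needed (beyond establishing transitivity via Lemma~\ref{lem:finite_quadratic_form} and identifying the generic stabiliser) is precisely this local analysis showing that the branching along $\widetilde{\H_{\ord}}$ is compatible with transversality; this is the content missing from, and contradicted by, your proposal.
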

\begin{proof}
First, we take an irreducible component $T_{\ord,i}$ of $T_{\ord}$, namely the divisor over the cusp corresponding to the isotropic vector $h=(1,1,0,0,0,0)$.  
Then, we choose the component of $\widetilde{\H_{\ord}}\cap T_{\ord,i}$ given by taking the divisor orthogonal to the vector $\ell=(0,0,1,1,0,0)$. We can perform both choices without loss of generality due to 
Lemma \ref{lem:finite_quadratic_form}, which tells us that the group $\SS_8$ acts transitively on the components of $\widetilde{\H_{\ord}}\cap T_{\ord,i}$.

Thus, it suffices to consider the component $\mathcal{T}$ of $\widetilde{\H_{\ord}}\cap T_{\ord,i}$ chosen above. 
Now, $\mathcal{T}$ is the fixed locus of the reflection with respect to $\ell$.
In addition, through the isomorphism $\Gamma/\Gamma_{\ord}\cong\SS_8\cong\O(\mathbb{F}_2^6)$, the choice of $\ell$ implies that this reflection acts on $\P^2\times\P^2$ by \begin{align*}
\P^2\times\P^2&\to\P^2\times\P^2\\
    ([a_1:b_1:c_1],[a_2:b_2:c_2])&\mapsto ([b_1:a_1:c_1],[a_2:b_2:c_2]).
\end{align*}
Also, a straightforward computation shows that  $\mathcal{T}$ is not fixed by any other reflection with respect to a non-isotropic vector set-theoretically. 
Hence, we consider a general point $p=(p_1,p_2)\in\mathcal{T}\subset\P^2\times\P^2$, where general means the following: the point $p_1=[1:1:c] \in \P^2$ satisfies $\Stab_{\SS_4}(p_1)=\l(1\ 2)\r$, where $(1\ 2)$ denotes the transposition in $\SS_4$ of the first two components, 
and $p_2$ is general in the sense that $p_1\neq p_2$ and $\Stab_{\SS_4}(p_2)=1$.
Clearly, the set of these points is non-empty.
Here, we have used the fact that $\SS_4$ acts on $\P^2$ by the standard representation; see the proof of Lemma  \ref{lem:boundary} and  \cite[Subsection 3.2]{Do08}.
By construction, the stabilizer of $p$ is isomorphic to $\Z/2\Z$, generated by a non-trivial involution in the first factor of $\SS_4\times\SS_4$.

Using the coordinates taken in the proof of Lemma \ref{lem:boundary}, 
by Theorem \ref{thm:normal_crossing_ord}, taking the quotients, we can choose the defining equation of $T_{\ord}$ (resp. $\widetilde{\H_{\ord}}$) as $(t=0)$ (resp. $(z_1=0)$).
Then, the non-trivial involution in $\Stab(p)$ acts on $p$ as $(t,z_1,z_2,z_3,z_4)\mapsto (t,-z_1,z_2,z_3,z_4)$.
Hence, we obtain the new coordinates $(t, w_1, z_2, z_3, z_4)$ of $\overline{\B^5/\Gamma}^{\tor}$, where $w_1=z_1^2$.
Therefore, the divisors $T$ and $\widetilde{\H}$, defined by $(t=0)$ and $(w_1=0)$ respectively, meet transversally.
\end{proof}

\subsection{Proof of Theorem \ref{mainthm:extendability}}
We shall now restate one of the main results in this paper. Its proof uses our computation of the Betti numbers of the Kirwan blow-up $\M^{\K}$ and the  toroidal compactification $\overline{\B^5/\Gamma}^{\tor}$
which we will perform in Section \ref{sec:Bettinumbers}.
\begin{thm}
\label{thm:not_extend}
Neither the Deligne-Mostow isomorphism $\phi:\M^{\GIT}\to\overline{\B^5/\Gamma}^{\BB}$ nor its inverse $\phi^{-1}$ lift to a morphism between the Kirwan blow-up $\M^{\K}$ and the unique toroidal compactification 
$\overline{\B^5/\Gamma}^{\tor}$.
\end{thm}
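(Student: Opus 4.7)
The plan is to argue by contradiction: assuming that $g:\M^{\K}\to\overline{\B^5/\Gamma}^{\tor}$ extends to a morphism, I will show that any such extension must in fact be an isomorphism, and then obtain a contradiction from the two transversality results proved in Subsections~\ref{non-transversality} and~\ref{transversality}. The case of $\phi^{-1}$ follows by a symmetric argument, since any extension of $\phi^{-1}$ would be the inverse of an extension of $\phi$; hence ruling out one direction rules out both.

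Suppose $g$ extends. By the commutativity of the diagram together with properness of $g$ (both source and target are projective), the morphism is surjective with $g(\Delta)=T$. In particular $g|_{\Delta}:\Delta\to T$ is a surjective morphism between irreducible projective varieties of the same dimension $4$, so $g$ does not contract the divisor $\Delta$, i.e.\ $g$ is an isomorphism in codimension one. Both $\M^{\K}$ and $\overline{\B^5/\Gamma}^{\tor}$ have only finite quotient singularities and are therefore $\Q$-factorial projective; the standard fact that a small proper birational morphism between $\Q$-factorial projective varieties is an isomorphism then implies that $g$ is an isomorphism. (For such a $g$, smallness forces every $\Q$-Cartier divisor on the source to be the pullback of its pushforward, so the intersection number of the pullback of any ample class with a contracted curve would vanish; no contracted curves can therefore exist.) As an additional consistency check, Theorems~\ref{thm:coh_M^K} and~\ref{thm:coh_tor} give $b_2(\M^{\K})=b_2(\overline{\B^5/\Gamma}^{\tor})=2$, so the Picard ranks indeed agree, leaving no room for $g$ to contract anything.

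Once $g$ is an isomorphism, the contradiction is immediate. Since the Deligne-Mostow isomorphism $\phi$ identifies the discriminant divisor $\D$ with $\H/\Gamma$ and sends the unique polystable point $c_{4,4}$ to the cusp $\xi$, the isomorphism $g$ sends the pair $(\widetilde{\D},\Delta)$ to $(\widetilde{\H},T)$, and any isomorphism preserves generic transversality of divisor intersections. However, Theorem~\ref{thm:nonord_nontransversal} shows that $\widetilde{\D}$ and $\Delta$ do not meet generically transversally in $\M^{\K}$, whereas Theorem~\ref{thm:nonord_transversal} shows that $\widetilde{\H}$ and $T$ do meet generically transversally in $\overline{\B^5/\Gamma}^{\tor}$, which is the required contradiction. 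The main technical obstacle of the overall proof is not this concluding combination, but rather the Luna slice calculation of Subsection~\ref{non-transversality}, which requires careful bookkeeping of the action of the connected component $R^{\circ}\cong\C^{\times}$ together with the finite quotient $R/R^{\circ}\cong\SS_2$ across the three affine charts $\PP,\QQ,\RR$ of the Kirwan blow-up.
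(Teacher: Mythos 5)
Your proof is correct, but it takes a genuinely different route from the paper's. The paper establishes that $g$ cannot contract a divisor by appealing to the Betti number computations of Section~\ref{sec:Bettinumbers} (Theorems~\ref{thm:coh_M^K} and~\ref{thm:coh_tor}, giving $b_2=2$ on both sides), whereas you instead argue geometrically: since $g$ is an isomorphism off $\Delta$ and, by commutativity, $g(\Delta)\subseteq T$, surjectivity forces $g(\Delta)=T$, and as $\Delta$ and $T$ are both irreducible of dimension $4$, $g$ cannot contract $\Delta$; hence $g$ contracts no divisor at all. This makes the extendability argument logically independent of the cohomology computations, which is a real simplification (the paper presumably uses the Betti numbers here because it proves them anyway). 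The two proofs also organize the deductions differently: the paper first rules out that $g$ is an isomorphism (via transversality), then that it is a divisorial contraction (via Betti numbers), then that it is a small contraction (via $\Q$-factoriality), arriving at a contradiction with the assumption that $g$ is a morphism; you first conclude that any extension $g$ must be an isomorphism (geometric argument plus $\Q$-factoriality) and then contradict this with the transversality dichotomy. Both are valid. Two small points to tighten. First, your stated justification for the $\phi^{-1}$ direction --- ``any extension of $\phi^{-1}$ would be the inverse of an extension of $\phi$'' --- is not a priori correct, since the inverse of a non-isomorphic morphism is not a morphism; however, the symmetric argument you allude to does apply directly (if $h$ extends $\phi^{-1}$ then $h(T)=\Delta$, $h$ contracts no divisor, $h$ is an isomorphism, contradiction), so the conclusion stands with the reasoning rephrased. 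Second, you should cite or briefly justify irreducibility of $\Delta$ (it is the GIT quotient of the projectivized normal bundle over the single closed orbit $G\cdot\{c_{4,4}\}$, restricted to the semistable locus, which is dense by Lemma~\ref{lem:unstable_locus}) and of $T$ (recorded at the end of Section~\ref{section:preparation}), since both are used in your dimension count.
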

\begin{proof}
We shall prove this for $\phi$, the argument for $\phi^{-1}$ being the same. 
By Theorem \ref{thm:nonord_nontransversal} and Theorem  \ref{thm:nonord_transversal}, the birational map $g:\M^{\K}\dashrightarrow \overline{\B^5/\Gamma}^{\tor}$ cannot be an isomorphism.
By Theorems \ref{thm:coh_M^K} and \ref{thm:coh_tor} the Betti numbers $b_2(\M^{\K})=b_2( \overline{\B^5/\Gamma}^{\tor})=2$ agree. 
Hence $g$ cannot contract a divisor and must thus be a small contraction.
This, however, contradicts the fact that both $\M^{\K}$ and  $\overline{\B^5/\Gamma}^{\tor}$ are $\Q$-factorial.
(See also the proof of \cite[Theorem 1.1]{CMGHL24}).
\end{proof}
Since the compactifications concerned are $\SS_8$-equivariant, we obtain as a byproduct that $\M_{\ord}^{\K}/\SS_8\not\cong \M^{\K}$.

\section{Canonical bundles and relation to the minimal model program}
\label{section:canonical_bundles}
On the way, we shall use a modular form constructed by Kond\={o}, which will be essential for us.
In this section, we focus on the canonical bundles, and as a result, we shall show Theorem \ref{mainthm:not_K_equiv}.
\subsection{Computation involving blow-ups}
We first recall some basic facts about the birational geometry of the relevant moduli spaces and, noticeably, the maps $\varphi_1$ and $\varphi_2$.
These two morphisms  $\varphi_1:\M_{\ord}^{\K}\to\M_{\ord}^{\GIT}$ and $\varphi_2:\overline{\M}_{0,8}\to\M^{\K}_{\ord}$ are the blow-ups given by the reduction of the weights \cite[Theorem 4.1]{Ha03}; see Figure \ref{com_diag} and Subsection \ref{transversality}.
From \cite[Lemma 5.3]{KM11}, in their $\Q$-Picard groups, we obtain 
\begin{align}
\label{eq:pull_back}
    \varphi_1^{*}(D_2^{(0)})&=D_2^{(1)}+6D_4^{(1)}\\
    &=\widetilde{\D_{\ord}}+6\Delta_{\ord}\notag
\end{align}
where we simply relabel the divisors in the second equality. Using the isomorphism $\M_{\ord}^{\K} \cong \B^5/\Gamma_{\ord}$ this, of course, implies 
\begin{align}
\label{eq:pull_back_toroidal}
    \pi_{\ord}^{*}(\H_{\ord})=\widetilde{\H_{\ord}}+6T_{\ord}.
\end{align}
We  also note that \cite[Lemma 5.3]{KM11} implies that
\begin{equation}
\label{equ:pullbackphi2D21}
\varphi_2^{*}(D_2^{(1)})=D_2^{(2)}+3D_3^{(2)},\quad \varphi_2^{*}(D_4^{(1)})=D_4^{(2).}
\end{equation}
For the sake of completeness, we also remark that
\[\varphi_{2*}(D_2^{(2)})=D_2^{(1)}=\widetilde{\D_{\ord}},\quad \varphi_{2*}(D_3^{(2)})=0,\quad \varphi_{2*}(D_4^{(2)})=D_4^{(1)}=T_{\ord},\]
\[\varphi_{1*}(D_2^{(1)})=D_2^{(0)}=\D_{\ord},\quad  \varphi_{1*}(D_4^{(1)})=0,\]
The pushforward formulae are not used in this paper.
All of these equalities hold in the relevant $\Q$-Picard groups.

Moreover, the canonical divisors are described as 
\begin{align}\label{eq:can_bindle_M^K_{ord}}
 K_{\overline{\M}_{0,8}}&=-\frac{2}{7}D^{(2)}_2+\frac{1}{7}D^{(2)}_3+\frac{2}{7}D^{(2)}_4\notag\\
  K_{\M_{\ord}^{\K}}&=-\frac{2}{7}D^{(1)}_2+\frac{2}{7}D^{(1)}_4\\
    K_{\M_{\ord}^{\GIT}}&=-\frac{2}{7}D^{(0)}_2\notag
\end{align}
where the number $7$ in the denominators comes from $n-1$ in \cite[Proposition 5.4, Lemma 5.5]{KM11}.
Combining (\ref{eq:pull_back}), (\ref{equ:pullbackphi2D21}) and (\ref{eq:can_bindle_M^K_{ord}}) it follows that 
\begin{align*}
    K_{\M^{\K}_{\ord}}&=\varphi_1^{*}(K_{\M^{\GIT}_{\ord}})+2D_4^{(1)}\\
     K_{\overline{\M}_{0,8}}&=\varphi_2^{*}(K_{\M^{\K}_{\ord}})+D_3^{(2)}.
\end{align*}

In addition,  there is a modular form $F$ of weight 14 on $\B^5$ vanishing exactly on $H$ \cite[Theorem 6.2]{Ko07a} with vanishing order 1.
It follows that $\div(F) = H$ on $\B^5$.
Since the quotient map $\B^5\to\B^5/\Gamma_{\ord}$ ramifies along only $H$ with index 2, the construction of $\L_{\ord}$ by Baily and Borel implies that the above relation decends to 
\begin{equation}\label{eq:Kondoform}
14\L_{\ord}=\frac{1}{2}\H_{\ord}
\end{equation}
in $\Pic(\overline{\B^5/\Gamma_{\ord}}^{\BB})\otimes\Q$.
Note that the factor $1/2$ comes from the ramification index along $\H_{\ord}$ arising from the action of the arithmetic subgroup $\Gamma_{\ord}$.
Here $\L_{\ord}$ denotes the automorphic line bundle of weight 1.
By (standard) abuse of notation, we use the same notation for this line bundle on both the Baily-Borel and toroidal compactifications.
Thus, 
\begin{align*}
    K_{\overline{\B^5/\Gamma_{\ord}}^{\BB}}&=-\frac{2}{7}\D_{\ord}\\
    &=-8\L_{\ord}.
\end{align*}

Now, we compute the canonical bundles of $\overline{\B^5/\Gamma_{\ord}}^{\tor}\cong \M^{\K}_{\ord}$ in two ways: the realization as a ball quotient and the blow-up sequence.
\begin{rem}
\label{rem:ram}
The finite map $\B^5\to\B^5/\Gamma_{\ord}$ (resp. $\B^5/\Gamma_{\ord}\to\B^5/\Gamma$) branches along $H/\Gamma_{\ord}$ (resp. $H/\Gamma$) with branch index 2
Here we give a sketch of the proof. 
First, for $r\in L$ let 
\[\sigma_{\ell,\zeta}(r)\defeq r+(1-\zeta)\frac{\l\ell, r\r}{2}{\ell}\in L\otimes\Q(\sqrt{-1})\]
where $\ell\in L$ is a $(-2)$-vector and  $\zeta\in\{-1,\sqrt{-1}\}$. 
By \cite[Corollary 3 (ii)]{Be12} every quasi-reflection comes from such an automorphism. 
A straightforward calculation shows $\sigma_{\ell,-1}\in\Gamma_{\ord}$ and $\sigma_{\ell,\sqrt{-1}}\in\Gamma\setminus\Gamma_{\ord}$.
This implies that the ramification by the finite group $\Gamma/\Gamma_{\ord}$ is induced by an order 2 element $\sigma_{\ell,\sqrt{-1}}$, and hence the ramification index of the map
$\B^5\to\B^5/\Gamma_{\ord}$ is 2. The claim for $\B^5/\Gamma_{\ord}\to\B^5/\Gamma$ also follows from this consideration.
\end{rem}
On the one hand, 
by Remark \ref{rem:ram}, a standard application of Hirzebruch's proportionality principle \cite{Mu77} gives
\begin{alignat*}{2}
    K_{\overline{\B^5/\Gamma_{\ord}}^{\tor}}&=6\L_{\ord}-\frac{1}{2}\widetilde{\H_{\ord}}-T_{\ord}\\
        &=6\L_{\ord}-\frac{1}{2}\left\{\pi_{\ord}^{*}(\H_{\ord})-6T_{\ord}\right\}-T_{\ord}&\quad (\mathrm{by}\ (\ref{eq:pull_back_toroidal}))\\
    &=-8\L_{\ord}+2T_{\ord}&\quad (\mathrm{by}\ (\ref{eq:Kondoform})).
\end{alignat*}

On the other hand, 
\begin{alignat*}{2}
   K_{\M^{\K}_{\ord}}
    &=-\frac{2}{7}\widetilde{\D_{\ord}}+\frac{2}{7}\Delta_{\ord}\quad&(\mathrm{by}\ (\ref{eq:can_bindle_M^K_{ord}}))\\
    &=-\frac{2}{7}\left\{\varphi_1^{*}(\D_{\ord})-6\Delta_{\ord}\right\}+\frac{2}{7}\Delta_{\ord}\quad&(\mathrm{by}\ (\ref{eq:pull_back}))\\
    &=-\frac{2}{7}\varphi_1^{*}\phi_{\ord}^{*}(\H_{\ord})+2\Delta_{\ord}\\
    &=-8\varphi_1^{*}\phi_{\ord}^{*}(\L_{\ord})+2\Delta_{\ord}\quad& (\mathrm{by}\ (\ref{eq:Kondoform}))\\
    &=\tau^{*}(-8\L_{\ord}+2T_{\ord})\quad &(\mathrm{by\ Figure}\ \ref{com_diag}),
\end{alignat*}
for $\tau\defeq\Phi_{\frac{1}{4}+\epsilon}\circ\phi_{\ord}$.
Thus, this calculation recovers the fact $K_{\M^{\K}_{\ord}}=\tau^{*}(K_{\overline{\B^5/\Gamma_{\ord}}^{\tor}})$ under the isomorphism $\tau:\M^{\K}_{\ord}\cong\overline{\B^5/\Gamma_{\ord}}^{\tor}$.

\begin{rem}
The above modular form constructed by Kond\={o} is a ``special reflective modular form" in the sense of \cite[Assumption 2.1]{MO23}.
Hence, both $\M_{\ord}^{\GIT}$ and $\M^{\GIT}$ are Fano varieties from the above computation or \cite[Theorem 2.4]{MO23}.
\end{rem}

Now, we need the description of normal bundles along the toroidal boundary. For this we recall from Lemma \ref{lem:boundary} that $T_{\ord, i}{\cong \P^2 \times \P^2}$.
\begin{prop}
\label{prop:normal_bundles}
The normal bundle of $T_{\ord, i}{\cong \P^2 \times \P^2}$ in $\overline{\B^5/\Gamma}^{\tor}$ is given by
\[\mathcal{N}_{T_{\ord, i}/\overline{\B^5/\Gamma}^{\tor}}=\OO(-1,-1).\]
\end{prop}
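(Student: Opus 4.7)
The plan is to pull two identities back to $T_{\ord,i}$ and solve them in $\Pic(\P^{2}\times\P^{2})$: the adjunction formula applied to the canonical bundle identity derived in this section, and Kond\={o}'s relation $14\L_{\ord}=\tfrac12\H_{\ord}$ pulled through the toroidal blow-up. The normal bundle is then determined algebraically from these two linear equations.

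First I would exploit that $\overline{\B^{5}/\Gamma_{\ord}}^{\BB}$ has only $0$-dimensional cusps, so the components $T_{\ord,j}$ lie over pairwise distinct Baily-Borel points and are disjoint. Hence $T_{\ord}|_{T_{\ord,i}}=T_{\ord,i}|_{T_{\ord,i}}$ is exactly the normal bundle $\mathcal{N}\defeq\mathcal{N}_{T_{\ord,i}/\overline{\B^{5}/\Gamma_{\ord}}^{\tor}}$. Restricting the identity $K_{\overline{\B^{5}/\Gamma_{\ord}}^{\tor}}=-8\L_{\ord}+2T_{\ord}$ and applying adjunction, combined with $T_{\ord,i}\cong\P^{2}\times\P^{2}$ from Lemma \ref{lem:boundary}, gives
\[ \OO(-3,-3) \;=\; K_{T_{\ord,i}} \;=\; -8\,\L_{\ord}|_{T_{\ord,i}} + 3\,\mathcal{N}. \]
For the second input, combining Kond\={o}'s identity with $\pi_{\ord}^{*}(\H_{\ord})=\widetilde{\H_{\ord}}+6T_{\ord}$ yields $14\L_{\ord}=\tfrac12\widetilde{\H_{\ord}}+3T_{\ord}$ on the toroidal compactification, and restriction to $T_{\ord,i}$ produces
\[ 14\,\L_{\ord}|_{T_{\ord,i}} \;=\; \tfrac12\,\widetilde{\H_{\ord}}|_{T_{\ord,i}} + 3\,\mathcal{N}. \]

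The crucial geometric input is the bidegree of the divisor $\widetilde{\H_{\ord}}|_{T_{\ord,i}}$. By Theorem \ref{thm:normal_crossing_ord} this is a reduced simple normal crossing divisor on $\P^{2}\times\P^{2}$, and by Lemma \ref{lem:finite_quadratic_form} its irreducible components are in bijection with the $12$ non-isotropic vectors of $h^{\perp}\subset A_{L}$. Each such $\ell$ gives, as recorded in the proof of Theorem \ref{thm:nonord_transversal}, a reflection in one of the two $\SS_{4}$ factors of $\Stab_{\SS_{8}}(T_{\ord,i})$ acting on $\P^{2}\times\P^{2}$ via the standard $3$-dimensional representation. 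The fixed locus of a transposition in this representation of $\SS_{4}$ on $\P^{2}$ is a hyperplane $\P^{1}\subset\P^{2}$, and since each factor contains exactly six transpositions, I get six components of bidegree $(1,0)$ and six of bidegree $(0,1)$, whence $\widetilde{\H_{\ord}}|_{T_{\ord,i}}=\OO(6,6)$.

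Plugging $\tfrac12\widetilde{\H_{\ord}}|_{T_{\ord,i}}=\OO(3,3)$ into the two displayed equations makes the $\mathcal{N}$-contribution cancel upon eliminating it, and forces $6\,\L_{\ord}|_{T_{\ord,i}}=0$ in $\Pic(\P^{2}\times\P^{2})$; torsion-freeness then yields $\L_{\ord}|_{T_{\ord,i}}=\OO$, which is also the expected behaviour of the automorphic line bundle along a toroidal boundary over a $0$-dimensional cusp. Either equation now gives $3\,\mathcal{N}=\OO(-3,-3)$, hence $\mathcal{N}=\OO(-1,-1)$. The main subtlety to be checked is that only simple transpositions (and not the double transpositions of $\SS_{4}$ or the $\SS_{2}$-swap of the two factors) contribute divisorial components to $\widetilde{\H_{\ord}}|_{T_{\ord,i}}$: a double transposition has a one-dimensional $(+1)$-eigenspace on the standard representation, so its fixed locus in $\P^{2}$ is a single point, while the swap of factors fixes only the diagonal $\P^{2}\subset\P^{2}\times\P^{2}$, of codimension two. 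Thus exactly the twelve transpositions enumerated above account for $\widetilde{\H_{\ord}}|_{T_{\ord,i}}$, and the proposition follows.
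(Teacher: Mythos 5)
Your argument is correct and leads to the right answer, and it shares the paper's main engine (adjunction applied to $K_{\overline{\B^5/\Gamma_{\ord}}^{\tor}}=-8\L_{\ord}+2T_{\ord}$ on $T_{\ord,i}\cong\P^2\times\P^2$), but it takes a more self-contained detour. The paper's proof silently invokes the standard fact that the automorphic line bundle restricts trivially to a toroidal boundary divisor over a $0$-dimensional cusp, i.e.\ $\L_{\ord}\vert_{T_{\ord,i}}=\OO$; combined with $T_{\ord}\vert_{T_{\ord,i}}=T_{\ord,i}\vert_{T_{\ord,i}}$ (disjointness of boundary components), the adjunction identity immediately gives $3\mathcal{N}=\OO(-3,-3)$. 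You instead treat $\L_{\ord}\vert_{T_{\ord,i}}$ as an unknown, pull Kond\={o}'s relation $14\L_{\ord}=\frac12\H_{\ord}$ through $\pi_{\ord}^*(\H_{\ord})=\widetilde{\H_{\ord}}+6T_{\ord}$ to get a second linear equation, and then pin down the bidegree $\widetilde{\H_{\ord}}\vert_{T_{\ord,i}}=\OO(6,6)$ by counting the twelve non-isotropic vectors of $h^{\perp}$ from Lemma \ref{lem:finite_quadratic_form}, identifying each with a transposition of one $\SS_4$-factor whose fixed locus is a hyperplane of bidegree $(1,0)$ or $(0,1)$. Eliminating $\mathcal{N}$ forces $6\L_{\ord}\vert_{T_{\ord,i}}=0$ and hence $\L_{\ord}\vert_{T_{\ord,i}}=\OO$ by torsion-freeness of $\Pic(\P^2\times\P^2)$. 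This buys you an independent, internal verification of the triviality of $\L_{\ord}$ on the boundary at the modest cost of the $\OO(6,6)$ count; in exchange, that count should really carry a short justification that the strict transforms $\widetilde{\H_{\ord,v}}$ for $v\perp h$ each meet $T_{\ord,i}$ in a single reduced component equal to the relevant fixed hyperplane (irreducibility plus the transversality of Theorem \ref{thm:normal_crossing_ord} supply this). The two derivations are consistent and either suffices; yours is a pleasant cross-check of the paper's implicit input.
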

\begin{proof}
First, we obtain
\[(K_{\overline{\B^5/\Gamma_{\ord}}^{\tor}}+T_{\ord,i})\vert_{T_{\ord,i}}=(-8\L_{\ord}+2T_{\ord}+T_{\ord,i})\vert_{T_{\ord,i}}.\]
The left-hand side gives
\begin{align*}
    (K_{\overline{\B^5/\Gamma_{\ord}}^{\tor}}+T_{\ord,i})\vert_{T_{\ord,i}}&=K_{T_{\ord,i}}\\
    &=\OO(-3,-3)
\end{align*}
by the adjunction formula.
On the other hand, the right-hand side is
\begin{align*}
    (-8\L_{\ord}+2T_{\ord}+T_{\ord,i})\vert_{T_{\ord,i}}
    &= 3T_{\ord,i}\vert_{T_{\ord,i}}\\
    &=3\mathcal{N}_{T_{\ord, i}/\overline{\B^5/\Gamma}^{\tor}}.
\end{align*}

Here we use that $\L_{\ord}\vert_{T_{\ord},i} = 0$. For this, we recall that the automorphic $\Q$-line bundle $\L_{\ord}$ on $\overline{\B^5/\Gamma_{\ord}}^{\tor}$ is defined as 
the pullback of $\L_{\ord}$ on $\overline{\B^5/\Gamma_{\ord}}^{\BB}$ (by a standard abuse of notation both line bundles are denoted by the same symbol)  and hence is trivial along the exceptional divisors for the blow-up $\pi_{\ord}:\overline{\B^5/\Gamma_{\ord}}^{\tor}\to\overline{\B^5/\Gamma_{\ord}}^{\BB}$.
This completes the proof.
\end{proof}

\begin{rem}
\label{rem:cross_ratio}
This is an analogue of Naruki's result \cite[Proposition 12.1]{Na82} on the moduli spaces of cubic surfaces.
He constructed a cross ratio variety and analysed its singularity at the boundary.
Later, Gallardo-Kerr-Schaffler \cite[Theorem 1.4]{GKS21} showed that the toroidal compactification and Naruki's compactification are isomorphic and Casalaina-Martin-Grushevsky-Hulek-Laza \cite[Theorem 1.2]{CMGHL24} used this to compute the top self-intersection number of the canonical bundles.
In the case of the moduli spaces of 8 points, there also exists the cross ratio variety constructed by \cite[Theorem 2.4]{FM11}, \cite[Theorem 7.2]{Ko07a} or \cite[Theorem 1.1]{MT04}.
However, these coincide with the Baily-Borel compactification $\overline{\B^5/\Gamma_{\ord}}^{\BB}$ of the ball quotient unlike the case of cubic surfaces.
This is why we used the results on the moduli spaces of stable curves in our case.
\end{rem}

Now, we study the behaviour of the boundary divisors along the finite covering $\overline{\B^5/\Gamma_{\ord}}^{\tor}\to\overline{\B^5/\Gamma}^{\tor}$.
We recall that the toroidal compactifications are constructed by taking a  ``partial compactification in the direction of each cusp" \cite[Section III. 5]{AMRT10}.
Here, this is done by choosing a polyhedral decomposition of a cone in the centre of the unipotent part of the stabilizer of a cusp (which is canonical in our case).
Hence, this group, which is denoted by $U(F)$ in \cite{AMRT10}, describes the toroidal boundary.
\begin{lem}
\label{lem:boundary_branch}
    The map  $\overline{\B^5/\Gamma_{\ord}}^{\tor}\to\overline{\B^5/\Gamma}^{\tor}$ does not branch along $T$.
\end{lem}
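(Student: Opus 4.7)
The plan is to analyse the Galois cover
\[\tilde\pi\colon \overline{\B^5/\Gamma_{\ord}}^{\tor}\to\overline{\B^5/\Gamma}^{\tor},\]
whose Galois group is $\Gamma/\Gamma_{\ord}\cong\SS_8$, and to prove that for every irreducible component $T_{\ord,i}$ of the toroidal boundary the generic inertia subgroup in $\SS_8$ is trivial. Since for a Galois cover of normal varieties the ramification index along an irreducible divisor coincides with the order of its generic inertia group, this will yield the desired statement that $\tilde\pi$ is unramified in codimension one along $T$.

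For any point $p\in T_{\ord,i}$ only the component $T_{\ord,i}$ of $T_{\ord}$ passes through $p$, because the components $T_{\ord,j}$ are fibred over distinct cusps of $\overline{\B^5/\Gamma_{\ord}}^{\BB}$ and are therefore pairwise disjoint. Hence any $g\in\Stab_{\SS_8}(p)$ must preserve $T_{\ord,i}$ set-wise, so the generic inertia of $T_{\ord,i}$ in $\SS_8$ is contained in $\Stab_{\SS_8}(T_{\ord,i})\cong(\SS_4\times\SS_4)\rtimes\SS_2$ recalled in Subsection \ref{transversality}. By Lemma \ref{lem:boundary} and its proof, this group acts on $T_{\ord,i}\cong\P^2\times\P^2$ via the standard $3$-dimensional representation of $\SS_4$ on each factor, with the outer $\SS_2$ swapping the two copies.

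The main step is now to observe that this product-swap action is faithful. The standard representation of $\SS_4$ is a faithful $3$-dimensional representation, and since $Z(\SS_4)$ is trivial no non-identity element acts on it by a scalar; hence the induced action of $\SS_4$ on $\P^2$ is already faithful. A routine check then shows that $(\SS_4\times\SS_4)\rtimes\SS_2$ acts faithfully on $\P^2\times\P^2$. Any finite faithful action on an irreducible variety is free over a non-empty Zariski open subset, so the generic stabiliser of $T_{\ord,i}$ in $\SS_8$ is trivial, and the ramification index of $\tilde\pi$ along $T_{\ord,i}$ equals one.

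The point that might initially look like the main obstacle, but is in fact bypassed by the argument above, is the possible contribution from the normal direction to $T_{\ord,i}$: a priori one could worry that an element of $\Stab_{\SS_8}(T_{\ord,i})$ might act trivially on $T_{\ord,i}$ but non-trivially on its normal bundle, producing a pseudo-reflection and hence ramification. The faithfulness of the action on $T_{\ord,i}$ rules this out immediately, so no local toroidal coordinate computation beyond that already contained in the proof of Lemma \ref{lem:boundary} is required.
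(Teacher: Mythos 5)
Your argument is correct, but it takes a genuinely different route from the paper's proof. The paper works directly at the level of the unipotent lattices: it takes the isotropic vector $h$ defining a cusp, writes down the centre $Z(F)$ of the unipotent radical of $\Stab_\Gamma(h)$ explicitly as a rank-one lattice, and checks that every generator acts trivially on the discriminant group $A_L$, so $Z(F)\subset\Gamma_{\ord}$; this shows $Z(F)_{\Gamma_{\ord}}=Z(F)_\Gamma$, and since the toroidal coordinate $t$ (the direction normal to $T$) is built from $Z(F)$, there is no branching. Your proof instead bounds the inertia group of $T_{\ord,i}$ inside $\Stab_{\SS_8}(T_{\ord,i})$ and uses the explicit description from Lemma \ref{lem:boundary} of the action of $(\SS_4\times\SS_4)\rtimes\SS_2$ on $T_{\ord,i}\cong\P^2\times\P^2$ (standard representation of $\SS_4$ on each factor, outer $\SS_2$ swapping) to conclude faithfulness and hence triviality of the inertia. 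Both arguments are sound; the paper's is arguably more self-contained and robust because it does not depend on identifying the representation of $\Stab_{\SS_8}(T_{\ord,i})$ on the boundary, whereas yours is more geometric and illuminates why the inertia is trivial at the level of the toroidal divisor itself. One small remark on your last paragraph: the pseudo-reflection worry is precisely the phenomenon the paper's proof excludes \emph{directly} by showing $Z(F)$ does not grow; your faithfulness argument rules it out indirectly but equally validly, since any inertia element by definition acts trivially on $T_{\ord,i}$.
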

\begin{proof}
We recall from Lemma \ref{lem:finite_quadratic_form} and its proof that
the quotient $\Gamma/\Gamma_{\ord}\cong\SS_8$ acts transitively on the set of toroidal boundary components $\left\{T_{\ord,i}\right\}_{i=1}^{35}$ , since its action on the set of isotropic vectors in $A_L$ is transitive.
Hence, it suffices to take one component $T_{\ord,i}$, corresponding to the following isotropic vector $h\in L$, and prove that the centre, denoted as $Z(F)$ in Lemma \ref{lem:boundary}, of the unipotent radical of $\Stab_{\Gamma}(h)$ and $\Stab_{\Gamma_{\ord}}(h)$ are equal.
Now, we choose an isotropic vector $h\defeq(1,0,0,0,0)\in U\oplus U(2)\oplus D_4(-1)^{\oplus 2}$.
Then, the corresponding centre of the unipotent part of $\Stab_{\Gamma}(h)$ is given by
\[\left\{\left(
\begin{array}{c|c|c}
1 &   &  \sqrt{-1}(1-\sqrt{-1})w\\\hline
 & I_{4}  &  \\\hline
 &  & 1
\end{array}
\right)\ \middle|\ w\in\Z\right\}.
\]
Then, one can check that each matrix of the above form acts trivially on the discriminant group (see Section \ref{section:preparation}) $A_L= L^{\vee}/L$, which is isomorphic to $\left(\OO_F/(1+\sqrt{-1})\OO_F\right)^6$.
In other words, the unipotent radicals of $\Stab_{\Gamma}(h)$ and $\Stab_{\Gamma_{\ord}}(h)$ are equal.
We finally remark that there are  
 no irregular cusps in the sense of \cite{Ma23}. These can only occur when the arithmetic group in question contains an element of the center in the full unitary group.    
In this case, however, the discriminant group is isomorphic to $\Z/2\Z$ (as $\Z$-module), and $-\id$ and $-\sqrt{-1}\id$ act trivially on the discriminant and are thus already contained in $\Gamma_{\ord}$.
Altogether, this proves the claim.
\end{proof}
On the one hand, in a similar way as \cite[Proposition 5.8]{CMGHL24}, it follows that
    \begin{align}
\label{nonord_tor_K}
K_{\overline{\B^5/\Gamma}^{\tor}}=\pi^{*}K_{\overline{\B^5/\Gamma}^{\BB}}+7T
\end{align}
by Lemma \ref{lem:boundary_branch}.
On the other hand, we can calculate the canonical bundle of $\M^{\K}$ by \cite[Lemma 6.4]{CMGHL24}, where a general approach to calculating the canonical bundle of Kirwan blow-ups was developed:
\begin{align}
\label{nonord_Kirwan_K}
    K_{\M^{\K}}=f^{*}K_{\M^{\GIT}}+5\Delta,
\end{align}
where $\Delta$ is the exceptional divisor of the blow-up $f:\M^{\K}\to\M^{\GIT}$.
Here, we apply the method \cite[Lemma 6.4]{CMGHL24} for our case $c=6$ (Lemma \ref{lem:Luna_slice}) and $|G_X|=|G_F|=2$ (Lemma \ref{lem:stabilizers}) in their notation.
Note that there is no divisorial locus having a strictly bigger stabilizer than $G_X$.

\subsection{Proof of Theorem \ref{mainthm:not_K_equiv}}

We can now prove that these two compactifications are not $K$-equivalent.
\begin{thm}
\label{thm:not_K_equiv}
The compactifications $\M^{\K}$ and $\overline{\B^5/\Gamma}^{\tor}$ are not $K$-equivalent.
\end{thm}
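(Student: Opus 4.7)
The plan is to use the standard fact that $K$-equivalent projective varieties have equal top self-intersection numbers of their canonical divisors, and then to derive a contradiction by establishing that
\[
K_{\M^{\K}}^{5} \neq K_{\overline{\B^5/\Gamma}^{\tor}}^{5}.
\]
Starting from (\ref{nonord_Kirwan_K}) and (\ref{nonord_tor_K}), I expand both sides binomially. Because $f$ contracts $\Delta$ to the single polystable point $c_{4,4}$, the restriction $(f^{*}K_{\M^{\GIT}})|_{\Delta}$ vanishes, so $(f^{*}K_{\M^{\GIT}}) \cdot \Delta = 0$ as a cycle class, killing all mixed terms $(f^{*}K_{\M^{\GIT}})^{j}\cdot\Delta^{5-j}$ for $1\le j\le 4$. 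The analogous vanishing holds on the toroidal side since $\pi$ contracts $T$ to the unique cusp. Combined with $(f^{*}K_{\M^{\GIT}})^{5} = K_{\M^{\GIT}}^{5} = K_{\overline{\B^5/\Gamma}^{\BB}}^{5} = (\pi^{*}K_{\overline{\B^5/\Gamma}^{\BB}})^{5}$, which is immediate from the Deligne--Mostow isomorphism $\phi$, the problem reduces to
\[
5^{5}\,\Delta^{5} \;\neq\; 7^{5}\,T^{5}.
\]

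To compute $T^{5}$, I pass to the finite cover $\mu: \overline{\B^5/\Gamma_{\ord}}^{\tor} \to \overline{\B^5/\Gamma}^{\tor}$ of degree $|\SS_{8}| = 40320$, which by Lemma \ref{lem:boundary_branch} is unramified along $T$. Hence $\mu^{*}T = T_{\ord} = \sum_{i=1}^{35} T_{\ord,i}$, whose components are pairwise disjoint as they lie over the $35$ distinct Baily--Borel cusps, so $T_{\ord}^{5} = \sum_{i} T_{\ord,i}^{5}$. By Lemma \ref{lem:boundary} each $T_{\ord,i} \cong \P^{2}\times\P^{2}$, and by Proposition \ref{prop:normal_bundles} its normal bundle is $\OO(-1,-1)$, whence
\[
T_{\ord,i}^{5} \;=\; \int_{\P^{2}\times\P^{2}} (h_{1}+h_{2})^{4} \;=\; \binom{4}{2} \;=\; 6.
\]
The projection formula then gives $T^{5} = (\mu^{*}T)^{5}/\deg\mu = 210/40320 = 1/192$, and therefore $7^{5}T^{5} = 16807/192$. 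In particular the $5$-adic valuation satisfies $v_{5}(7^{5}T^{5}) = 0$.

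For $\Delta^{5}$, rather than computing the number explicitly, I argue $5$-adically. Étale locally along $\Delta$, the space $\M^{\K}$ is the GIT quotient of the smooth variety $Bl_{0}\C^{6}$ by the reductive group $R=\C^{\times}\rtimes\SS_{2}$; on the semi-stable locus the $R$-stabilizers are finite, and by Proposition \ref{prop:e_not_divisible_by_5} their orders are coprime to $5$. Consequently $\M^{\K}$ is an orbifold along $\Delta$ whose isotropy groups have order coprime to $5$, and so any rational intersection number of $\Q$-Cartier divisors supported in a neighborhood of $\Delta$, in particular $\Delta^{5}$, lies in $\Z_{(5)}$. This gives $v_{5}(\Delta^{5}) \ge 0$, hence
\[
v_{5}\bigl(5^{5}\Delta^{5}\bigr) \;\ge\; 5 \;>\; 0 \;=\; v_{5}\bigl(7^{5}T^{5}\bigr),
\]
and the two sides cannot be equal. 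The main obstacle is the clean $5$-adic reformulation for $\Delta^{5}$: an explicit evaluation would require an equivariant computation on $\P^{5}$ with weights $(8,-8,6,-6,4,-4)$ followed by the $\SS_{2}$-quotient, whereas Proposition \ref{prop:e_not_divisible_by_5} bypasses this entirely, which is precisely why that technical lemma was established in Subsection \ref{non-transversality}.
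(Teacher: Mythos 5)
Your proposal is correct and follows essentially the same line as the paper's proof: reduce to $(5\Delta)^5 \neq (7T)^5$, compute $T^5 = 1/192$ by passing to the unramified $\SS_8$-cover and using $\mathcal{N}_{T_{\ord,i}} = \OO(-1,-1)$, and then rule out equality $5$-adically via Proposition \ref{prop:e_not_divisible_by_5}. The only cosmetic differences are that you spell out the binomial expansion and the vanishing of mixed terms (which the paper leaves implicit), and you sketch the orbifold reason why $\Delta^5 \in \Z_{(5)}$ instead of directly citing \cite[Proposition 6.10]{CMGHL22}, which is the precise statement the paper invokes; the content is the same.
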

\begin{proof}
It suffices to show that $K_{\M^{\K}}^5\neq K_{\overline{\B^5/\Gamma}^{\tor}}^5$.
By (\ref{nonord_tor_K}) and (\ref{nonord_Kirwan_K}), we need to show that
\[(5\Delta)^5\neq (7T)^5.\]
Now, $T_{\ord,i}^5=6$ by Proposition \ref{prop:normal_bundles}.
Hence, we have $T_{\ord}^5=210$ and
    \[T^5=\frac{210}{8!}=\frac{1}{192}.\]

Here, if $(5\Delta)^5$ and $(7T)^5$ are equal, then the denominator of $\Delta^5$ must be divided by $5$ from the above calculation.
On the other hand, \cite[Proposition 6.10]{CMGHL24} implies 
\[\Delta^5\in\frac{1}{e}\Z,\]
where $e$ is the least common multiple of the orders of $S_x$ for any $x\in\Delta$.
However, the quantity $e$ is not divisible by 5 by Proposition \ref{prop:e_not_divisible_by_5}.
This contradicts to the above.
\end{proof}
\begin{rem}
\label{rem:compdelta}
In principle, there is also a way to compute $\Delta^5$ explicitly. This can be approached as in \cite[Proposition 6.1]{CMGH24}. Using the methods of \cite[Lemma 8.2]{CMGHL24} one can exhibit $\Delta$ 
as a finite quotient of a toric variety. After identifying the normal bundle, one can then compute its top intersection number by toric methods. As we do not need the precise number, we do not pursue 
this lengthy computation.
\end{rem}

\subsection{Relation to the minimal model program}
\label{subsection:mmp}
    We gave a proof of Theorems \ref{mainthm:extendability} and \ref{mainthm:not_K_equiv} by a specific computation in our situation.
    In this subsection, we would like to explain a more systematic approach to relate this to the minimal model program; see \cite{KM98, Fu17} for the basic definitions. This also clarifies the relationship between our work and 
    semi-toric compactifications of arithmetic quotients of type I and IV domains, in the sense of Looijenga; see \cite{Lo86, Lo03a, Lo03b} and \cite{AE23, Od22}.
    We explain the strategy below.

    Let us recall the basic definition of the minimal model program before getting to the discussion.
    We use the notion of \textit{canonical model} in the sense of \cite[Definition 3.50]{KM98}, which is referred to as \textit{log canonical model} in \cite[Definition 4.8.1]{Fu17}.
    In addition, in this paper, \textit{minimal model} is defined as not imposing dlt on \cite[Definition 3.50]{KM98}, which is precisely the \textit{(log) minimal model} in \cite[Definition 4.3.1]{Fu17}.
    We shall first apply the general theory of several compactifications to our case.

On the one hand, by the construction of the Baily-Borel compactifications, the automorphic line bundle  $\L=K_{\overline{\B^5/\Gamma}^{\BB}} + \frac{3}{4}\H$ is ample, thus the pair $(\overline{\B^5/\Gamma}^{\BB}, \frac{3}{4}\H)$ is a canonical model and a good minimal model (of itself).
Here, {\em good} means that $K_{\overline{\B^5/\Gamma}^{\BB}} + \frac{3}{4}\H$ is semi-ample (it is in fact ample). 
The map 
\begin{align}
\label{mor:log_crepant_mumford}
    \pi:\Bigl(\overline{\B^5/\Gamma}^{\tor}, \frac{3}{4}\widetilde{\H}+T\Bigr)\to \Bigl(\overline{\B^5/\Gamma}^{\BB}, \frac{3}{4}\H\Bigr)
\end{align}
is log crepant by \cite[Proposition 3.4]{Mu77}; see also \cite[Theorem 5.3]{HKM24}.
This implies that $(\overline{\B^5/\Gamma}^{\tor}, \frac{3}{4}\widetilde{\H}+T)$ is a minimal model (of itself), and more strongly, a good minimal model with quasi-divisorially log terminal singularities, see \cite[Theorem 3.1 (ii)]{Od22}.
Here we note that the fan, being $1$-dimensional, is automatically regular.
On the other hand, let us consider the Kirwan blow-up.
Here, the problem is whether $(\M^{\K}, \frac{3}{4}\widetilde{\D}+\Delta)$ is a minimal model or not.
For this we need to compute the discrepancy $a(\Delta, \M^{\GIT}, \frac{3}{4}\D)\in\Q$ of $\Delta$ with respect to the Kirwan blow-up $f:(\M^{\K},\frac{3}{4}\widetilde{\D}+\Delta)\to(\M^{\GIT},\frac{3}{4}\D)$:
\begin{align}
    \label{eq:f_discrepancy}
    K_{{\M^K}}=f^*\Bigl(K_{\M^{\GIT}}+\frac{3}{4}\D\Bigr)+a\Bigl(\Delta, \M^{\GIT}, \frac{3}{4}\D\Bigr)\Delta-\frac{3}{4}\widetilde{\D}.
\end{align}
\begin{prop}
\label{prop:mmp}
 The discrepancy $a(\Delta, \M^{\GIT}, \frac{3}{4}\D)$ is $\frac{1}{2}$.
\end{prop}
\begin{proof}
To compute the quantity $a(\Delta, \M^{\GIT}, \frac{3}{4}\D)$ one can use \cite[Remark 6.7]{CMGHL24}, which reduces the problem to the calculation of the discrepancy $a'(\Delta, (\P^8)^{\ss}, \frac{3}{4}\D)$ of $\Delta$ with respect to the map  
    \[f':\Bigl(\widetilde{\P^8}^{\ss},\frac{3}{4}\widetilde{\D}+\Delta\Bigr)\to\Bigl((\P^8)^{\ss},\frac{3}{4}\D\Bigr)\]
    (here we use the notation analogous to \cite[Subsection 6.2]{CMGHL24}).
    Note that since the codimension of the strictly semistable loci $(\P^8)^{\ss}\setminus (\P^8)^{\mathrm{s}}$ and $\M^{\GIT}\setminus (\M^{\GIT})^o$ is larger than 2, we can 
    apply the  \cite[Remark 6.7]{CMGHL24} to our setting.
    In fact, combining this with the computation of (\ref{nonord_Kirwan_K}), we have $a'(\Delta, (\P^8)^{\ss}, \frac{3}{4}\D)=a(\Delta, \M^{\GIT}, \frac{3}{4}\D)$.

We claim that $a'(\Delta, (\P^8)^{\ss}, \frac{3}{4}\D)=\frac{1}{2}$. This follows from the computations in the proof of Theorem \ref{thm:nonord_nontransversal}. 
For this purpose, it suffices to consider an affine locus $\mathcal{P}\defeq(S_0\neq 0)$.
We claim that (\ref{eq:Luna_slice_P}) and the subsequent calculations imply 
    \[f'^*(\D) = \widetilde{\D} + 6\Delta.\]
For this, we must take the involution $\SS_2$, which interchanges the two local analytic branches of the discriminant $V=V_1 \cup V_2$, into account.
Locally analytically, we are in the following situation. We have a commutative diagram       
\begin{equation*}
\xymatrix{
\widetilde X\ar[d]_{\sigma_X}\ar[r]^{\widetilde \pi}& \tilde Y \ar[d]^{\sigma_Y}.  \\
X\ar[r]^{\pi}&Y
}
\end{equation*}
The varieties in this diagram are defined as follows. Here $X$ is the $6$-dimensional Luna slice. 
Further, $\sigma_X: \widetilde X \to X$ is the blow-up of $X$ in the origin.  By $W=W_1 \cup W_2,$ we denote the intersection of the discriminant $V$ with the  Luna slice.
The strict transform
of $W$ will be denoted by $\widetilde W$. The horizontal maps in this diagram are the quotient maps given by the $S_2$-action and $\sigma_Y$ is induced from $\sigma_X$. We denote the image of $W$ in
$Y$ by $Z$ and similarly for $\widetilde W$. Finally, we denote the exceptional divisor in $\widetilde X$ by $E_X$ and its image in $\widetilde Y$ by $E_Y$. We claim that the quotient map
$\widetilde \pi$ is  not ramified along $\widetilde W$ and $E_X$. Indeed, the action of  $\SS_2$ interchanges the two branches of $W$ and hence there is no branching along $\widetilde W$.
Moreover, the local Luna slice calculation also shows that  $\SS_2$ acts non-trivially on the projectivized normal space of the origin
and thus also on the exceptional divisor $E_X$.

   Now, let 
    \[ \sigma_X^*(W)=\widetilde W  + b_X E_X \]
  and
    \[ \sigma_{Y}^*(Z)=\widetilde Z  + b_Y E_Y. \]
We claim that $b_X=b_Y$. Indeed, we have
      \begin{equation*}
      \widetilde \pi^*  \sigma_Y^*(Z)= \widetilde \pi^* (\widetilde Z + b_Y E_Y)= \widetilde W + b_Y E_X
      \end{equation*}
 where we used that $\widetilde \pi^*(E_Y)=E_X$, since $\widetilde \pi$ is not ramified along $E_X$.  Comparing this with 
      \begin{equation*}
      \sigma_X^*  \pi^*(Z)=  \sigma_X^* (W)= \widetilde W + b_X E_X
      \end{equation*}
 gives $b_X=b_Y$. 
 We finally recall that the calculations in the proof of Theorem \ref{thm:nonord_nontransversal} imply that the stabilizer group of a generic point of the intersection of the strict transform of the discriminant and the exceptional divisor is of order $2$ and generated by the involution considered above.   
 
 The claim about $a(\Delta, \M^{\GIT}, \frac{3}{4}\D)$ now follows by combining the above calculation with the equality 
    \[K_{\widetilde{\P^8}^{\ss}} = f'^*(K_{(\P^8)^{\ss}}) + 5\Delta\]
    from (\ref{nonord_Kirwan_K}). A straightforward calculation shows that
    \[K_{\widetilde{\P^8}^{\ss}}  = f'^*\Bigl(K_{(\P^8)^{\ss}} + \frac{3}{4}\D\Bigr) + \frac{1}{2}\Delta - \frac{3}{4}\widetilde{\D}\]
    and hence $a(\Delta, \M^{\GIT}, \frac{3}{4}\D)=\frac{1}{2}$.
\end{proof}

    One can extend the notion of $K$-equivalence to pairs.
    Let $(X, \Delta_X)$ and $(Y,\Delta_Y)$ be pairs of    projective normal $\Q$-Gorenstein varieties $X$ and $Y$ and $\Q$-divisors $\Delta_X\in\Pic(X)$ and $\Delta_Y\in\Pic(Y)$ with a birational morphism $g:X \dashrightarrow Y$ and $g_*\Delta_X = \Delta_Y$. 
    We call these pairs \textit{$K$-equivalent as pairs} if 
  there is a common resolution of singularities $Z$ dominating $X$ and $Y$ birationally such that $f_X:Z\to X$ and $f_Y:Z\to Y$ satisfy $f_X^*(K_X+\Delta_X) \sim_{\Q} f_Y^*(K_Y+\Delta_Y)$.
    The above calculation implies the following proposition, which can be seen as a variation of Theorem \ref{mainthm:not_K_equiv}.
    \begin{prop}\label{prop:notsemitoric}
 $(\M^{\K}, \frac{3}{4}\widetilde{\D}+\Delta)$ and  $(\overline{\B^5/\Gamma}^{\tor}, \frac{3}{4}\widetilde{\H}+T)$ are not $K$-equivalent as pairs.
    \end{prop}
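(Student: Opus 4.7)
The plan is to leverage the discrepancy computation $a(\Delta, \M^{\GIT}, \frac{3}{4}\D) = \frac{1}{2}$ established in Proposition \ref{prop:mmp} together with the log crepancy of the toroidal compactification over the Baily-Borel one. First I rewrite the log canonical divisors on both sides in a comparable form. From the discrepancy,
\begin{equation*}
K_{\M^{\K}} + \tfrac{3}{4}\widetilde{\D} + \Delta \;=\; f^{*}\bigl(K_{\M^{\GIT}} + \tfrac{3}{4}\D\bigr) + \tfrac{3}{2}\Delta,
\end{equation*}
while the log crepancy of $\pi$ recalled at the start of Subsection \ref{subsection:mmp} gives
\begin{equation*}
K_{\overline{\B^5/\Gamma}^{\tor}} + \tfrac{3}{4}\widetilde{\H} + T \;=\; \pi^{*}\bigl(K_{\overline{\B^5/\Gamma}^{\BB}} + \tfrac{3}{4}\H\bigr).
\end{equation*}
Under the Deligne-Mostow isomorphism $\phi$ the two right-hand log pairs are identified, so the two right-hand sides become the same class when pulled back to any variety mapping to $\overline{\B^5/\Gamma}^{\BB}$.

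Next I argue by contradiction: suppose the two log pairs are $K$-equivalent as pairs, and pick a projective common resolution with morphisms $h_{1}: Z \to \M^{\K}$ and $h_{2}: Z \to \overline{\B^5/\Gamma}^{\tor}$ resolving $g$. The two birational morphisms $\phi \circ f \circ h_{1}$ and $\pi \circ h_{2}$ from $Z$ to $\overline{\B^5/\Gamma}^{\BB}$ agree on a dense open set (since $\phi \circ f = \pi \circ g$ as rational maps and $h_{2} = g \circ h_{1}$ on the locus where $g$ is defined), hence coincide as morphisms. Pulling back the two displayed identities to $Z$ and invoking the defining relation $h_{1}^{*}(K_{\M^{\K}} + \tfrac{3}{4}\widetilde{\D} + \Delta) \sim_{\Q} h_{2}^{*}(K_{\overline{\B^5/\Gamma}^{\tor}} + \tfrac{3}{4}\widetilde{\H} + T)$ of $K$-equivalence as pairs, the common pullback of $K_{\overline{\B^5/\Gamma}^{\BB}} + \tfrac{3}{4}\H$ cancels and one is left with
\begin{equation*}
\tfrac{3}{2}\,h_{1}^{*}\Delta \;\sim_{\Q}\; 0.
\end{equation*}

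The final step is to observe that this is impossible. The pullback $h_{1}^{*}\Delta$ of the exceptional Kirwan divisor is a nonzero effective $\Q$-Cartier divisor on $Z$ (its support contains the strict transform of $\Delta$). However, any effective $\Q$-Cartier divisor on a projective normal variety which is $\Q$-linearly equivalent to zero must itself be zero, since a suitable integer multiple is the divisor of a global regular, hence constant, function. This contradicts $h_{1}^{*}\Delta \neq 0$ and proves the proposition.

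The main work has already been carried out in the computation $a(\Delta, \M^{\GIT}, \tfrac{3}{4}\D) = \tfrac{1}{2}$ of Proposition \ref{prop:mmp}(1); what remains is essentially a bookkeeping matter. The only mildly subtle point in the argument above is the compatibility $\phi \circ f \circ h_{1} = \pi \circ h_{2}$ upgraded from rational maps to morphisms on the common resolution, which forces the pullbacks of $K_{\overline{\B^5/\Gamma}^{\BB}} + \tfrac{3}{4}\H$ to genuinely cancel on $Z$ rather than merely on an open subset.
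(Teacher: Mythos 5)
Your proof is correct, and it reaches the same contradiction as the paper's proof but by a more elementary route. Both arguments rest on the same two computational inputs: the discrepancy $a(\Delta, \M^{\GIT}, \tfrac{3}{4}\D) = \tfrac{1}{2}$ from Proposition \ref{prop:mmp} and the log crepancy of $\pi$ over the Baily--Borel pair. The paper, however, packages the conclusion through the minimal model program: it invokes the negativity lemma (using nefness of the automorphic bundle $\L$) to show that the difference $E$ of the two pullbacks on the common resolution is effective, and then appeals to the definition of a (log) minimal model to see that $E$ carries $\Delta$ with positive multiplicity. You instead cancel the common pullback of $K_{\overline{\B^5/\Gamma}^{\BB}} + \tfrac{3}{4}\H$ directly (after noting that $\phi\circ f\circ h_1$ and $\pi\circ h_2$ agree as morphisms to the separated target $\overline{\B^5/\Gamma}^{\BB}$), obtain $\tfrac{3}{2}h_1^*\Delta \sim_\Q 0$ explicitly, and finish with the elementary observation that a nonzero effective $\Q$-Cartier divisor on a projective normal variety cannot be $\Q$-linearly equivalent to zero. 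Your version is more self-contained — it needs neither the negativity lemma nor the notion of minimal model — and it makes explicit the final step that the paper leaves implicit, namely that $K$-equivalence forces $E\sim_\Q 0$ while $E$ is effective and nonzero. What the paper's formulation buys is the conceptual link to semi-toric compactifications and the MMP, which is the theme of Subsection \ref{subsection:mmp}; your argument trades that framing for directness.
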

\begin{proof}
We give a proof by contradiction. 
    We first observe that formula (\ref{eq:f_discrepancy}), combined with Proposition \ref{prop:mmp}, shows that
    \begin{equation}
     \label{equ:comparepullbackK1}
    K_{\M^{\K}} + \frac{3}{4}\widetilde{\D} + \Delta = f^*(K_{\M^{\GIT}} + \frac{3}{4}\D) + \frac{3}{2}\Delta. 
   \end{equation}
   Now assume that  $(\M^{\K}, \frac{3}{4}\widetilde{\D}+\Delta)$ and  $(\overline{\B^5/\Gamma}^{\tor}, \frac{3}{4}\widetilde{\H}+T)$ are $K$-equivalent as pairs.
   Then there exists a common resolution $Z$ with birational morphisms $f_1:Z \to \M^{\K}$ and $f_2:Z \to \overline{\B^5/\Gamma}^{\tor}$ such that
   \begin{equation}
     \label{equ:comparepullbackK2}
   f_1^*(K_{\M^{\K}} + \frac{3}{4}\widetilde{\D} + \Delta) = f_2^*(K_{\overline{\B^5/\Gamma}^{\tor}} + \frac{3}{4}\widetilde{\H} + T).
   \end{equation}
   Altogether, we have a commutative diagram 
   \begin{equation*}
\xymatrix{
&Z\ar[ld]_{f_1} \ar[rd]^{f_2}&\\
\M^{\K}\ar[d]^f \ar@{-->}[rr]^g&&\overline{\B^5/\Gamma}^{\tor} \ar[d]^{\pi}\\
\M^{\GIT} \ar[rr]^{\phi}& & \overline{\B^5/\Gamma}^{\BB}.
}
\end{equation*}
Using this, we obtain
   \begin{equation}
   \label{equ:comparepullbackK3}
   f_1^*(f^*(K_{\M^{\GIT}} + \frac{3}{4}\D))=f_2^*(\pi^*((\phi^{-1})^*(K_{\M^{\GIT}}+ \frac{3}{4}\D)))=f_2^*(K_{\overline{\B^5/\Gamma}^{\tor}} + \frac{3}{4}\widetilde{\H} + T).
   \end{equation}
   Combining formulae (\ref{equ:comparepullbackK1}), (\ref{equ:comparepullbackK2}) and  (\ref{equ:comparepullbackK3}) implies that $f_1^*(\Delta)=0$ in $\Pic(Z)\otimes\Q$.
   However, since $f_1$ is proper birational and $\M^{\K}$ is normal,  the Stein factorization implies that$f_{1,*}\OO_Z = \OO_{\M^{\K}}$.
   Now, let $L$ be a line bundle on $\M^{\K}$.
   We assume that $f_1^*L = \OO_Z$.
   Then, by the projection formula, we have $f_{1,*}f_1^*L = L \otimes f_{1,*}\OO_Z$.
   Our assumption and the above observation imply that $L\cong \OO_{\M^{\K}}$, which results in showing that the pullback $f_1^*:\Pic(\M^{\K}) \to \Pic (Z)$ is injective.

\end{proof}
\begin{rem}
  For the ordered case, Gallardo-Kerr-Schaffler \cite[Theorem 2.1]{GKS21} showed that $\M^{\K}_{\ord}\cong\overline{\B^5/\Gamma_{\ord}}^{\tor}$ through the natural morphism lifting the Deligne-Mostow isomorphism.
  Here we remark that our computation determining the discrepancies can be used to give a different proof of this statement. The argument goes as follows.
First, we prove that the pair $(\M_{\ord}^{\K},\frac{1}{2}\widetilde{\D_{\ord}} + \Delta_{\ord})$ 
is a log minimal model of itself with log terminal singularities, as is $(\overline{\B^5/\Gamma_{\ord}}^{\tor},\frac{1}{2}\widetilde{\H_{\ord}} + T_{\ord})$, which can be shown using the same 
arguments as in \cite[Proposition 5.1, Theorem 5.3]{HKM24}.
  The discrepancy of the birational morphism  
    \[\varphi_1:\Bigl(\M_{\ord}^{\K}, \frac{1}{2}\widetilde{\D_{\ord}}+\Delta_{\ord}\Bigr)\to\Bigl(\M^{\GIT}_{\ord}, \frac{1}{2}\D_{\ord}\Bigr)\]
    is $-1$ by (\ref{eq:pull_back}) and (\ref{eq:can_bindle_M^K_{ord}}).
    More precisely, these formulae show that
    \[K_{\M^{\K}_{\ord}}=\varphi_1^*(K_{\M^{\GIT}_{\ord}}+\frac{1}{2}\D_{\ord})-\Delta_{\ord}-\frac{1}{2}\widetilde{\D_{\ord}},\]
    which implies that 
    \[\phi_{\ord}\circ\varphi_1:\left(\M_{\ord}^{\K},\frac{1}{2}\D_{\ord} + \Delta_{\ord}\right) \to \left(\overline{\B^5/\Gamma_{\ord}}^{\BB}, \frac{1}{2}\H_{\ord}\right)\] is log crepant. 
   This is the same situation as the blow-up
\[\pi_{\ord}:\left(\overline{\B^5/\Gamma_{\ord}}^{\tor},\frac{1}{2}\widetilde{\H_{\ord}} + T_{\ord}\right) \to \left(\overline{\B^5/\Gamma_{\ord}}^{\BB}, \frac{1}{2}\H_{\ord}\right);\]
see also (\ref{mor:log_crepant_mumford}).
Hence, a similar proof to \cite[Proposition 5.1 (3)]{HKM24} implies that the pair $(\M_{\ord}^{\K},\frac{1}{2}\D_{\ord} + \Delta_{\ord})$ has log terminal singularities.
To prove that it is a log minimal model of itself, we check that it satisfies \cite[Definition 4.3.1]{Fu17}.
\cite[Definition 4.3.1 (i)-(iii)]{Fu17} is trivially satisfied as $(X,\Delta) = (X', \Delta') = (\M_{\ord}^{\K}, \frac{1}{2}\widetilde{\D_{\ord}}+\Delta_{\ord})$ in the notation there.
Since \[6\L_{\ord} = K_{\overline{\B^5/\Gamma_{\ord}}^{\BB}} + \frac{1}{2}\H_{\ord} \]
is ample and $\phi_{\ord}\circ\varphi_1$ is log crepant, the $\Q$-line bundle
\[K_{\M_{\ord}^{\K}} + \frac{1}{2}\D_{\ord} + \Delta_{\ord}\]
is nef, which shows that the pair $(\M_{\ord}^{\K}, \frac{1}{2}\widetilde{\D_{\ord}}+\Delta_{\ord})$ satisfies \cite[Definition 4.3.1 (iv)]{Fu17}.
\cite[Definition 4.3.1 (v)]{Fu17} follows from the above computation, showing $\phi_{\ord}\circ\varphi_1$ is log crepant; see also \cite[Theorem 5.3]{HKM24}.
Summarizing the above, the pair $(\M_{\ord}^{\K}, \frac{1}{2}\widetilde{\D_{\ord}}+\Delta_{\ord})$ is a log minimal model with log terminal singularities.
   We can now finish the claim concerning the isomorphism $\M^{\K}_{\ord}\cong\overline{\B^5/\Gamma_{\ord}}^{\tor}$.
   According to \cite[Theorem 3.1]{Od22}, it remains to prove that $\D_{\mathrm{ord}}\cup \Delta_{\mathrm{ord}}$ is a normal crossing divisor.
 This follows from a straightforward explicit Luna slice computation; see also Remark \ref{rem:orderediso}. We omit the details.
\end{rem}

Finally, we shall refer to a characterization as a semi-toric compactification.
\begin{prop}
\begin{enumerate}
    \item The pair $(\M^{\K}, \frac{3}{4}\widetilde{\D}+\Delta)$ is not log canonical.
    \item $\M^{\K}$ is not a semi-toric compactification.
    \end{enumerate}
\end{prop}\begin{proof}
    Odaka gave a characterization of semi-toric compactifications in terms of singularities of pairs \cite[Theorem 3.1 (iii)]{Od22}.
    From this we can deduce that (1) implies (2). Hence it suffices to prove the first of the two statements.   
    This can be shown in the same way as \cite[Cororally 5.10]{HKM24}. We omit the details, but a sketch of the proof is as follows.
    Essentially, it follows from the fact that the log canonical centre of the Baily-Borel compactification is the Baily-Borel (unique in this case) cusp \cite[Lemma 2.9 (1)]{MO23}.
    This implies that an exceptional divisor of a resolution of singularities of $\M^{\K}$ is mapped to the unique Baily-Borel cusp $\xi$ in $\overline{\B^5/\Gamma_{\ord}}^{\BB}$.
    Combined with the inequality in \cite[Lemma 2.27]{KM98}, we can deduce that the discrepancy around the exceptional divisor is strictly less than $-1$.
    This concludes the proof.
\end{proof}

\begin{rem}\label{rem:newproof13}
\begin{enumerate}
    \item 
We note that this gives another proof of one direction of Theorem \ref{mainthm:extendability}. Indeed, this can be deduced from \cite[Theorem 7.18]{AE23} or \cite[Subsection 3F]{AEC24}, since semi-toric
compactifications are characterized by the property that they lie between $\overline{\B^5/\Gamma}^{\tor}$ and $\overline{\B^5/\Gamma}^{\BB}$. 
Note that in this case, one does not need to know the 
equality of the second Betti numbers of $\M^{\K}$ and $\overline{\B^5/\Gamma}^{\tor}$ which we used in our proof.
\item The phenomenon that the pair, consisting of the Kirwan blow-up with the exceptional divisor and the discriminant divisor with standard coefficients 
has worse than log canonical singularities, so that, in particular, it is not a semi-toric compactification of ball a quotient, can be observed in other situations as well.
One example is the case of 12 points, which corresponds to the Eisenstein ancestral Deligne-Mostow variety. This was studied in \cite[Corollary 5.10]{HKM24}.
\end{enumerate}
\end{rem}

\section{Cohomology}\label{sec:Bettinumbers}
In this section, we compute the cohomology of the varieties appearing in this paper.

\subsection{The cohomology of $\M_{\ord}^{\K}$,  $\overline{\B^5/\Gamma_{\ord}}^{\BB}$, $\overline{\B^5/\Gamma_{\ord}}^{\tor}$ and $\overline{\B^5/\Gamma}^{\BB}$}
We first collect the results due to Kirwan-Lee-Weintraub \cite{KLW87} and Kirwan \cite{Ki89}
who determined the Betti numbers of $\M_{\ord}^{\K}$ and $\overline{\B^5/\Gamma_{\ord}}^{\BB}$,  
and $\M^{\GIT}\cong \overline{\B^5/\Gamma}^{\BB}$ respectively. We summarize this in

\begin{thm}[{\cite[Table III, Theorem 8.6]{KLW87}}, {\cite[Table, p.40]{Ki89}}]
\label{thm:coh_previous_work}
All the odd degree cohomology of $\M_{\ord}^{\K}$,  $\overline{\B^5/\Gamma_{\ord}}^{\BB}$ and $\overline{\B^5/\Gamma}^{\BB}$ vanishes.
In even degrees, the Betti numbers are as follows:

\begin{align*}
\renewcommand*{\arraystretch}{1.2}
\begin{array}{l|cccccc}
\hskip2cm j&0&2&4&6&8&10\\\hline
\dim H^j(\M_{\ord}^{\K})&1&43&99&99&43&1\\
\dim IH^j(\overline{\B^5/\Gamma_{\ord}}^{\BB})&1&8&29&29&8&1\\
\dim IH^j(\M^{\GIT})&1&1&2&2&1&1\\
\dim IH^j(\overline{\B^5/\Gamma}^{\BB})&1&1&2&2&1&1\\
\end{array}
\end{align*}
\end{thm}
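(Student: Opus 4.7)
My plan is to assemble this theorem from the cited literature: \cite{KLW87, Ki89} supply the GIT-theoretic Betti numbers, while the Deligne-Mostow and Kond\={o} isomorphisms of Section \ref{section:preparation} identify the various GIT quotients with their corresponding ball-quotient Baily-Borel compactifications. The proof thus reduces to extracting each row from those references and confirming that our identifications translate the numbers correctly.

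For $H^*(\M_{\ord}^{\K})$, I would apply Kirwan's algorithm \cite{Ki85, Ki89} for the cohomology of partial desingularisations of GIT quotients to the $\SL_2(\C)$-action on $(\P^1)^8$ with the symmetric linearisation $\OO(1,\dots,1)$. Because the only positive-dimensional stabilisers occur at the 35 polystable orbits of $(4,4)$-type (Lemma \ref{lem:stabilizers}), the partial desingularisation terminates after one blow-up step, and the resulting equivariant Morse-theoretic Poincar\'e polynomial bookkeeping produces $1,43,99,99,43,1$; this is precisely \cite[Table III, Theorem 8.6]{KLW87}.

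For $IH^*(\overline{\B^5/\Gamma_{\ord}}^{\BB}) = IH^*(\M^{\GIT}_{\ord})$, I would invoke the decomposition theorem applied to the resolution $\varphi_1 : \M_{\ord}^{\K} \to \M^{\GIT}_{\ord}$, which splits $H^*(\M_{\ord}^{\K})$ as $IH^*(\M^{\GIT}_{\ord})$ plus explicit contributions coming from the 35 exceptional fibers. Each fiber is, by the Luna slice analysis of Section \ref{section:extendability}, a quotient of $\P^5$ by the stabiliser $R$, hence its cohomology is directly computable; the correction terms are tabulated in \cite{KLW87} and yield the second row. For the unordered case one may either appeal to \cite[Table, p.40]{Ki89} directly, or alternatively observe that since the quotient map $\M^{\GIT}_{\ord} \to \M^{\GIT}$ is finite, $IH^*(\M^{\GIT}) = IH^*(\M^{\GIT}_{\ord})^{\SS_8}$; both approaches produce $1,1,2,2,1,1$, and the Deligne-Mostow isomorphism $\phi$ then transports this to $IH^*(\overline{\B^5/\Gamma}^{\BB})$.

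The vanishing of odd cohomology in all four rows is structural: the Morse stratification of $(\P^1)^8$ induced by the norm-squared of the moment map has only even-real-dimensional strata, and this evenness survives Kirwan's partial desingularisation and the decomposition-theorem bookkeeping. The main difficulty, such as it is, lies not in any cohomological computation but in ensuring that the bookkeeping conventions of \cite{KLW87} and \cite{Ki89} agree with ours and that the Kirwan-stratum data they employ corresponds to the polystable locus described in Section \ref{section:preparation}; once this is checked, the table follows by direct transcription.
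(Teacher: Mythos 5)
Your proposal is essentially the same as the paper's: Theorem \ref{thm:coh_previous_work} is a citation theorem whose ``proof'' is the references in its header, and your plan of extracting the four rows from \cite{KLW87} (for $\M_{\ord}^{\K}$ and $\M^{\GIT}_{\ord}$), from \cite{Ki89} (for $\M^{\GIT}$), and transporting via the Deligne--Mostow and Kond\={o} isomorphisms is exactly what the paper does.

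One small imprecision worth flagging: you cite Lemma \ref{lem:stabilizers} and the group $R \cong \C^{\times}\rtimes\SS_2$ to describe the stabiliser of a polystable orbit in the \emph{ordered} problem. That lemma concerns the \emph{unordered} point $c_{4,4}$; for an ordered $(4,4)$-configuration the antidiagonal matrices in $R$ swap the two groups of four coincident points and so do \emph{not} stabilise the ordered tuple, leaving only $R^{\circ}\cong\C^{\times}$ as the connected stabiliser (with a possible finite piece that is trivial in $\PGL_2$). Consequently the exceptional fibre of $\varphi_1$ over an ordered polystable orbit is $\P^5/\!\!/\C^{\times}$, not a quotient by $R$. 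This does not affect your overall strategy --- the correction-term bookkeeping in \cite{KLW87} already treats the ordered case --- but the internal cross-reference is to the wrong stabiliser group.

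Everything else (the decomposition theorem for $\varphi_1$ to recover $IH^*(\M^{\GIT}_{\ord})$, the identity $IH^*(\M^{\GIT}) = IH^*(\M^{\GIT}_{\ord})^{\SS_8}$ for the finite quotient, and the structural reason for odd vanishing coming from even-dimensional Morse strata) is correct and standard, and matches the references in spirit.
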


By an application of an easy version of the decomposition theorem, we can also compute the cohomology of $\overline{\B^5/\Gamma_{\ord}}^{\tor}$ (without using that this space is isomorphic 
to $\M_{\ord}^{\K}$). 

\begin{thm}
\label{thm:coh_ordered_tor}
All the odd degree cohomology of $\overline{\B^5/\Gamma_{\ord}}^{\tor}$ vanishes.
In even degrees, the Betti numbers are as follows:

\begin{align*}
\renewcommand*{\arraystretch}{1.2}
\begin{array}{l|cccccc}
\hskip2cm j&0&2&4&6&8&10\\\hline
\dim H^j(\overline{\B^5/\Gamma_{\ord}}^{\tor})&1&43&99&99&43&1\\
\end{array}
\end{align*}
\end{thm}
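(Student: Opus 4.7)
My plan is to compute $H^\bullet(\overline{\B^5/\Gamma_{\ord}}^{\tor})$ by applying the decomposition theorem to the toroidal resolution
$\pi_{\ord}:\overline{\B^5/\Gamma_{\ord}}^{\tor}\to\overline{\B^5/\Gamma_{\ord}}^{\BB}$, using as input the intersection Betti numbers of the Baily--Borel compactification already recorded in Theorem \ref{thm:coh_previous_work}. The essential geometric input is that the $35$ components $T_{\ord,i}\cong\P^2\times\P^2$ of the toroidal boundary are pairwise disjoint: each is the whole $\pi_{\ord}$-fiber over a distinct cusp $c_i$, so two components lying over different cusps cannot meet. Thus $\pi_{\ord}$ is an isomorphism over $\B^5/\Gamma_{\ord}$ and contracts $35$ disjoint copies of $\P^2\times\P^2$ to the $35$ cusps.

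Since $\overline{\B^5/\Gamma_{\ord}}^{\tor}$ is smooth, the Beilinson--Bernstein--Deligne decomposition theorem gives a splitting
\[
R\pi_{\ord*}\Q \;\cong\; IC\bigl(\overline{\B^5/\Gamma_{\ord}}^{\BB}\bigr)[-5] \;\oplus\; \bigoplus_{i=1}^{35}\mathcal{S}_i
\]
with each $\mathcal{S}_i$ a skyscraper complex at the cusp $c_i$, all $35$ being isomorphic by the $\SS_8$-action on the cusps. Taking hypercohomology produces
\[
\dim H^k\bigl(\overline{\B^5/\Gamma_{\ord}}^{\tor}\bigr) \;=\; \dim IH^k\bigl(\overline{\B^5/\Gamma_{\ord}}^{\BB}\bigr) \;+\; 35\cdot \dim\mathcal{H}^k(\mathcal{S}_1),
\]
so the theorem reduces to computing the local stalk cohomology of a single $\mathcal{S}_i$. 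For this I use Proposition \ref{prop:normal_bundles}: the normal bundle of $T_{\ord,1}$ is $\OO(-1,-1)$, so the cusp link $L_1$ is the oriented $S^1$-bundle over $\P^2\times\P^2$ with Euler class $h_1+h_2$. A Gysin-sequence computation with the action of $h_1+h_2$ on $H^*(\P^2\times\P^2)=\Q[h_1,h_2]/(h_1^3,h_2^3)$ gives $\dim H^*(L_1)=(1,0,1,0,1,1,0,1,0,1)$ in degrees $0,\ldots,9$. The standard identification of the local $IC$-stalk at an isolated $n$-dimensional cusp singularity as $\mathcal{H}^{k-n}(IC)_{c_1}=H^k(\mathrm{link})$ for $k<n$ and zero for $k\geq n$ (here $n=5$) then forces
\[
\dim\mathcal{H}^k(\mathcal{S}_1) \;=\; \dim H^k(\P^2\times\P^2) \;-\; [k<5]\cdot \dim H^k(L_1),
\]
which evaluates to $(0,0,1,0,2,0,2,0,1,0,0)$. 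Adding $35$ times this to $(1,0,8,0,29,0,29,0,8,0,1)$ produces exactly $(1,0,43,0,99,0,99,0,43,0,1)$; odd-degree vanishing is automatic since both summands live in even degrees only.

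The delicate step is the identification of the local $IC$-stalk with the sub-middle-degree cohomology of the link. If I prefer to sidestep the intersection-cohomology machinery, the equivalent concrete route is to compare, via Mayer--Vietoris, the covers $\{U,N_1\}$ of $\overline{\B^5/\Gamma_{\ord}}^{\tor}$ and $\{U,\{c_1\}\}$ of $\overline{\B^5/\Gamma_{\ord}}^{\BB}$, where $U=\B^5/\Gamma_{\ord}$ is the common open stratum, $N_1$ is a tubular neighbourhood of $T_{\ord,1}$ retracting onto it, and $\partial N_1=L_1$. Combined with the Gysin computation and the fact that the singular cohomology of the Baily--Borel agrees with its intersection cohomology away from the middle degree (which follows from the isolated-singularity structure at the cusps), this produces the same Betti numbers without any reference to $IC$-stalks.
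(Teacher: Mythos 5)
Your proof is correct and takes essentially the same route as the paper: both apply the decomposition theorem to the toroidal contraction $\pi_{\ord}$, input the intersection Betti numbers of $\overline{\B^5/\Gamma_{\ord}}^{\BB}$ from Theorem \ref{thm:coh_previous_work}, and add the skyscraper contribution from the $35$ disjoint copies of $\P^2\times\P^2$. The paper simply delegates the skyscraper computation to \cite[Lemma 9.1]{GH17} (which, thanks to the hard-Lefschetz symmetry of the decomposition, needs only the Betti numbers $(1,2,3,2,1)$ of $\P^2\times\P^2$), whereas you unpack it explicitly via the normal-bundle, the Gysin sequence for the link, and the $IC$-stalk formula — a valid and more self-contained version of the same argument.
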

\begin{proof}
We use the form of the decomposition theorem as given in \cite[Lemma 9.1]{GH17}. Here we have $35$ cusps and the toroidal boundary at each cusp is isomorphic to $\P^2 \times \P^2$. The even
Betti numbers of this space are given by $(1,2,3,2,1)$ and the result then follows from the Betti numbers of $\overline{\B^5/\Gamma_{\ord}}^{\BB}$ together with the fact that there are 35 cusps.   
\end{proof}

\subsection{The cohomology of $\M^{\K}$}
Now, we compute the cohomology of $\M^{\K}$.
This will be done using the Kirwan method \cite{Ki84, Ki85, Ki89}, studying the cohomology of the Kirwan blow-ups.
We mainly follow \cite[Chapter 3, 4]{CMGHL23}, in particular, the case of cubic threefolds with precisely $2A_5$-singularities.
Let us consider $X=\P^8$, acted on by $G=\SL_2(\C)$ with the usual linearization and let $Z_R^{\ss}$ be the fixed locus of the action of $R$ on $X^{\ss}$, which is the semi-stable locus.
Here we recall that $R$ is the stabilizer of the strictly semi-stable point $c_{4,4}$ as introduced in Lemma \ref{lem:stabilizers}.
We denote by $\widetilde{X}^{\ss}\defeq Bl_{G\cdot Z_R^{\ss}}(X)$ the blow-up whose centre
is the unique polystable orbit $G\cdot Z_R^{\ss}$. 
From \cite[Section 3 Eq. 3.2]{Ki89} or \cite[Subsection 4.12, (4.22)]{CMGHL23}, 
the Poincar\'e series of $\widetilde{X}^{\ss}$ is given by 
\[P_t^G(\widetilde{X}^{\ss})=P_t^G(X^{\ss})+A_R(t),\]
where $A_R(t)$ is a correction term consisting  of a ``main term" and an  ``extra term" with respect to the unique stabilizer $R$; see \cite[Section 4.1.2]{CMGHL23} for precise definitions.

This method reduces the computation of $H^k(\M^{\K})$ to the estimation of  
\begin{enumerate}
    \item the semi-stable locus (Subsection \ref{subsection:semi-stable}) ,
    \item the main correction term (Subsection \ref{subsection:main_correction}) and  
    \item the extra correction term (Subsection \ref{subsection:extra_correction}).
\end{enumerate}

\subsubsection{$\mathrm{\mathbf{Equivariant \ cohomology\ of\ the\ semi\mathchar`-stable\ locus}}$}
\label{subsection:semi-stable}

Here we proceed according to \cite[Chapter 3]{CMGHL23}.
We can compute the cohomology of the semi-stable locus by using the stratification introduced by Kirwan.
We omit details, but will still need to introduce some notation in order to describe the outline. 
Let $\{S_{\beta}\}_{\beta\in\mathcal{B}}$ be the stratification defined in \cite[Theorem 4.16]{Ki84} and $d(\beta)$ be the codimension of $S_{\beta}$ in $X^{\ss}$.
Here, the index set $\mathcal{B}$ consists of the point which is closest to the origin of the convex hull spanned by some weights in the closure of a positive Weyl chamber in the Lie algebra of a maximal torus in $\SO(2)$; see \cite[Chapter 3]{CMGHL23} or \cite[Definition 3.13]{Ki84} for details.

\begin{prop}
\label{prop:semi-stable_locus}
\[P_t^G(X^{\ss})\equiv 1+t^2+2t^4 \bmod t^6.\]
\end{prop}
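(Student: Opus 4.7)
The plan is to apply Kirwan's equivariantly perfect Morse formula for the stratification $\{S_\beta\}_{\beta\in\mathcal{B}}$ of $X=\P^8$,
\[
P_t^G(X)=P_t^G(X^{\ss})+\sum_{0\neq\beta\in\mathcal{B}}t^{2d(\beta)}P_t^G(S_\beta),
\]
and exploit the observation that only strata with $d(\beta)\leq 2$ can contribute to the congruence class modulo $t^6$, since $P_t^G(S_\beta)$ has constant term equal to $1$ (so $t^{2d(\beta)}P_t^G(S_\beta)$ is a formal power series starting in degree $2d(\beta)$).

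The first step is to compute the equivariant Poincaré series of the ambient space. Since $X=\P H^0(\P^1,\OO(8))$ is the projectivisation of an $\SL_2(\C)$-representation, the Borel construction $X\times_G EG$ is a $\P^8$-bundle over $BG$, and Leray-Hirsch together with $P_t(B\SL_2(\C))=1/(1-t^4)$ gives
\[
P_t^G(\P^8)=\frac{1-t^{18}}{(1-t^2)(1-t^4)}\equiv 1+t^2+2t^4\pmod{t^6}.
\]

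The second step is to enumerate the unstable strata and estimate their codimensions. For binary octics the unstable locus is the classical locus of forms admitting a root of multiplicity $r>4$, so $\mathcal{B}\setminus\{0\}$ is indexed by $r\in\{5,6,7,8\}$, with $\beta_r$ corresponding to the stratum of forms whose maximal root multiplicity is exactly $r$. Fixing such an $r$-fold root at a point $p\in\P^1$ cuts out a linear subspace $\P^{8-r}\subset\P^8$; since $2r>8$ there is at most one such root, so allowing $p$ to vary over $\P^1$ realises $S_{\beta_r}$ as (an open subset of) a $\P^{8-r}$-bundle over $\P^1$. Hence $\dim S_{\beta_r}=9-r$ and $d(\beta_r)=r-1\geq 4$, so every nonzero stratum contributes a multiple of $t^{2d(\beta_r)}\geq t^8$, which is zero modulo $t^6$.

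Combining these two steps yields $P_t^G(X^{\ss})\equiv P_t^G(\P^8)\equiv 1+t^2+2t^4\pmod{t^6}$, as claimed. There is no substantial obstacle to this argument; the only small point requiring care is the correct identification of the Kirwan stratification of $\P^8$ under $\SL_2(\C)$, which is classical and can be extracted directly from Kirwan's treatment of binary forms in \cite{Ki84, Ki85} together with the presentation in \cite[Chapter 3]{CMGHL19}.
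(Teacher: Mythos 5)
Your argument is correct and follows the paper's overall strategy — Kirwan's equivariantly perfect stratification reduces the claim modulo $t^6$ to the value of $P_t^G(\P^8)$ together with a lower bound $2d(\beta)\geq 6$ on the codimensions of the nonzero unstable strata — but you obtain the codimension bound by a somewhat different, and in fact sharper, route. The paper applies the general estimate $d(\beta)\geq 7-r(\beta)$ coming from \cite[Proposition 3.5]{CMGHL19}, where $r(\beta)$ counts the weights $\alpha$ with $\beta\cdot\alpha\geq ||\beta||^2$; computing $r(\beta)=5-a$ for $\beta=(a,-a)\in\mathcal{B}=\{(1,-1),\ldots,(4,-4)\}$ then yields $d(\beta)\geq 2+a\geq 3$. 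You instead identify the unstable strata concretely as the loci of binary octics whose maximal root multiplicity equals $r$, for $r=5,\ldots,8$, realise each as a $\P^{8-r}$-bundle (up to taking an open subset) over $\P^1$, and obtain the exact value $d(\beta_r)=r-1\geq 4$. Both bounds are adequate since all that matters is $d(\beta)\geq 3$. Your version replaces the pre-packaged abstract estimate by the classical geometric description of the unstable locus of binary forms, which is more transparent and gives exact codimensions rather than a lower bound, but for the statement in question it amounts to a mild variation rather than a genuinely different method.
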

\begin{proof}
We shall prove $2d(\beta)\geq 6$ for any $0\neq\beta\in\mathcal{B}$.
This implies
\begin{align*}
    P_t^G(X^{\ss})&\equiv P_t(X)P_t(B\SL_2(\C))\bmod t^6 \\
        &\equiv (1-t^2)^{-1}(1-t^4)^{-1}\bmod t^6\\
    &\equiv 1+t^2+2t^4\bmod t^6.
\end{align*}
Here we denote by $BG$ the classifying space for any topological space $G$; see \cite[Appendix A]{CMGHL23}.
In the same way, as in the proof of \cite[Proposition 3.5]{CMGHL23} we obtain 
\[d(\beta)\geq7-r(\beta),\]
where $r(\beta)$ is the number of weights $\a$ satisfying $\beta\cdot\a\geq ||\beta||^2$.
Now, we have
\[\mathcal{B}=\{(1,-1), (2,-2), (3,-3), (4,-4)\}.\]
For each $(a,-a)\in\mathcal{B}$, it easily follows 
\[r(\beta)=5-a,\]
and this implies $d(\beta)\geq 3$.

\end{proof}

\subsubsection{$\mathrm{\mathbf{The\ main\ correction\ term}}$}
\label{subsection:main_correction}

The following is based on \cite[Chapter 4]{CMGHL23}.

\begin{prop}
\label{prop:main_correction}
The main correction term in $A_R(t)$ is given by 

 \[ (1-t^4)^{-1}(t^2+t^4)\equiv t^2+t^4 \bmod t^6.  \]

\end{prop}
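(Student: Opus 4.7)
The plan is to apply the general formula for the main correction term in Kirwan's method \cite{Ki89}, following the framework of \cite[Section 4.1.2]{CMGHL19}. The main correction term $A_R^{\text{main}}(t)$ reflects the change in $G$-equivariant Poincar\'e series when the polystable orbit $G \cdot c_{4,4}$, with stabilizer $R$, is replaced in the Kirwan blow-up by the $R^{\circ}$-semi-stable part of the exceptional divisor, which, via the Luna slice theorem and Lemma \ref{lem:Luna_slice}, is identified with $G \times_R \P(N)^{R^{\circ}\text{-ss}}$.

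First I would compute $P_t(BR)$. By Lemma \ref{lem:stabilizers}, $R \cong \C^{\times} \rtimes \SS_2$ with the nontrivial element of $\SS_2$ conjugating $R^{\circ} = \C^{\times}$ by inversion. Therefore $\SS_2$ acts on $H^*(B\C^{\times}) = \Q[x]$ (with $\deg x = 2$) by $x \mapsto -x$, yielding $P_t(BR) = 1/(1 - t^4)$, which accounts for the denominator in the target formula.

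Next I would compute $P_t^R(\P(N)^{R^{\circ}\text{-ss}})$ via the equivariantly perfect Morse-theoretic stratification \cite{Ki84} of $\P(N) \cong \P^5$ into the $R^{\circ}$-semi-stable locus and the two closed unstable strata $Y^{\pm} \cong \P^2$ of complex codimension $3$ from Lemma \ref{lem:unstable_locus}. The crucial point is that the nontrivial element of $\SS_2$ exchanges $Y^+$ and $Y^-$ while simultaneously negating $x$, and both contributions must be tracked on the $R^{\circ}$-equivariant cohomology of each stratum. Using the equivariant formality of linear $\C^{\times}$-actions on projective spaces to compute $P_t^{R^{\circ}}(\P^5)$ and $P_t^{R^{\circ}}(Y^{\pm})$, then taking $\SS_2$-invariants of the resulting stratification identity, one extracts $P_t^R(\P(N)^{R^{\circ}\text{-ss}})$. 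Subtracting $P_t(BR)$ as prescribed in \cite[Section 4.1.2]{CMGHL19} delivers the asserted formula $A_R^{\text{main}}(t) = (1-t^4)^{-1}(t^2 + t^4)$.

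The main obstacle is the careful bookkeeping of the $\SS_2$-action on the equivariantly perfect stratification: one must simultaneously incorporate the geometric swap of the two unstable strata $Y^{\pm}$ and the algebraic sign action on the generator of $H^*(B\C^{\times})$, and properly combine these with the codimension shift $t^6$ coming from the Thom class of each unstable stratum. This calculation is essentially parallel to the analogous stratification computation carried out in \cite[Section 4]{CMGHL19} for the cubic threefold case.
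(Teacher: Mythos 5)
Your overall strategy is a genuine departure from the paper's proof, and it contains a conceptual slip worth flagging. The paper's proof is short: it computes $P_t^N(Z_R^{ss})=(1-t^4)^{-1}$ (since $Z_R^{ss}=\{c_{4,4}\}$ is a single point and $H^\bullet_N(Z_R^{ss})=\bigl(H^\bullet(BR)\bigr)^{\Z/2\Z}=\Q[c^4]$) and then cites the closed-form expression \cite[(4.24)]{CMGHL19} for the main term, which is simply $P_t^N(Z_R^{ss})$ times a universal degree factor determined by the codimension $c=6$ of the Luna slice. You correctly reproduce the first half of this (your $P_t(BR)=(1-t^4)^{-1}$ is the paper's $P_t^N(Z_R^{ss})$, using exactly the sign action of $\SS_2$ on $H^*(B\C^\times)$), but your recipe for the second half is not the definition of the \emph{main} correction term.

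Concretely, the quantity $P_t^R\bigl(\P(N)^{R^{\circ}\text{-ss}}\bigr)-P_t(BR)$ that you propose to compute is (essentially) the full correction $A_R(t)$, not $A_R^{\mathrm{main}}(t)$. In the Kirwan/CMGHL framework, the main term is computed from the full exceptional $\P^{c-1}$-bundle over $Z_R^{ss}$ (before restricting to the $R^\circ$-semi-stable locus), while the subtraction of the $R^\circ$-unstable strata of $\P(N)$ is exactly what the \emph{extra} correction term \cite[(4.25)]{CMGHL19} accounts for; this is handled separately in Proposition \ref{prop:extra_correction}, where it is shown to vanish modulo $t^6$. Because that vanishing holds, your computation would indeed produce the correct answer $t^2+t^4 \bmod t^6$, so the numerics are saved; but if one then also adds in Proposition \ref{prop:extra_correction}, the unstable-strata subtraction has been counted twice. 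Moreover, carrying out the Morse-theoretic stratification of $\P^5$ by the $\C^\times$-weights $\{\pm 4,\pm 6,\pm 8\}$ and taking $\SS_2$-invariants, as you propose, is substantially more work than the paper's one-line application of (4.24); it amounts to re-deriving that formula from scratch, and the details (which strata, which codimensions, how the swap acts on each) are left unverified in your sketch. To align with the paper, you should compute $P_t^N(Z_R^{ss})$ exactly as you did, and then multiply by the fixed degree factor from (4.24), leaving the semi-stability truncation entirely to the extra term.
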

\begin{proof}
In the same way as in \cite[Proposition B.1 (4)]{CMGHL23}, the normalizer of $R$ is computed to be
\[N\defeq N(R)\cong \mathbb{T}\rtimes \Z/2\Z.\]
Hence, it follows that
\begin{align*}
    H_N^{\bullet}(Z_R^{\ss})&=(H_{\mathbb{T}}^{\bullet}(Z_R^{\ss}))^{\Z/2\Z}\\
    &=(H^{\bullet}(BR)\otimes H^{\bullet}_{\mathbb{T}/R}(Z_R^{\ss}))^{\Z/2\Z}\\
    &=(H^{\bullet}(BR)\otimes H^{\bullet}(*))^{\Z/2\Z}\\
        &=\Q[c^4]
\end{align*}
where $*$ denotes a set of 1 point and the degree of $c$ is 1.
The last equation follows from the discussion in the proof of \cite[Proposition 4.4]{CMGHL23}.
Hence, 
\[P_t^N(Z_R^{\ss})=(1-t^4)^{-1}.\]
Combining this with \cite[(4.24)]{CMGHL23} completes the proof.
\end{proof}

\subsubsection{$\mathrm{\mathbf{The\ extra\ correction\ term}}$}
\label{subsection:extra_correction}

Let $\mathcal{N}$ be the normal bundle to the orbit $G\cdot Z_R^{\ss}$.
Then, for a generic point $x\in Z_R^{\ss}$, we have a representation $\rho$ of $R$ on $\mathcal{N}_x$.
Let $\mathcal{B}(\rho)$ be the set consisting of the closest point to 0 of the convex hull of a nonempty set of weights of the representation $\rho$.
For $\beta'\in\mathcal{B}(\rho)$, let $n(\beta')$ be the number of weights less than $\beta'$.

\begin{prop}
\label{prop:extra_correction}
The extra correction term vanishes modulo $t^6$, i.e.,  does not contribute to $A_R(t)$.
\end{prop}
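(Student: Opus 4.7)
The plan is to compute the weights of $\rho$ on $\mathcal{N}_x$ explicitly using the Luna slice description, enumerate the set $\mathcal{B}(\rho)$, and then apply the standard Kirwan codimension bound to conclude that each summand in the extra correction is $O(t^6)$.

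By Lemma \ref{lem:Luna_slice} together with the action formula (\ref{eq:action_diag}), at a generic point $x\in Z_R^{\ss}$ the normal space $\mathcal{N}_x$ to the orbit $G\cdot x$ is identified with the Luna slice $\C^6$, and the coordinates $\alpha_0,\alpha_1,\beta_0,\beta_1,\gamma_0,\gamma_1$ are weight vectors for $R^{\circ}\cong \C^\times$ of weights $8,-8,6,-6,4,-4$ respectively. The residual action of $R/R^{\circ}\cong\SS_2$ merely interchanges opposite weights, and since the extra correction is taken on $N(R)$-invariants (as in Proposition \ref{prop:main_correction}), this symmetry does not alter any of the degree estimates below.

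Since $R^{\circ}$ is one-dimensional, for any set $W$ of integer weights the closest point to $0$ of $\mathrm{conv}(W)$ is either $0$ (when $W$ contains weights of both signs) or the element of smallest absolute value in $W$ (when $W$ is one-signed). Working in the positive Weyl chamber, this gives $\mathcal{B}(\rho)\cap \R_{>0}=\{4,6,8\}$. For each such $\beta'$, the corresponding Kirwan stratum of the unstable locus of $\mathcal{N}_x$ is an open subset of the sum of weight spaces $V_\alpha$ with $\alpha\cdot\beta'\geq \|\beta'\|^2$, so its complex codimension in $\mathcal{N}_x$ is exactly the integer
\[
n(\beta')=\#\{\alpha\in\{\pm 4,\pm 6,\pm 8\}:\alpha<\beta'\}.
\]
A direct count yields $n(4)=3$, $n(6)=4$, and $n(8)=5$.

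Following the structure of the extra contribution to $A_R(t)$ as in the derivation of \cite[(4.24)]{CMGHL19}, the summand indexed by $\beta'\in \mathcal{B}(\rho)\setminus\{0\}$ has the shape $t^{2n(\beta')}\cdot Q_{\beta'}(t)$, where $Q_{\beta'}(t)$ is a power series with nonnegative integer coefficients (combining a factor of $P_t(BR)=(1-t^4)^{-1}$ with the Poincar\'e series of the relevant fixed locus). Since $2n(\beta')\geq 6$ for every nonzero $\beta'\in\mathcal{B}(\rho)$, each such summand is $O(t^6)$, and so the total extra correction vanishes modulo $t^6$, as claimed. The main point to verify carefully is that the weights of $\rho$ on $\mathcal{N}_x$ really coincide with those read off from the Luna slice (which follows because the Luna slice is transverse to the orbit at $c_{4,4}$); once this identification is in hand, the vanishing is a one-line numerical check.
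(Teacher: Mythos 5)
Your proof is correct and follows essentially the same route as the paper: identify the weights of $R^{\circ}$ on the Luna slice as $\pm4,\pm6,\pm8$ via (\ref{eq:action_diag}), enumerate $\mathcal{B}(\rho)$, compute $n(\beta')$ for $|\beta'|\in\{4,6,8\}$ to be $3,4,5$, and conclude that since $n(\beta')\geq 3$ each summand of the extra correction is $O(t^6)$. The only cosmetic differences are that you tabulate the $n(\beta')$ values individually rather than via the closed form $n(|\beta'|)=1+|\beta'|/2$, and you add a brief remark about $\SS_2$-invariance that the paper leaves implicit.
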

\begin{proof}
In our case we have $Z_R^{\ss}=\{c_{4,4}\}$.
Thus, to describe $\mathcal{N}_x$, we have to compute  \[\Bigl(T_{c_{4,4}}(\SL_2(\C)\cdot\{c_{4,4}\})\Bigr)^{\perp}.\]
This was calculated in Lemma \ref{lem:Luna_slice}.
Moreover, $\diag(\lambda, \lambda^{-1})$ acts on $T_{c_{4,4}}\C^9\cong\C^9$ by the weights 
\[0, \pm 2, \pm 4, \pm 6, \pm 8.\]
It follows that $T_{c_{4,4}}(\SL_2(\C)\cdot\{c_{4,4}\})$ is generated by the weights $\{0, \pm 2\}$, and hence we obtain
\[\mathcal{B}(\rho)=\{\pm 4, \pm 6, \pm 8\}.\]
This shows that 
\begin{align*}
    d(|\beta'|)&=n(|\beta'|)\\
    &=1+\frac{|\beta'|}{2}\\
    &\geq 3
\end{align*}
for $\beta'\in\mathcal{B}(\rho)$.
This in turn implies that 
\[\mathrm{``extra\ correction\ term"} \equiv 0 \bmod t^6\]
by \cite[(4.25)]{CMGHL23}.
\end{proof}

\subsubsection{$\mathrm{\mathbf{Computation\ of\ the\ cohomology\ of\  \M^{\K}}}$}
From Propositions \ref{prop:semi-stable_locus}, \ref{prop:main_correction} and \ref{prop:extra_correction}, it follows that
\begin{align*}
    P_t(\M^{\K})&= P_t^G(\widetilde{X}^{\ss})\\
    &\equiv (1+t^2+ 2t^4)+(t^2+t^4) \bmod t^6\\
    &\equiv 1+2t^2+3t^4 \bmod t^6.
\end{align*}
Therefore, we obtain the following:

\begin{thm}
\label{thm:coh_M^K}
All the odd degree cohomology of $\M^{\K}$ vanishes.
In even degrees, its Betti numbers are given as follows:
\begin{align*}
\renewcommand*{\arraystretch}{1.2}
\begin{array}{l|cccccc}
\hskip2cm j&0&2&4&6&8&10\\\hline
\dim H^j(\M^{\K})&1&2&3&3&2&1\\
\end{array}
\end{align*}
\end{thm}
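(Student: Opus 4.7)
The strategy is to assemble Propositions \ref{prop:semi-stable_locus}, \ref{prop:main_correction} and \ref{prop:extra_correction} via the basic Kirwan formula
\[
P_t(\M^{\K}) = P_t^G(\widetilde{X}^{\ss}) = P_t^G(X^{\ss}) + A_R(t),
\]
where $A_R(t)$ decomposes as the sum of the main correction term and the extra correction term attached to the unique non-trivial stabilizer $R$ (in our case the stabilizer of $c_{4,4}$, computed in Lemma \ref{lem:stabilizers}). The first step is to recall that, because the only polystable but non-stable orbit is $G\cdot\{c_{4,4}\}$, the Kirwan desingularisation is obtained in a single blow-up step, so the above formula directly yields the rational equivariant cohomology of $\M^{\K}$; moreover $\M^{\K}$ has only finite quotient singularities and so its rational singular cohomology coincides with its equivariant cohomology.

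Substituting the three propositions yields, modulo $t^6$,
\[
P_t(\M^{\K}) \equiv (1+t^2+2t^4) + (t^2+t^4) + 0 \equiv 1 + 2t^2 + 3t^4 \pmod{t^6}.
\]
This pins down the Betti numbers in degrees $0,2,4$. To obtain the remaining degrees I would invoke Poincar\'e duality, which is available because $\M^{\K}$ is a projective variety with finite quotient singularities of complex dimension $5$: thus $\dim H^{10-j}(\M^{\K}) = \dim H^{j}(\M^{\K})$ for all $j$, which fills in degrees $6,8,10$. The vanishing of the odd Betti numbers follows from the fact that each ingredient in the Kirwan formula has vanishing odd part: $X = \P^8$ is a projective space, the classifying space $BG$ has only even cohomology, the strata $S_\beta$ in the Kirwan stratification of $X^{\ss}$ contribute only even-degree classes, and the correction term $A_R(t)$ was computed from equivariant cohomology of a point with respect to $N(R)$ (and a representation-theoretic count of weights), which again produces only even degrees.

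I do not expect any genuine obstacle here, because Section \ref{sec:Bettinumbers} has already carried out all the delicate work: the identification of the stabilizer and its normalizer (Lemma \ref{lem:stabilizers}), the explicit Luna slice (Lemma \ref{lem:Luna_slice}) together with the weight decomposition used to identify $\mathcal{B}(\rho)$, and the estimates on $d(\beta)$ and $n(\beta')$ that ensure all undesired contributions lie in degrees $\geq 6$. The one point that requires a little care is checking that the truncation \emph{mod} $t^6$ combined with Poincar\'e duality is sufficient to determine the full Poincar\'e polynomial, rather than only the bottom half; this works precisely because $\dim_{\C} \M^{\K}=5$, so the bottom half (degrees $0,2,4$) together with the middle degree determine everything, and the middle degree $\dim H^6(\M^{\K})$ is recovered from $\dim H^4(\M^{\K})$ by duality. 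Putting these assemblies together gives the Betti numbers displayed in the table.
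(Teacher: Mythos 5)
Your proposal is correct and follows essentially the same route as the paper: it assembles Propositions \ref{prop:semi-stable_locus}, \ref{prop:main_correction} and \ref{prop:extra_correction} via the Kirwan formula $P_t(\M^{\K})=P_t^G(X^{\ss})+A_R(t)$ and truncates modulo $t^6$. Your explicit appeal to Poincar\'e duality on the $\Q$-factorial projective fivefold $\M^{\K}$, and your justification of the vanishing of odd cohomology from the evenness of each ingredient, merely make explicit what the paper leaves implicit, and they are both correct; incidentally, your intermediate sum $(1+t^2+2t^4)+(t^2+t^4)$ matches Proposition \ref{prop:semi-stable_locus} as stated, whereas the paper's displayed intermediate line $(1+t^2+t^4)+(t^2+t^4)$ contains a harmless typo that does not affect the final answer.
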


\subsection{The cohomology of $\overline{\B^5/\Gamma}^{\tor}$}
Now, we compute the cohomology of the toroidal compactification of the 5-dimensional ball quotient.
Our main tool is the decomposition theorem in the easy form stated in theorem \cite[Lemma 9.1]{GH17}, see also \cite[chapter 6]{CMGHL23}.
This allows us to combine the cohomology of $\overline{\B^5/\Gamma}^{\BB}$ and the toroidal boundary.
To do this, we first study the cohomology of the toroidal boundary.

\begin{prop}
\label{prop:coh_boundary}
All the odd degree cohomology of the boundary $T$ vanishes.
In even degrees, its Betti numbers are given as follows:
\begin{align*}
\renewcommand*{\arraystretch}{1.2}
\begin{array}{l|ccccc}
\hskip2cm j&0&2&4&6&8\\\hline
\dim H^j(T)&1&1&2&1&1\\
\end{array}
\end{align*}
\end{prop}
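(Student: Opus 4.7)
The plan is to exploit the explicit geometric description of $T$ given in Remark \ref{rem:structureT}, namely $T \cong S^2(\P(1,2,3))$, and to compute the rational cohomology of this symmetric product directly. The strategy reduces the problem to two independent calculations: (a) computing the rational cohomology of the weighted projective space $\P(1,2,3) \cong \P^2/\SS_4$, and (b) passing from $X = \P(1,2,3)$ to its symmetric square $S^2 X = (X \times X)/\SS_2$.

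For step (a), since $\P(1,2,3)$ has only finite quotient singularities (being a global quotient of $\P^2$ by $\SS_4$), rational cohomology behaves as for smooth varieties and one has $H^\ast(\P(1,2,3);\Q) = H^\ast(\P^2;\Q)^{\SS_4}$. Because $\SS_4$ acts on $\P^2$ via (a twist of) the standard representation, it fixes the hyperplane class, so the invariants are the full rational cohomology of $\P^2$. Hence $P_X(t) = 1 + t^2 + t^4$, concentrated in even degrees.

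For step (b), using again that the quotient is by a finite group we have
\begin{equation*}
H^\ast(T;\Q) \;=\; H^\ast(X\times X;\Q)^{\SS_2} \;=\; \bigl(H^\ast(X;\Q)^{\otimes 2}\bigr)^{\SS_2},
\end{equation*}
where the $\SS_2$ action is by swap of the two tensor factors. Since $H^\ast(X;\Q)$ lives entirely in even degrees, the Koszul sign rule contributes no sign, and the invariants are just the symmetric tensors. This gives the standard formula
\begin{equation*}
P_T(t) \;=\; \tfrac{1}{2}\bigl(P_X(t)^2 + P_X(t^2)\bigr).
\end{equation*}
Substituting $P_X(t) = 1 + t^2 + t^4$ yields
\begin{equation*}
P_T(t) \;=\; \tfrac{1}{2}\bigl((1+t^2+t^4)^2 + (1+t^4+t^8)\bigr) \;=\; 1 + t^2 + 2t^4 + t^6 + t^8,
\end{equation*}
which matches the table. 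In particular, all odd Betti numbers vanish.

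There is no real obstacle here; the main point to verify carefully is the identification $T \cong S^2(\P(1,2,3))$, which is already established in Remark \ref{rem:structureT}, together with the harmless fact that passing to a finite quotient commutes with rational cohomology via taking invariants. An alternative (but more laborious) route would be to use the $35$-sheeted cover $T_{\ord} \to T$ induced by $\Gamma/\Gamma_{\ord} \cong \SS_8$ and Lemma \ref{lem:boundary}(1), and to compute $H^\ast(T;\Q)$ as the $\Stab_{\SS_8}(T_{\ord,i})$-invariants of $H^\ast(\P^2 \times \P^2;\Q)$; the symmetric-square approach above is essentially a repackaging of this, but much shorter.
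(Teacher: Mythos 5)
Your proposal is correct and takes essentially the same approach as the paper: the paper likewise reduces the computation to $\bigl(\Q[x]/(x^3)\otimes\Q[y]/(y^3)\bigr)^{\SS_2}$ via the identification $T \cong (\P(1,2,3))^2/\SS_2$, and your symmetric-square Poincar\'e polynomial formula is just a compact repackaging of that same invariant computation.
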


\begin{proof}
This amounts to the computation of the invariant cohomology of the action of the stabilizer of a toroidal boundary component as in the proof of \cite[Proposition 7.13]{CMGHL23}.
More precisely, we have to determine the cohomology ring 
\[
 H^{\bullet}(\P^2 \times \P^2)^{(\SS_4\times\SS_4)\rtimes\SS_2} = H^{\bullet}((\P^2/\SS_4)^2, \Q)^{\SS_2} = H^{\bullet}((\P(1,2,3)^2, \Q)^{\SS_2} .
\]
Since $H^{\bullet}(\P^2/\SS_4) =   H^{\bullet}((\P(1,2,3))  \cong \Q[x]/(x^3)$, this is equivalent to compute the $\SS_2$-invariant parts of the tensor product $\Q[x]/(x^3)\otimes\Q[y]/(y^3)$.
Hence the invariant cohomology is given by 
\[P_t(T)=1+t^2+2t^4+t^6+t^8.\]
\end{proof}

We can now summarize the above computations in the 

\begin{thm}
\label{thm:coh_tor}
All the odd degree cohomology of $\overline{\B^5/\Gamma}^{\tor}$ vanishes.
In even degrees, the Betti numbers are given by the following table:
\begin{align*}
\renewcommand*{\arraystretch}{1.2}
\begin{array}{l|cccccc}
\hskip2cm j&0&2&4&6&8&10\\\hline
\dim H^j(\overline{\B^5/\Gamma}^{\tor})&1&2&3&3&2&1\\
\end{array}
\end{align*}
In particular, all the Betti numbers of $\M^{\K}$ and $\overline{\B^5/\Gamma}^{\tor}$ are the  same.
\end{thm}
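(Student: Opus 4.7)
The plan is to mimic the proof of Theorem \ref{thm:coh_ordered_tor}, namely to apply the decomposition theorem in the simple form \cite[Lemma 9.1]{GH17} to the toroidal-to-Baily-Borel morphism $\pi\colon\overline{\B^5/\Gamma}^{\tor}\to\overline{\B^5/\Gamma}^{\BB}$. The critical geometric input is that $\overline{\B^5/\Gamma}^{\BB}$ has a single isolated cusp $\xi$ and that the preimage $\pi^{-1}(\xi)=T$ is irreducible. The decomposition therefore contains only the $IC$-term together with a single skyscraper summand at $\xi$, producing an isomorphism
\[
H^k(\overline{\B^5/\Gamma}^{\tor}) \;\cong\; IH^k(\overline{\B^5/\Gamma}^{\BB}) \oplus V^k
\]
for a graded vector space $V^{\bullet}$ that encodes the difference between the full cohomology of the exceptional fiber $T$ and the local $IC$-stalk at $\xi$.

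Once this decomposition is in hand, the computation reduces to purely numerical bookkeeping. The intersection Betti numbers of $\overline{\B^5/\Gamma}^{\BB}$ are recorded in Theorem \ref{thm:coh_previous_work}, and the Betti numbers of $T$ are provided by Proposition \ref{prop:coh_boundary} via the description $T\cong S^2(\P(1,2,3))$ from Remark \ref{rem:structureT}. Using the standard formula for the $IC$-stalk of a variety with an isolated cone-like singularity, one obtains $\dim V^k=\dim H^k(T)$ for $k\ge 5$ and the remaining dimensions via Poincar\'e duality on the smooth-up-to-finite-quotients space $\overline{\B^5/\Gamma}^{\tor}$, namely $\dim V^k=\dim V^{10-k}$. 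Plugging in the numbers gives $\dim V^k=0,1,1,1,1,0$ for $k=0,2,4,6,8,10$, and summing with $1,1,2,2,1,1$ produces the desired sequence $1,2,3,3,2,1$. Vanishing of odd cohomology is automatic, since both summands vanish in odd degree.

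The step requiring most care is the verification that \cite[Lemma 9.1]{GH17} applies to our situation: one needs that $\overline{\B^5/\Gamma}^{\tor}$ has only finite quotient singularities, so that its rational cohomology coincides with its intersection cohomology, and that the stratification of $\overline{\B^5/\Gamma}^{\BB}$ has a unique positive-codimension closed stratum, the cusp $\xi$. Both are known from the general theory of toroidal compactifications of ball quotients together with the uniqueness of the cusp established in Section \ref{section:preparation}. The ordered case handled in Theorem \ref{thm:coh_ordered_tor} serves as a direct consistency check: summing the analogous local contributions coming from the $35$ copies of $\P^2\times\P^2$ onto $IH^k(\overline{\B^5/\Gamma_{\ord}}^{\BB})$ recovers exactly the sequence $1,43,99,99,43,1$ by the same mechanism, which makes the formula for $V^{\bullet}$ we employ here transparent and robust.
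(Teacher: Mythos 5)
Your proof is correct and follows exactly the same route as the paper: apply the decomposition theorem in the form of \cite[Lemma 9.1]{GH17} to $\pi\colon\overline{\B^5/\Gamma}^{\tor}\to\overline{\B^5/\Gamma}^{\BB}$, feed in the intersection Betti numbers from Theorem \ref{thm:coh_previous_work} and the fiber Betti numbers from Proposition \ref{prop:coh_boundary}, and determine the skyscraper summand by the high-degree comparison together with Poincar\'e duality. The paper states this in one line; you have simply unpacked the bookkeeping, and your consistency check against the ordered case confirms the arithmetic.
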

\begin{proof}
This follows now from an application of the decomposition theorem as stated in \cite[Lemma 9.1]{GH17}, applied to the last line in Theorem \ref{thm:coh_previous_work} and Proposition \ref{prop:coh_boundary}.  
\end{proof}

\section{Other cases of the Deligne-Mostow list}\label{sec:othercases}
Here we very briefly discuss some further cases of the Deligne-Mostow list where a similar analysis can be made.
More concretely, we consider $N$ points on $\P^1$ for $5\leq N \leq 12$ with symmetric weights; see \cite{DM86} or \cite[Appendix]{Th98}.
Note that the notions of stable and semi-stable coincide for odd $N$.
Remarkably, the beahviour which was observed for the moduli spaces of cubic surfaces and 8 points on $\P^1$, can also be found in other cases, thus pointing towards a 
much more general phenomenon. 

\subsection{5 points}
The moduli space of 5 points on $\P^1$ is associated with K3 surfaces with an automorphism of order 5 \cite{Ko07b}.
In this case, the Deligne-Mostow isomorphism gives 
\[\M_{\ord}^{\GIT}\cong\overline{\B^2/\Gamma_{\ord}}^{\BB}\]
for the discriminant kernel group $\Gamma_{\ord}$ \cite[Subsection 6.3, (6.5)]{Ko07b}.
Here, the weight in the sense of Deligne-Mostow is
\[\Bigl(\frac{2}{5},\frac{2}{5},\frac{2}{5},\frac{2}{5},\frac{2}{5}\Bigr).\]
This is the quintic del Pezzo surface \cite[Proposition 6.2 (2)]{Ko02}.
Now, $\B^2/\Gamma'$ is compact (\cite[Subsection 6.5]{Ko07b} or \cite[Appendix]{Th98}).
Hence, we have 
\[\M^{\K}=\M^{\GIT}\cong\overline{\B^2/\Gamma}^{\BB}=\overline{\B^2/\Gamma}^{\tor}\]
for the full modular unitary group $\Gamma$.

\subsection{7, 9, 10 or 11 points}
The moduli space of 7 points on $\P^1$ was studied in \cite{DvGK05}.
In this paper, we apply the theory of the moduli spaces of stable curves to analyse the geometry of our ball quotients.
In order to apply the work by Hassett, Kiem-Moon and others, the weights appearing in the Deligne-Mostow theory, that is the linearization of a line bundle, must be linearised as $\OO(1,\cdots,1)$; see \cite[Section 1]{KM11}.
Thus, in particular, the case of 7, 9, 10, and 11 points are out of scope in this paper.

\subsection{6 points and 12 points}

These are Eisenstein cases, which will be treated in upcoming work. 

\subsubsection{\rm{\textbf{6\ points}}}
The moduli space of 6 points on $\P^1$ is closely related to the theory of the Igusa quartic and the Segre cubic \cite{Ko13, Ko16, Ma01}.
It is known that the Segre cubic is realised as the Baily-Borel compactification of a 3-dimensional ball quotient. We recall the setting of \cite{Ko13}.
Let $\Lambda\defeq \Z[\omega]^{\oplus 4}$ be the Hermitian lattice over $\Z[\omega]$ of signature $(1,3)$ equipped with the Hermitian matrix $\diag(1,-1,-1,-1)$, where $\omega$ is a primitive third root of unity.
Let $\Gamma\defeq\U(\Lambda)(\Z)$ and 
\[\Gamma_{\ord}\defeq\{g\in\Gamma\mid g\vert_{\Lambda/\sqrt{-3}\Lambda}=\id\}.\]
The ball quotient $\overline{\B^3/\Gamma}^{\BB}$ (resp. $\overline{\B^3/\Gamma_{\ord}}^{\BB}$) is isomorphic to the moduli space of unordered (resp. ordered) 6 points on $\P^1$.
Here, $\B^3$ is the 3-dimensional complex ball.
The approach developed in the current paper can be fully carried over to this case. In particular, the analogues of Theorems \ref{mainthm:extendability} and    \ref{mainthm:not_K_equiv}
hold unchanged.

\subsubsection{\rm{\textbf{12 points}}}
The moduli space of unordered 12 points on $\P^1$ is known to be the moduli space of (non-hyperelliptic) curves of genus 4 \cite{Ko02}.
In particular, this moduli space is the $9$-dimensional ball quotient taken by the full unitary group for the Hermitian lattice with underlying integral lattice $U^{\oplus 2} \oplus E_8(-1)^{\oplus 2}$.
There is, however, an important difference here to the cases discussed previously: the arithmetic subgroup defining the moduli space of ordered 12 points on $\P^1$ is not known; see \cite[Remark 2.2]{HKM24}, although it is expected to be the discriminant kernel as in the case of 6 or 8 points.

In this case, there is the blow-up sequence 
\[\overline{\M}_{0,12}\to\overline{\M}_{0,12(\frac{1}{4}+\epsilon)}\to\overline{\M}_{0,12(\frac{1}{5}+\epsilon)}\to\overline{\M}_{0,12(\frac{1}{6}+\epsilon)}\cong\M_{\ord}^{\K}\stackrel{\varphi_1}{\to}\M_{\ord}^{\GIT}.\]
We have since analysed the case of 12 points in some detail, see \cite{HKM24}, showing  
that an analogue of Theorem \ref{mainthm:extendability} also holds in the 12 points case.
This further confirms preliminary computations due to Casalaina-Martin (private communication).

\end{document}